\numberwithin{equation}{section}
\newtheorem{Th}{Theorem}[section]
\newtheorem{Def}[Th]{Definition}
\newtheorem{Prop}[Th]{Proposition}
\newtheorem{Lem}[Th]{Lemma}
\newtheorem{Cor}[Th]{Corollary}
\newtheorem{Rk}[Th]{Remark}
\newtheorem*{theorem*}{Theorem}
\newcommand{\be}{\begin{equation}}
\newcommand{\ee}{\end{equation}}
\newcommand{\bE}{{\mathbb{E}}}
\newcommand{\cS}{{\mathcal{S}}}
\newcommand{\M}{{\sf M}}
\newcommand{\cP}{{\mathcal{P}}}
\newcommand{\Tr}{{\mathrm{Tr}}}
\newcommand{\bsig}{ {\bm{\sigma}}}
\newcommand{\btau}{{\bm{\tau}}}
\newcommand{\bnu}{{\bm{\nu}}}
\newcommand{\brho}{ {\bm{\hat\rho}} }
\newcommand{\bethat}{{\bm{\hat \eta}}}
\newcommand{\bhatmu}{{\bm{\hat \mu}}}
 \newcommand{\bS}{ { \bm{S} } }
\newcommand{\pC}{{p_C [\bsig, \btau,\ell ]}}
\newcommand{\id}{{\textrm{id}}}
\newcommand{\TitleGsig}{{\protect\boldmath${\sigma}$}}
\newcommand{\TitleGtau}{{\protect\boldmath${\tau}$$^{-1}$}}
\definecolor{armygreen}{rgb}{0.29, 0.33, 0.13}
\renewcommand\leq\leqslant
\renewcommand\geq\geqslant
\title{The tensor Harish-Chandra--Itzykson--Zuber integral I: Weingarten calculus and a generalization of monotone Hurwitz numbers}
\author{
Beno\^it Collins\thanks{Mathematics Department, Kyoto University, Kyoto, Japan.},
\; Razvan Gurau\thanks{Heidelberg University, Institut für Theoretische Physik, Philosophenweg 19, 69120 Heidelberg, Germany and CPHT, CNRS, Ecole Polytechnique, Institut Polytechnique de Paris, Route de Saclay, 91128 PALAISEAU, France},
\; Luca Lionni\thanks{IMAPP, Radboud University, Nijmegen, The Netherlands.}
}
\date{}
\begin{document}

\maketitle
\vspace{-0.2cm}
\begin{abstract}
We study a generalization of the Harish-Chandra--Itzykson--Zuber integral to tensors and its expansion over trace-invariants of the two external tensors. This gives rise to natural generalizations of monotone double Hurwitz numbers, which count certain families of constellations. We find an expression of these numbers in terms of monotone simple Hurwitz numbers, thereby also providing 
expressions for monotone double Hurwitz numbers of arbitrary genus in terms of the single ones. We give an interpretation of the different combinatorial quantities at play in terms of enumeration of nodal surfaces. In particular, our generalization of Hurwitz numbers is shown to enumerate certain isomorphism classes of branched coverings of a bouquet of $D$ 2-spheres that touch at one common non-branch node.
\end{abstract}

\tableofcontents

\section{Introduction}

\paragraph{The problem.} Our goal is to explore the following generalization of the HCIZ integral \cite{HarishChandra, Itzyk-Zub}:
\be 
\label{eq:HCIZTens1}
t\to I_{D,N}(t, A, B) =  \bE_U\Bigl[\exp \bigl(t \, \Tr (AUBU^* )\bigr)\Bigr]
\;. 
\ee
We will be particularly interested in the expansion of its logarithm, 
\[
{\cal C}_{D,N}(t, A, B)  = \log I_{D,N}(t, A, B),
\]
as a power series in $t$ and a Laurent series in $N$ and in its behavior at large $N$.
For $D=1$ we take $U$ a unitary $N\times N$ matrix $U \in U(N)$ ($U^*=U^{-1}$, where star denotes the adjoint), $A$ and $B$ self--adjoint $N\times N$ matrices and $\bE_U$ the expectation with respect to the Haar measure; that is for $D=1$ \eqref{eq:HCIZTens1} is the usual HCIZ integral \cite{HarishChandra, Itzyk-Zub}, here denoted by $I_{1,N}(t, A, B) $.

In this paper we are interested in the setup where $U$ is a tensor product of $D\ge 2$ unitary matrices $U=U^{(1)}\otimes\ldots \otimes U^{(D)}$ with $U^{(c)} \in U(N)$,
$\bE_U$ is the expectation with respect to the tensor product of $D$ Haar measures
and $A,B$ are self--adjoint $N^D\times N^D$ matrices called the external tensors.
In this case we will call \eqref{eq:HCIZTens1} the \emph{tensor HCIZ integral}. 
This is also written as:
\begin{equation}
\label{eq:HCIZTens2}
I_{D,N}(t, A, B) = e^{{\cal C}_{D,N}(t, A, B) } =  \int [dU] \; e^{t \Tr( A U BU^*) } \; .
\end{equation}

Let us note that if the tensors $A,B$ belong to the Lie algebra of $U(N)^{\otimes D}$ and are generic (i.e.~in the interior of a Weyl chamber), the integral $I_{D,N}(t, A, B)$
admits an exact determinantal formula as per Harish-Chandra's general results. In the particular case of $D=1$, this statement is important because 
it allows handling all multiplicity-free self-adjoint matrices $A,B$.
For $D\ge 2$ however, the Lie algebra is  smaller and the problem we are considering is much more general. 

\paragraph{Motivations.} 
The Kontsevich integral \cite{Kontsevich:1992ti} for a $N\times N$ self-adjoint matrix $X$:
\[
   \int dX \; e^{- \frac{1}{2}\Tr(XB X  ) + \imath  \frac{g}{3} \Tr(X^3) } \;, \qquad
   dX = \prod_{a,b}dX_{ab} \;,
\]
with $B$ a fixed $N\times N$ matrix is the prototype of a 
non-invariant probability distribution for a random matrix. 
This integral plays a crucial role in two dimensional quantum gravity \cite{DiFrancesco:1993cyw}. The Grosse--Wulkenhaar model \cite{Grosse:2003nw} is obtained by replacing the cubic potential with a quartic one and for a specific choice of $B$ this model is a $\phi^4$ quantum field theory on the non-commutative Moyal space \cite{Seiberg:1999vs} expressed in a matrix base \cite{Disertori:2006nq,Eichhorn:2013isa}. Such models can be generalized \cite{BenGeloun:2011rc,Carrozza:2013wda,Eichhorn:2018phj} to
rank $D$ complex tensors $T$ (with components denoted by 
$ T_{i^1\ldots i^D}$) transforming in the $D$--fundamental representation of the unitary group 
($T'_{i^1\dots i^D} = \sum_j  U^{(1)}_{i^1j^1} \ldots U^{(D)}_{i^D j^D} T_{j^1\ldots j^D}$). One is then interested in partition functions of the type:
\[
{\cal Z}  = \int dTd\bar T  \; e^{  -T B \bar T + V(T,\bar T)} \;, \qquad
T B \bar T =\sum_{i,j}\ T_{i^1\ldots i^D} B_{i^1\ldots i^D,j^1\ldots j^D }\overline{T_{j^1\ldots j^D}} \;,
\]
where $\bar T$ denotes the dual of $T$ (with complex conjugated entries $\overline{T_{i^1\ldots i^D}}$ and transforming in the conjugated representation) and $B$ some $N^D \times N^D$ matrix. The crucial point is that the perturbation $V(T,\bar T)$ is taken to be \emph{invariant} under the action of $U(N)^{\otimes D}$. 

A striking feature of both the Kontsevich integral and its generalizations involving random tensors is that one considers a non-invariant quadratic part and an invariant interaction. In order to study the interplay between these two, one can average over the unitary group:
\[
 {\cal Z}  = \int dTd\bar T  \; e^{ V(T,\bar T)} \int[dU] \; e^{  -T U B U^{\star}\bar T }  \; .
\]
The integral over $U$ is then just a particular case of \eqref{eq:HCIZTens2} for $A$ the tensor product of $T$ and its dual. 

More generally, the HCIZ integral is  extensively used in $D=1$ in random matrix models with non-invariant probability distributions, such as the two-matrix models \cite{Itzyk-Zub, Mehta, Eyn-two-mat},  and matrix models with an external source \cite{BR-Hik-1, BR-Hik-2, ZJ-ext-1, ZJ-ext-2, BR-Hik-3, Kuijlaars}, to cite just a few references. It is also central in studying the law of Gaussian Wishart matrices and non-centered Gaussian Wigner matrices \cite{Guionnet}. The study of the tensor HCIZ integral \eqref{eq:HCIZTens1} is justified by the generalization of these problems to random tensors. 
For example, it is natural in Quantum Information to study the sum of independent random tensors, so as soon as they have a $U(N)^{\otimes D}$-conjugation invariance, we expect that our results are a necessary preliminary towards to study of such asymptotic models. 

\

Another application of the HCIZ integral is as a generating function for the monotone double Hurwitz numbers \cite{Goulden1, Goulden2}. 
Hurwitz numbers count $n$-sheet coverings of the Riemann sphere by a Riemann surface of a certain fixed genus, where one branch point, for instance 0, is allowed arbitrary but fixed ramification, and all the other branch points are only allowed simple ramifications. Double Hurwitz numbers are such that not one but two points on the sphere, say 0 and $\infty$,  are allowed non-simple ramifications \cite{Goulden1, Goulden2, Goulden3, Goulden4}. 
Monotone single and double Hurwitz numbers are such that only a subset of the possible coverings are allowed. These numbers appear in $D=1$ as coefficients when expanding the HCIZ integral on the trace invariants of the two external matrices \cite{Goulden1, Goulden2}.

It is this last aspect on which the emphasis is put in the present paper: we expand the logarithm of the tensor HCIZ integral on the trace invariants of the two external tensors and study the $1/N$ expansion of the coefficients. This provides higher order generalizations of monotone double Hurwitz numbers. We provide a detailed study of the geometrical interpretation of these numbers and their relation to enumerations of branched coverings of a bouquet of $D$ spheres.

\paragraph{Final comments.}
Before proceeding let us comment some more on our model:
\begin{description}
\item{\emph{Different $N$'s.}} The generalization to the case of different dimensions $U^{(c)} \in U(N_c) $ is straightforward.
\item{ \emph{The D=1 case.}} 
For $D=1$ we get the HCIZ integral which is a Fourier transform 
of the $U(N)$-invariant probability measure concentrated on the orbit of the matrix $B$.
In general,
the same holds true, but for 
the 
smaller symmetry group $U(N)^{\otimes D}$ instead of $U(N^D)$. 
%
%
\item{\emph{Variants}.} Other models might be relevant, such as:
\[ t\to \log \bE_U\bigl[\exp (t \mathrm{Re} (\bar B UA )\bigr]\;, \] 
where $A,B$ are tensors. For $D=1$ this model boils down to the 
Br\'ezin-Gross-Witten (BGW) integral
\[ t\to \log \bE_{U}\bigl[\exp \bigl(t \Tr (AU+U^*B^* )\bigr)\bigr] \;. \]
which was largely studied in the literature.
For an optimal analytic result both for HCIZ and BGW in the
$D=1$ setup, we refer to \cite{Novak2006.04304}.
It turns out that the combinatorics are a bit different and slightly more involved, so we will move to this model in subsequent work. 
\item{\emph{Other groups.}} Similar results can be derived for the orthogonal or symplectic group. The initial theory does not change substantially, but the graphical interpretation does. We also keep this for future work. 
\end{description}

\paragraph{Plan of the paper.} 
The notations and prerequisite on Weingarten calculus, constellations and cumulants are gathered in Sec.~\ref{sec:prereq}, where the reader will also find, in 
Prop.~\ref{prop:moments-exp}, an expression for the moments of the tensor HCIZ integral which follows directly from the definitions.

\

The study of the cumulants of the tensor HCIZ integral is more involved. They write in terms of a \emph{cumulant Weingarten functions}, defined in Sec.~\ref{sub:exp-cumul-inv} and expressed in Sec.~\ref{sum:cum-wein-exact-1N} as series in powers of $1/N$  whose coefficients $p_C$ enumerate certain transitive factorizations of $D$-uplets of permutations. The rest of the paper is dedicated to the study and interpretation of the coefficients $p_C$.

\

Sec.~\ref{sub:asympt-cum-weing} contains our main theorem, Thm.~\ref{thm:asympt-cum-weing}. This theorem expresses
the coefficients $p_C$ as sums over partitions satisfying certain conditions. In this form we are able to compute $p_C$ at leading order in $1/N$, that is we identify the smallest exponent of $1/N$ with non vanishing contribution to $p_C$ and compute this contribution. 


\

In $D=1$, the coefficients $p_C$ are related to  \emph{monotone double Hurwitz numbers}, as detailed in Sec.~\ref{sub:Hurwitz}. These numbers are known to count certain isomorphism classes of connected branched coverings of the Riemann sphere. For $D>1$, the coefficients $p_C$  lead to a generalization of monotone double Hurwitz numbers, and one may wonder whether these numbers have a natural interpretation as enumerating certain branched coverings. 

\

This question is addressed in Sec.~\ref{sec:nodal}. After introducing nodal surfaces in Sec.~\ref{sub:def-nodal} we shown in Sec.~\ref{sub:Coverings} that the generalized Hurwitz numbers of Sec.~\ref{sub:Hurwitz} enumerates certain \emph{connected branched coverings of $D$ 2-spheres that ``touch'' at one common node (a bouquet of $D$ 2-spheres)}.  This provides (see Sec.~\ref{sub:Geom-descr-pC}) a geometric interpretation for the combinatorial formulas of Thm.~\ref{thm:asympt-cum-weing} 
and recasts the sums over partitions defining $p_C$ as a sum over certain nodal surfaces whose nodes are weighted with monotone single Hurwitz numbers. 

\newpage

\section{Prerequisite and direct results}
\label{sec:prereq}

\subsection{Notations}

Indices ranging from $1$ to $N$ will be denoted by $a_1,a_2,b_1,b_2$  and so on.
Let $S_n$ be the group of permutations of $n$ elements and $S_n^*$ the set of permutations different from the identity, $S_n^* = S_n\setminus \{\id\}$. For $\sigma\in S_n$,   $\#(\sigma)$ denotes the number of disjoint cycles of $\sigma$  and $\lVert \sigma \rVert$  the number of transpositions of $\sigma$ (i.e.~the minimal number of transpositions required to obtain $\sigma$)\footnote{The common notation would be $\lvert \sigma\rvert$, however we choose this notation instead to avoid confusion with the number of blocks of a partition.}.  These quantities satisfy the identity:
\be 
\label{eq:TransCyc}
\#(\sigma) + \lVert \sigma \rVert = n \;.
\ee
We denote by $(\rho_1,\dots, \rho_k) = (\rho_i)_{1\le i\le k }, \, \rho_i \in S_n$ or sometimes $\hat \rho$ an ordered sequence of $k$ permutations, that is a \emph{constellation} (see Sec.~\ref{sec:const}).

\

In this paper, we will deal with indices,  permutations, and sequences of permutations bearing a \emph{color} $c\in\{1,\dots, D\}$. The color is indicated in superscript or subscript: $a^c_1$ are indices, $\sigma_c$ are permutations, and so on. $D$-uplets will be written in bold, for instance $ \bsig = (\sigma_1,\ldots ,\sigma_D), \,\sigma_c \in S_n$ is a $D$-uple of permutations ($\bS_n$ denotes the set of $ (n!)^D$ such $D$--uplets), and $\brho$ is a $D$-uple of constellations. For $\bsig,\btau\in \bS_n$, we denote by $\bnu = \bsig\btau^{-1}$ the $D$-uple of permutations $\bnu = (\sigma_1 \tau_1^{-1}, \dots,  \sigma_D \tau_D^{-1})$.

\

We denote by $\pi,\pi',\pi_1,\pi_2$ and so on partitions of the set $\{1,\ldots , n\}$
and $\cP(n)$ the set of all such partitions. The notation $|\pi|$ is used for the number of blocks of $\pi$, while  $B\in \pi$ denotes the blocks, and $|B|$ the cardinal of the block $B$. $\le$ signifies the refinement partial 
order: $\pi'\le \pi$ if all the blocks of $\pi'$ are subsets of the blocks of $\pi$. Furthermore,  $\vee$ denotes the joining of partitions: $\pi\vee\pi'$ is the finest partition which is coarser than both $\pi$ and $\pi'$.
Let  $1_n$ be the one-block partition of $\{1,\dots, n\}$.

\

The partition induced by the transitivity classes of the permutation $\nu$ (i.e. the disjoint cycles of $\nu$) is denoted by $\Pi(\nu)$, hence $ |\Pi(\nu)| = \#(\nu)$. $d_p(\nu)$ denotes the number of cycles of $\nu$ with $p$ elements ($d_1(\nu)$ is the number of fixed points of $\nu$) and we have: 
\[ \sum_{p\ge 1} d_p(\nu) = \#(\nu) = |\Pi(\nu)| = n- \lVert \nu\rVert\; .\]

 Note that if $\pi \ge \Pi(\nu)$, then $\nu$ stabilizes the blocks of $\pi$, that is $\nu(B) = B$ for all $B\in \pi$.

\

Finally, $\Pi(\bsig,\btau)$ denotes the partition induced by the transitivity classes of the group generated by $\{\sigma_c, \tau_c \ |\ c\in\{1,\dots, D\}\} $, that is, $\Pi(\bsig,\btau) = \bigvee_{c=1}^D 
\big(\Pi(\sigma_c) \vee \Pi(\tau_c)\big)$ and $| \Pi(\bsig,\btau) |$ is its number of blocks. Note that all the permutations in $\bsig,\btau$ stabilize the blocks of some partition $\pi$ if and only if 
$ \Pi(\bsig,\btau) \le \pi$. 

\subsection{Trace invariants} 
\label{sub:trace-invariants}

We are interested in the invariants that can be built starting from  a $N^D \times N^D$ matrix  
$A$. We define the \emph{trace invariant} associated to $\bsig \in \bS_n$ as:
\[ \Tr_{\bsig}(A)=\sum_{\rm{all\ indices}} 
 \left( \prod_{s=1}^n A_{b^1_s\ldots b^D_s, a^1_s\ldots a^D_s } \right) 
\prod_{   c\in\{1,\ldots, D\}  } \left( \prod_{s=1}^n \delta_{a^c_s,b^c_{\sigma_c(s)}}  \right) \; . \]
These quantities are obviously invariant under conjugation by $U(N)^{\otimes D}$, that is $A\to UAU^{\star}$ with $U = U^{(1)} \otimes \ldots \otimes U^{(D)} , \, U^{(c)} \in U(N)$. 
For example:
\begin{itemize}
    \item[-] for $D=1$ any $\Tr_{\sigma}(A)$ is a product of traces of powers of $A$, 
    and the powers are the lengths of the cycles of $\sigma$:
    \be
     \Tr_{\sigma}(A) = \prod_{p\ge 1 } \big[ \Tr(A^{p} ) \big]^{d_p(\sigma)}  \;.
    \ee
    For $n=5, D=1$ and $\sigma = (123)(45)$ we get $\Tr_{\sigma}(A)= \Tr (A^2)  \; \Tr (A^3)$ .

\item[-] if all the $\sigma$'s are equal, $\sigma_c=\sigma$, then $\Tr_{\bsig}(A)$ is  again a product of traces of powers of $A$, but this time the traces are over indices of size $N^D$. Taking as before $\sigma = (123)(45)$ ($n=5$ and $D$ arbitrary) we get $\Tr_{(\sigma,\ldots , \sigma)}(A)= \Tr (A^2 ) \; \Tr (A^3) $.

\item[-] we finish by an example with different $\sigma$'s. For $n=2$, $D=2$, $\sigma_1=(12)$, and $\sigma_2=(1)(2)$:
$$\Tr_{(\sigma_1,\sigma_2)}(A)= \Tr_1 [ \Tr_2( A ) \Tr_2( A)] \;, $$
where $\Tr_{c}$ denotes the partial trace on the index of color $c$.
\end{itemize}

\subsection{Weingarten calculus} 

Weingarten calculus \cite{Weingarten} allows one to integrate any polynomial function on the unitary group.  
There exists a function $W^{(N)}: S_n\to \mathbb{R}$ such that, denoting $dU$ the Haar measure on $U(N)$, we have \cite{Collins03}:
\be
\label{eq:WeinDef}
\int dU \;   U_{i_1a_1}\cdots U_{i_na_n}\overline{U_{j_1b_1}\cdots U_{j_nb_n}}=
\sum_{\sigma,\tau\in S_n}
 \left(  \prod_{s=1}^n \delta_{i_s , j_{\sigma(s)}}  \right)
  \left( \prod_{s=1}^n \delta_{a_s , b_{\tau(s)}} \right)  \, W^{(N)} (\sigma\tau^{-1})  \; .
\ee
The function $W^{(N)}$ is uniquely defined if and only if $n\le N$, and
it follows from obvious commutativity relations that 
$W^{(N)}(\sigma\tau^{-1})$ depends only on the conjugacy class of $\sigma \tau^{-1}$, that is 
$W^{(N)}$ is a central function on the symmetric group $S_n$. The functions $ W^{(N)}$ are called Weingarten functions.

\paragraph{The $1/N$ expansion of the Weingarten functions.}
We start with a theorem that characterizes and defines the Weingarten functions.
Multiplying \eqref{eq:WeinDef} by $\prod_{s=1}^n\delta_{a_s,b_s} $ and summing the repeated indices, we get:
\begin{Th} [Collins-\'Sniady \cite{ColSni}]
The Weingarten function $\nu\to W^{(N)}(\nu)$ and the function
$ \nu \to \Phi(\nu) = N^{\# (\nu)} = N^{n} \delta_{\id; \nu}  + N^{n}  \sum_{\rho\neq \id} N^{-\lVert\rho\rVert}  \delta_{\rho;\nu} $ are pseudo-inverses for the convolution.
In particular, one has for $N\ge n$:
\[
\sum_{\nu \in S_n}  W^{(N)}(\nu) \, \Phi(\nu^{-1}\sigma)=\delta_{\sigma ; \id} \;.
\]
\end{Th}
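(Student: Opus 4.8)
The plan is to obtain the stated convolution identity directly from the defining Weingarten formula \eqref{eq:WeinDef}, exactly as the preamble indicates, and then to read the ``pseudo-inverse'' assertion off as its algebraic content. First I would multiply both sides of \eqref{eq:WeinDef} by $\prod_{s=1}^n \delta_{a_s,b_s}$ and sum over all the indices $a_s,b_s$; concretely this sets $b_s=a_s$ and then sums over $a_1,\dots,a_n$.

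On the left-hand side the integrand factorizes over $s$, and for each $s$ the unitarity of $U$ gives $\sum_{a_s} U_{i_s a_s}\overline{U_{j_s a_s}} = (UU^*)_{i_s j_s} = \delta_{i_s j_s}$; since the Haar measure is normalized, integrating the resulting constant leaves just $\prod_{s=1}^n \delta_{i_s j_s}$. On the right-hand side the index sum only reaches the factor $\prod_{s=1}^n \delta_{a_s, a_{\tau(s)}}$, which forces the map $s\mapsto a_s$ to be constant along the cycles of $\tau$; hence $\sum_{a} \prod_s \delta_{a_s,a_{\tau(s)}} = N^{\#(\tau)} = \Phi(\tau)$. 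Equating the two sides yields
\[
\prod_{s=1}^n \delta_{i_s j_s} = \sum_{\sigma\in S_n} \Bigl(\prod_{s=1}^n \delta_{i_s j_{\sigma(s)}}\Bigr) \Bigl[ \sum_{\tau\in S_n} \Phi(\tau)\, W^{(N)}(\sigma\tau^{-1}) \Bigr].
\]

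To separate the $\sigma$-contributions I would use that, for $N\ge n$, the functions $(i,j)\mapsto \prod_s \delta_{i_s j_{\sigma(s)}}$ indexed by $\sigma\in S_n$ are linearly independent (take the $i_s$ pairwise distinct, which is possible precisely when $N\ge n$). Since the left-hand side is the $\sigma=\id$ term, matching coefficients gives $\sum_\tau \Phi(\tau)\, W^{(N)}(\sigma\tau^{-1}) = \delta_{\sigma;\id}$; reindexing by $\nu = \sigma\tau^{-1}$, i.e. $\tau=\nu^{-1}\sigma$ (as $\tau$ runs over $S_n$ so does $\nu$), turns this into $\sum_{\nu\in S_n} W^{(N)}(\nu)\, \Phi(\nu^{-1}\sigma) = \delta_{\sigma;\id}$. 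This is the asserted relation, and it says precisely that $\Phi$ and $W^{(N)}$ are mutual inverses under the convolution product $(f\ast g)(\sigma)=\sum_\nu f(\nu)g(\nu^{-1}\sigma)$ on the group algebra.

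The clean part is this $N\ge n$ regime, where the operator ``convolution by $\Phi$'' is invertible. The main obstacle is the qualifier \emph{pseudo}-inverse, needed when $n>N$: there the $\delta$-functions above are no longer independent, convolution by $\Phi$ is singular, and $W^{(N)}$ can only be defined as a generalized inverse. To handle this I would diagonalize convolution by the central function $\Phi(\sigma)=N^{\#(\sigma)}$ via Schur--Weyl duality: on the $\lambda$-isotypic component its eigenvalue is proportional to $s_\lambda(1^N)$, the dimension of the associated $GL_N$-module, which vanishes exactly when the partition $\lambda$ has more than $N$ rows. One then defines $W^{(N)}$ by inverting $\Phi$ on the complementary (nonzero) part of the spectrum and checks that this Moore--Penrose inverse still reproduces the integration functional \eqref{eq:WeinDef}. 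This spectral bookkeeping, rather than the index manipulation, is where the genuine content lies.
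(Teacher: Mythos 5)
Your argument is correct and follows exactly the route the paper indicates (multiplying \eqref{eq:WeinDef} by $\prod_s\delta_{a_s,b_s}$, summing, using unitarity on the left and the cycle count $N^{\#(\tau)}$ on the right, then separating the $\sigma$-terms by linear independence of the Kronecker products for $N\ge n$); the paper itself only sketches this in one sentence and defers the rest to Collins--\'Sniady. Your additional Schur--Weyl discussion of the genuine pseudo-inverse statement for $n>N$ is also the standard and correct way to complete it.
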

This theorem can be used to compute the Weingarten functions:
\begin{equation}
\label{expansion-wg-permutations}
W^{(N)}(\nu) = N^{-n}\sum_{k\ge 0}\;\sum_{\substack{{\rho_1,\ldots , \rho_k\,\in\, S_n^*,}\\{ 
 \rho_1\cdots \rho_k =\nu}}}\;(-1)^k\,N^{-\sum_{i=1}^k\lVert\rho_i\rVert } \; ,
\end{equation}
with the convention that empty products are $1$ and empty sums are $0$. The 
case $k=0$ writes $\nu$ as an empty product in $S_n$, hence forces $\nu = \id$ and the empty sum 
$\sum_{i= 1}^k \lVert\rho_i\rVert$ is zero. This expansion is convergent for $N\ge n$.

The coefficient of $N^{-n-l}$ in the $1/N$ expansion of $W^{(N)}(\nu)$ is identified as \cite{Collins03}:
$$
W^{(N)}(\nu)= N^{-n}  \sum_{l \ge 0} (-1)^l  \, p(\nu ; l) \, N^{-l}   \;, \qquad 
 (-1)^l p(\nu ; l) =  \sum_{k\ge 0}\;(-1)^k\,m( \nu ; l, k), 
$$
where:
\be
\label{eq:def-weing-permutations}
m(\nu ; l, k) = \textrm{Card}\bigg\{\
 (\rho_i)_{1\le i\le k}  \; \bigg| \; \rho_i \in S_n^* \; , \text{ with } 
 \;
 \rho_1\cdots\rho_k =\nu \textrm{ and } \sum_{i=1}^k \lVert\rho_i\rVert = l\,\bigg\} \; ,
\ee
and for $l=0$ or 1, we have respectively $m(\nu;0,k) = \delta_{\nu,\id}\delta_{k,0}  $ and $m(\nu,1,k)= \delta_{k,1} \delta_{\lVert \nu \rVert ,1}$, and for $k=0$ or 1, we have
respectively $m( \nu ; l, 0)=  \delta_{\nu,\id}\delta_{l,0}$ and   $m(\nu,l,1) = \delta_{l, \lVert\nu\rVert} (1 - \delta_{\nu,\id})$. We conclude that $p(\nu; 0) = \delta_{\nu,\id}$ and $p(\nu,1) =  \delta_{\lVert \nu \rVert ,1} $. 

 This expression for the coefficient at order $N^{-n-l}$ as an alternating sum does not render explicit its sign. Another expression  \cite{collins-matsumoto} (see also \cite{matsumoto-novak}) solves this issue.
\begin{Def}
\label{def:weakly-monotone}
Let $(pq)$ be the elementary transposition of $p$ and $q$  (that is we use a cycle notation, but we omit the cycles with $1$ element).
An ordered $l$-tuple of transpositions  $\mu_1 = (p_1q_1) , \ldots, \mu_l =  (p_l q_l)$ is said to have  
\emph{weakly monotone maxima} if  $p_k<q_k$ for each $k\in \{1,\ldots, l\}$
and $q_k\le q_{k+1}$
for each $k\in \{1,\ldots, l-1\}$.
\end{Def}
\begin{Th} [\cite{collins-matsumoto,matsumoto-novak}]
We denote by $P(\nu ; l)$ the set of solutions of
$\nu = \mu_1  \ldots \mu_l$ with $\mu_1,\dots, \mu_l$ elementary transpositions with weakly monotone maxima. Then $p(\nu ; l)= | P(\nu ; l)|$.  
\end{Th}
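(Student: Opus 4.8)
The plan is to pass to the group algebra $\mathbb{C}[S_n]$ and to diagonalize the relevant central element using the Jucys--Murphy elements
\[
J_k = \sum_{i=1}^{k-1} (ik) \in \mathbb{C}[S_n], \qquad k=1,\dots,n,
\]
with $J_1 = 0$. The first step is to recall the classical factorization identity
\[
\prod_{k=1}^n (x + J_k) = \sum_{\sigma \in S_n} x^{\#(\sigma)}\, \sigma,
\]
which I would prove by induction on $n$: viewing $S_{n-1}\subset S_n$ as the stabilizer of $n$, each $\pi$ with $\pi(n)=n$ comes from the $x\cdot\sigma$ term, while each $\pi$ with $\pi(n)\neq n$ is uniquely of the form $\sigma\,(in)$ with $\sigma\in S_{n-1}$ and $i=\pi^{-1}(n)$; here the transposition $(in)$ merges the fixed point $n$ with the cycle of $i$, lowering the cycle count by one, which accounts for the matching power of $x$. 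Specialized at $x=N$ the right-hand side is exactly the element $\Phi = \sum_\sigma N^{\#(\sigma)}\sigma$ appearing in the Collins--\'Sniady theorem above.

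Next I would invoke that theorem: since $\Phi$ and $W^{(N)}$ are convolution inverses, in $\mathbb{C}[S_n]$ one has $\sum_\nu W^{(N)}(\nu)\,\nu = \Phi^{-1} = \bigl(\prod_{k=1}^n (N+J_k)\bigr)^{-1}$ for $N\ge n$. The Jucys--Murphy elements commute pairwise, so the inverse of the product equals the product of the inverses in any order, and I may expand each factor as a geometric series
\[
(N+J_k)^{-1} = \sum_{m\ge 0} (-1)^m\, N^{-m-1}\, J_k^{\,m},
\]
which converges for $N\ge n$ because the eigenvalues of $J_k$ are integer contents bounded below by $-(n-1)$. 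Keeping the factors in the order $k=1,\dots,n$ and collecting powers of $N$ yields
\[
\sum_\nu W^{(N)}(\nu)\,\nu = N^{-n}\sum_{l\ge 0}(-1)^l N^{-l}\!\!\sum_{m_1+\cdots+m_n=l} J_1^{\,m_1}\cdots J_n^{\,m_n}.
\]

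The heart of the argument is the identification of the inner sum with monotone factorizations. Since $J_k$ is a sum of transpositions $(ik)$ with $i<k$, all having maximum equal to $k$, expanding $J_1^{m_1}\cdots J_n^{m_n}$ produces products $\mu_1\cdots\mu_l$ of elementary transpositions in which the $m_k$ factors issued from $J_k^{\,m_k}$ are contiguous and each satisfies $p_j<q_j$ with $q_j=k$. Because the blocks are arranged by increasing $k$, the sequence of maxima is weakly increasing, $q_1\le\cdots\le q_l$, which is precisely the weakly monotone maxima condition of Definition~\ref{def:weakly-monotone}; conversely each such monotone sequence arises from a unique tuple $(m_1,\dots,m_n)$ with $\sum_k m_k=l$ (necessarily $m_1=0$ as $J_1=0$). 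Hence the coefficient of $\nu$ in $\sum_{\sum m_k=l} J_1^{m_1}\cdots J_n^{m_n}$ is exactly $|P(\nu;l)|$, and comparison with the stated expansion $W^{(N)}(\nu)=N^{-n}\sum_l(-1)^l p(\nu;l)\,N^{-l}$ gives $p(\nu;l)=|P(\nu;l)|$.

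The main obstacle I anticipate lies in this last combinatorial step: one must verify carefully that grouping transpositions by their maximum and ordering the groups reproduces Definition~\ref{def:weakly-monotone} verbatim, including the strict inequality $p_j<q_j$ within each group and the absence of any additional constraint among transpositions sharing the same maximum. Establishing the factorization identity for $\prod_k(N+J_k)$ and justifying the term-by-term inversion (commutativity of the $J_k$ and convergence for $N\ge n$) are routine, but should be stated explicitly so that the passage from the alternating sum defining $p(\nu;l)$ to the manifestly positive count $|P(\nu;l)|$ is fully transparent.
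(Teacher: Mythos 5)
Your proof is correct and is precisely the Jucys--Murphy argument of the cited references \cite{collins-matsumoto,matsumoto-novak}; the paper itself states this theorem without proof, so there is nothing different to compare against. The only nitpick is that convergence of the geometric series for $(N+J_k)^{-1}$ requires the eigenvalues of $J_k$ (the contents) to be bounded in \emph{absolute value} by $n-1<N$, not merely bounded below; with that phrasing fixed, every step, including the identification of the expansion of $J_1^{m_1}\cdots J_n^{m_n}$ with weakly monotone factorizations, is sound.
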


\paragraph{Asymptotics of the Weingarten functions.}
  Classical theorems in combinatorics allow one to obtain the asymptotics of the Weingarten functions (Theorem 2.15 point $(ii)$ in \cite{Collins03}).

\begin{Cor}\label{cor:asympt}
For $\nu \in S_n$, we have the  asymptotic expansion:
\begin{equation} 
\label{eq:unitary-Wg-expansion2}
W^{(N)}(\nu)= N^{-n - \lVert\nu\rVert }  \, \M(\nu)  \, \big(1+O(N^{-2}) \big) \;,
\end{equation}
where $\M(\nu)$ is Biane-Speicher's M\"oebius function on the lattice of non-crossing partitions (\cite{NicaSpeicher}, Lecture 10) which is a central function which can be written in terms of the Catalan numbers:
\be
\label{eq:Moebius-on-NC}
\M(\nu) = \prod_{p\ge 1 }  \left[  \frac{ (-1)^{p-1} }{p} 
    \binom{ 2 p-2 }{ p-1  }  \right]^{d_p(\nu)} \; .
\ee
\end{Cor}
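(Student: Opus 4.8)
The plan is to extract the leading behaviour directly from the convergent series $W^{(N)}(\nu)=N^{-n}\sum_{l\ge0}(-1)^l p(\nu;l)N^{-l}$ established in \eqref{expansion-wg-permutations}, using the interpretation $p(\nu;l)=|P(\nu;l)|$ from the theorem above. First I would record two elementary properties of $P(\nu;l)$. Taking signatures in $\nu=\mu_1\cdots\mu_l$ gives $(-1)^l=(-1)^{\lVert\nu\rVert}$, so $p(\nu;l)=0$ unless $l\equiv\lVert\nu\rVert\pmod 2$; and because $\lVert\cdot\rVert$ is exactly the transposition word length, $p(\nu;l)=0$ for $l<\lVert\nu\rVert$ while $p(\nu;\lVert\nu\rVert)\ge 1$. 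Feeding these vanishings into the series makes every odd-order correction disappear and yields
\be
W^{(N)}(\nu)=N^{-n-\lVert\nu\rVert}\,(-1)^{\lVert\nu\rVert}\,p(\nu;\lVert\nu\rVert)\,\bigl(1+O(N^{-2})\bigr)\;.
\ee
It then remains only to identify the leading coefficient $(-1)^{\lVert\nu\rVert}p(\nu;\lVert\nu\rVert)$ with $\M(\nu)$.

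For this I would first show that $p(\nu;\lVert\nu\rVert)$ is multiplicative over the cycles of $\nu$. Minimal factorizations are geodesic: writing $\sigma_j=\mu_1\cdots\mu_j$, each step changes $\lVert\cdot\rVert$ by $\pm1$, and since the total must rise from $0$ to $\lVert\nu\rVert$ in $\lVert\nu\rVert$ steps, every step is $+1$. Viewing each transposition as an edge on $\{1,\dots,n\}$, the edges therefore form a forest whose connected components are precisely the supports of the cycles of $\nu$; in particular every $\mu_j$ is supported inside a single cycle. Restricting the sequence to one cycle support yields a minimal monotone factorization of that cycle, and conversely minimal monotone factorizations of the individual cycles merge into one for $\nu$ in a unique way: maxima attached to distinct (disjoint) supports are distinct numbers and transpositions on disjoint supports commute, so the weakly monotone order forces a unique shuffle. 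This establishes a bijection $P(\nu;\lVert\nu\rVert)\cong\prod_{\gamma}P(\gamma;\lVert\gamma\rVert)$, the product running over the cycles $\gamma$ of $\nu$.

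The main obstacle is the single-cycle evaluation: that a $p$-cycle admits exactly $C_{p-1}=\frac{1}{p}\binom{2p-2}{p-1}$ minimal monotone factorizations. This is a classical Catalan enumeration, which I would either import from the genus-zero specialization of monotone Hurwitz numbers, or derive by induction, using that $p(\cdot;\lVert\cdot\rVert)$ is central (inherited from the centrality of $W^{(N)}$) to reduce to the standard representative $(1\,2\,\cdots\,p)$ and then isolating the transpositions involving the largest point $p$, which monotonicity forces to the end of the word. Granting the count, the single-cycle leading coefficient is $(-1)^{p-1}C_{p-1}$, and by multiplicativity the leading coefficient of $W^{(N)}(\nu)$ is $\prod_{p\ge1}\bigl[(-1)^{p-1}C_{p-1}\bigr]^{d_p(\nu)}$, which is exactly the product formula \eqref{eq:Moebius-on-NC} for $\M(\nu)$; the sign matches automatically because $\sum_{p\ge1}(p-1)d_p(\nu)=\lVert\nu\rVert$. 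This completes the identification, and hence the corollary.
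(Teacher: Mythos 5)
Your argument is correct, but it is worth noting that the paper itself offers no proof of this corollary: it simply invokes ``classical theorems in combinatorics'' and cites Theorem 2.15(ii) of \cite{Collins03}. Your route is the unsigned one, through the monotone-factorization model $p(\nu;l)=|P(\nu;l)|$ of \cite{collins-matsumoto,matsumoto-novak}: the parity constraint $(-1)^l=(-1)^{\lVert\nu\rVert}$ kills the odd corrections (this is exactly where the $O(N^{-2})$ rather than $O(N^{-1})$ comes from, and the tail estimate is justified by the convergence of \eqref{expansion-wg-permutations} for $N\ge n$), the forest/geodesic argument gives multiplicativity of $p(\nu;\lVert\nu\rVert)$ over cycles, and the single $p$-cycle contributes the Catalan number $C_{p-1}$. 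The derivation in \cite{Collins03} that the paper points to instead goes through the signed sum over proper constellations: the leading coefficient is identified with the alternating sum $\gamma(\nu)$ of \eqref{eq:BMSCH-alt-sum}, evaluated in closed form via the Bousquet-M\'elou--Schaeffer enumeration of planar constellations, from which $\M(\nu)$ drops out. The two approaches are genuinely different in flavor: the constellation route gives a closed formula for the full genus-zero count in one stroke but requires the nontrivial input \eqref{eq:BMSCH-mom-proper}, whereas your route is more elementary and manifestly positive (up to the global sign), at the price of deferring the single-cycle Catalan enumeration, which you correctly flag as the remaining classical ingredient (it is equivalent to the $\#(\alpha)=1$ case of \eqref{eq:mono-hur-gen-0}). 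Your shuffle-uniqueness argument for merging the cycle factorizations is sound: maxima coming from disjoint supports are distinct integers, ties only occur within a single cycle where the relative order is already prescribed, and disjointly supported transpositions commute, so the interleaving is forced. Modulo supplying the Catalan count, the proof is complete.
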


\subsection{Moments of the tensor HCIZ integral}
\label{sec:moments}
The moments of the tensor HCIZ integral \eqref{eq:HCIZTens1} write in terms of the Weingarten functions. 

\begin{Prop}
\label{prop:moments-exp} 
The moments of the tensor HCIZ integral \eqref{eq:HCIZTens1} are:
\begin{align}
\label{eq:moments-exp}
 \bE_U\bigg( [\Tr (AUBU^* )]^n\bigg) &=\int[dU] \; \big[\Tr (AUBU^* ) \big]^n  \crcr
& = \sum_{ \bsig,\btau \in \bS_n} 
\Tr_{\bsig}(A) \,
\Tr_{\btau^{-1}}(B)\, \prod_{c=1}^DW^{(N)}(\sigma_c\tau_c^{-1}) \; ,
\end{align}
where $\btau^{-1} = (\tau_1^{-1}, \ldots, \tau_D^{-1})$ and $W^{(N)}(\cdot )$ are the Weingarten functions.
\end{Prop}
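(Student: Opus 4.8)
The plan is to expand the $n$-th power into a product of $n$ copies of the trace, integrate term by term by exploiting the tensor-product structure $U=U^{(1)}\otimes\cdots\otimes U^{(D)}$, and then recognize the two surviving index contractions as the trace invariants $\Tr_{\bsig}(A)$ and $\Tr_{\btau^{-1}}(B)$.

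First I would write $\big[\Tr(AUBU^*)\big]^n=\prod_{s=1}^n \Tr(AUBU^*)$, giving each factor its own family of matrix multi-indices labelled by $s\in\{1,\dots,n\}$. Writing a single trace in components as $\sum A_{b_s,a_s}\,U_{a_s,\gamma_s}\,B_{\gamma_s,\delta_s}\,\overline{U_{b_s,\delta_s}}$ with multi-indices $a_s=(a^1_s,\dots,a^D_s)$ and so on, and then factoring each entry of $U$ and of $U^*=(U^{(1)})^*\otimes\cdots\otimes(U^{(D)})^*$ over colours, the integrand becomes the scalar factor $\prod_{s=1}^n A_{b_s,a_s}B_{\gamma_s,\delta_s}$ times the unitary factor $\prod_{c=1}^D\prod_{s=1}^n U^{(c)}_{a^c_s\,\gamma^c_s}\,\overline{U^{(c)}_{b^c_s\,\delta^c_s}}$. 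Here the first indices of $U^{(c)}$ and $\overline{U^{(c)}}$ are contracted with $A$, while their second indices are contracted with $B$.

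Next I would take the expectation. Since $\bE_U$ is the product of $D$ independent Haar measures and the unitary factor is a product over colours of a monomial in a single $U^{(c)}$, the expectation factorises over $c$. To each colour I apply the Weingarten formula \eqref{eq:WeinDef}, feeding the first indices $a^c_s,b^c_s$ into its $i,j$ slots and the second indices $\gamma^c_s,\delta^c_s$ into its $a,b$ slots. This produces, for each $c$, a sum over $\sigma_c,\tau_c\in S_n$ of $W^{(N)}(\sigma_c\tau_c^{-1})$ weighted by $\prod_{s}\delta_{a^c_s,\,b^c_{\sigma_c(s)}}$ and $\prod_{s}\delta_{\gamma^c_s,\,\delta^c_{\tau_c(s)}}$. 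Assembling the per-colour permutations into $D$-tuples $\bsig,\btau\in\bS_n$ and exchanging the (finite) sums, I pull $\sum_{\bsig,\btau}\prod_{c}W^{(N)}(\sigma_c\tau_c^{-1})$ outside the sum over indices.

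Finally I would identify the two residual contractions. The part collecting the entries of $A$ with the deltas $\prod_{c,s}\delta_{a^c_s,\,b^c_{\sigma_c(s)}}$ is, verbatim, the definition of $\Tr_{\bsig}(A)$. The part collecting the entries of $B$ with $\prod_{c,s}\delta_{\gamma^c_s,\,\delta^c_{\tau_c(s)}}$ matches the definition of $\Tr_{\btau^{-1}}(B)$ after the relabelling $s\mapsto\tau_c(s)$ in each colour, which rewrites $\gamma^c_s=\delta^c_{\tau_c(s)}$ as $\delta^c_s=\gamma^c_{\tau_c^{-1}(s)}$; this is precisely where the inverse on $\btau$ appears. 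Substituting yields the claimed formula. The only genuinely delicate step is this index bookkeeping — keeping straight which of the four index families feeds each slot of \eqref{eq:WeinDef} and which are conjugated, and checking that the $B$-contraction produces $\btau^{-1}$ rather than $\btau$; the remaining manipulations are routine rearrangements of finite sums, and hold as an exact identity (the restriction $n\le N$ is needed only for uniqueness of $W^{(N)}$, not for \eqref{eq:WeinDef} itself).
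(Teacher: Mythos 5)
Your proposal is correct and follows essentially the same route as the paper: expand the $n$-th power in components, factor the unitary entries over the $D$ colours, apply the Weingarten formula \eqref{eq:WeinDef} once per colour, and recognize the residual contractions as $\Tr_{\bsig}(A)$ and $\Tr_{\btau^{-1}}(B)$, with the inverse on $\btau$ arising exactly as you describe from the asymmetric roles of the two index slots in \eqref{eq:WeinDef}. No gaps.
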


\begin{proof} The proof is straightforward. Starting from:
\[
[\Tr (AUBU^* )]^n = \sum_{ \text{all indices} } 
\; \prod_{s=1}^n  A_{j_s^1 \dots j_s^D , i_s^1 \dots i_s^D } 
\left( \prod_{c=1}^D U^{(c)}_{i_s^ca_s^c} \right) B_{ a_s^1 \dots a_s^D , b_s^1 \dots b_s^D} \left( \prod_{c=1}^D \bar U^{(c)}_{j_s^c b_s^c} \right) \;,
\]
and using $D$ times the Weingarten formula  \eqref{eq:WeinDef}, the expectation amounts to:
\[
 \sum_{ \bsig,\btau \in \bS_n} \; 
 \sum_{ \rm{all\ indices} }  
\left(  \prod_{s=1}^n A_{j_s^1 \dots j_s^D , i_s^1 \dots i_s^D } B_{ a_s^1 \dots a_s^D , b_s^1 \dots b_s^D}  \right) \left( \prod_{c=1}^D \prod_{s=1}^n \delta_{i^c_s , j^c_{\sigma_c(s) } }   \,
\delta_{a^c_s , b^c_{\tau_c(s) } }   \right) \, W^{(N)}(\sigma_c\tau_c^{-1}) \;,
\] 
where we recognize the definition of the trace invariants (Sec.~\ref{sub:trace-invariants}). Observe that the first and the second index of the  $U$'s in \eqref{eq:WeinDef}
play slightly different roles, leading to the fact that the permutations for the invariant 
of $B$ are inverted. 

\end{proof}

From Corollary~\ref{cor:asympt}, we obtain the asymptotic expression of the moments:
\begin{equation}
\bE\bigg([ \Tr (AUBU^* ) ]^n\bigg)  
= \sum_{
\bsig, \btau \in \bS_n}  \,    \Tr_{\bsig}(A) \, \Tr_{\btau^{-1}}(B) \,
  \prod_{c=1}^D\M(\sigma_c\tau_c^{-1}) N^{-n - \lVert\sigma_c\tau_c^{-1}\rVert } 
  \bigl(1+O(N^{-2} ) \bigr) \;.
\end{equation}

\subsection{Constellations}\label{sec:const}

We now review some results on the enumeration of constellations. Constellations are central to the combinatorial interpretation of our main results.

\paragraph{Definition and graphical representation.} 

Intuitively, a combinatorial map (fatgraph, or ribbon graph in the physics literature) is a graph embedded in  
a closed surface\footnote{More precisely, the graph is drawn on the surface without edge-crossing and such that the complement of the graph in the surface is homeomorphic to a collection of discs.  The graph is then considered up to orientation preserving homeomorphisms of the surface whose restriction to the embedded graph is an isomorphism. } in which each edge is subdivided into two \emph{half}-edges. The map is bipartite if its vertices have one of two flavors (say $1$ and $2$) and every edge connects two vertices of different flavors.

Formally a bipartite combinatorial map, or a \emph{$2$-constellation}, is an ordered pair of permutations $\hat\rho = (\rho_1,\rho_2), \; \rho_1,\rho_2\in S_n$. It is represented canonically as an embedded graph as follows:
\begin{itemize}
 \item we let the \emph{flavor} $i=1,2$. For each cycle of $\rho_i$ we draw a vertex embedded in the plane (a disk). For each $s\in \{1, \dots, n\}$ we attach a \emph{half-edge}, \emph{i.e.} an outgoing segment to one of the vertices, labeled 
 $\braket{s}_i$. Every $s$ belongs to a cycle of $\rho_i$ and we draw the half-edges $\braket{s}_i, \braket{\rho_i(s)}_i, \braket{ \rho_i(\rho_i(s))}_i$ and so on ordered cyclically \emph{counterclockwise} around the vertex corresponding to this cycle. 
\item for every $s\in \{1,\dots, n\}$ we join the two half-edges $\braket{s}_1$ and $\braket{s}_2$ into an edge labeled $s$.
\end{itemize}

 The permutations $\rho_1$ and $\rho_2$ encode the ``successor'' half-edge: $\braket{\rho_i(s)}_i$ is the first half-edge encountered after $\braket{s}_i$ when turning counterclockwise around the vertex of flavor $i$ to which $\braket{s}_i$ belongs. 
The permutation $\rho_1\rho_2$ maps the edge $s$ onto the edge $\rho_1(\rho_2(s))$ obtained by first stepping from $s$ to $\rho_2(s)$, the successor of $s$ on the vertex of flavor $2$ to which $s$ is hooked, and then stepping from $\rho_2(s)$ to $\rho_1(\rho_2(s))$, the successor of $\rho_2(s)$ on the vertex of flavor $1$ to which $\rho_2(s)$ is hooked.
The cycles of $\rho_1 \rho_2$ are the \emph{faces} of the map.

\

\begin{figure}[!h]
\centering
\includegraphics[scale=1.1]{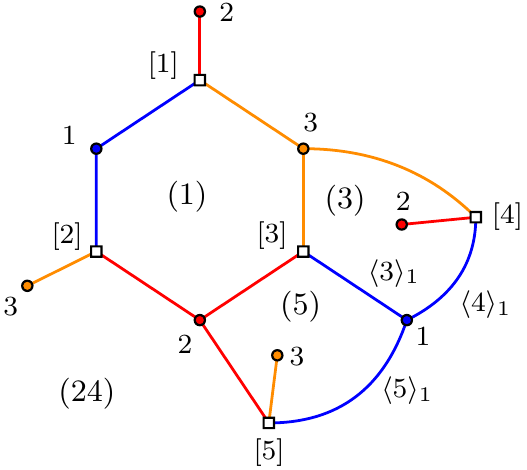}
\caption{A constellation with $k=3$ flavors and $n=5$ with $\rho_1=(12)(354)$,  $\rho_2=(1)(253)(4)$ and  $\rho_3=(134)(2)(5)$. Its faces are the cycles of $\rho_1\rho_2\rho_3=(1)(24)(3)(5)$, corresponding to three hexagons and one dodecagon. The labels in the figure are respectively the flavors of the vertices (no parenthesis) 
 the labels of some of the half edges (angled brackets),
the labels of the white vertices (square brackets), and the cycles of $\rho_1\rho_2\rho_3$ corresponding to the faces (parenthesis).
}
\label{fig:ex-constellation}
\end{figure}

This can be generalized to $k\ge 2$ flavors. A labeled \emph{$k$-constellation} is an ordered $k$-uple of permutations $ \hat{\rho} = (\rho_1,\ldots, \rho_k), \, \rho_1,\dots , \rho_k \in S_n$. The construction below is exemplified in Fig.~\ref{fig:ex-constellation}, which we will be using extensively:
\begin{itemize} 
\item To each permutation $\rho_i$, $i\in\{1,\ldots, k\}$ we associate a set of embedded \emph{vertices of flavor} $i$ corresponding to its cycles. In Fig.~\ref{fig:ex-constellation} the flavored vertices are represented as blue (for flavor $1$), red (for flavor $2$) and yellow (for flavor $3$). As $\rho_1 = (12)(354)$ there are two blue vertices in the figure, one bi-valent corresponding to the cycle $(12)$ and one tri-valent corresponding to the cycle $(354)$.

The vertices of flavor $i$ have a total of $n$ outgoing, cyclically ordered \emph{counterclockwise}, half-edges 
$\braket{s}_i$ for $s\in \{1,\dots, n\}$. 
For instance, in Fig.~\ref{fig:ex-constellation}, the three half-edges incident to the tri-valent blue vertex are labeled $\braket{ 3}_1,\braket{5}_1$ and $\braket{ 4}_1$.

\item To each $s\in\{1,\ldots,n\}$ we associate an embedded {\it white 
vertex} $[s]$.\footnote{The labeled white vertices generalize the labeled edges of the bipartite maps and can be viewed as hyper-edges.} We connect the half edges $\braket{s}_i$ for all the flavors $i$ to the vertex $[s]$ via edges such that the flavors are encountered in the order $1,\dots k$
when turning around the vertex $[s]$ \emph{clockwise}, see Fig.~\ref{fig:ex-constellation}. We label the edges by their end vertices as $([s],i)$ or $(i,[s])$.

\item The \emph{faces} of the constellation are the disjoint cycles of the product $\rho_1\cdots \rho_k$. A cycle of length $p$ corresponds to a face with $2pk$ corners (bounded by $2pk$ edges) that are alternatively white vertices $[s]$ and flavored vertices corresponding to $\rho_i$. The flavored vertices are encountered cyclically in the order $k, k-1,\dots 2, 1$ when going around the face while keeping the boundary edges to the left. 

The point is that the composition of the permutations $\rho_i$ encodes the walk around the faces of the constellation. To see this, let us consider the face to the right of the edge $([4],3)$ in Fig.~\ref{fig:ex-constellation}. Its perimeter consists in the edges 
\[ 
\begin{split}
&([4],3),(3,[1]),([1],2),(2,[1]),([1],1),(1,[2]), \crcr
&([2],3),(3,[2]),([2],2),(2,[5]),([5],1),(1,[4]) \;.
\end{split}
\]
The first two edges $([4],3),(3,[1])$ encode the fact that one passes from the white vertex $[4]$ to the white vertex $[1]$
by walking along a vertex of flavor $3$. This translates the fact that $\rho_3(4) = 1$. The next couple of edges $([1],2),(2,[1])$ translates the fact that $\rho_2(1) =1$, hence the first four edges together read in terms of permutations $\rho_2\rho_3(4) = 1$. Finally, $([1],1),(1,[2]) $ signifies that $\rho_1(1)=2$ which combined with the previous edges reads $\rho_1\rho_2\rho_3(4)=2$. Continuing the walk along the face encodes the action with $\rho_3,\rho_2$ and $\rho_1$ again. The face closes when we arrive back to the vertex $[4]$, at which point the tour around the face reads into $\rho_1\rho_2\rho_3\rho_1\rho_2\rho_3(4) = 4$.

\item The connected components of the resulting graph correspond to the transitivity classes of the group generated by $\{\rho_1,\dots, \rho_k\}$. 
Indeed for $s,s'\in \{1,\dots, n\}$ such that $\rho_{i_1} \cdots \rho_{i_l}(s) =s'$, the vertices $[s]$ and $[s']$ are connected in the constellation by the path: 
\[
([s],i_l)(i_l, [\rho_{i_l}(s)]) \dots ( [\rho_{i_2}\cdots \rho_{i_l}(s)]  i_1) ( i_1 [s']) \;. 
\]
The partition $\Pi(\rho_1,\dots, \rho_k) = \bigvee_{i=1}^k \Pi(\rho_i) \equiv\Pi(\hat\rho)$ is reconstructed by collecting all the white vertices $[s]$ belonging to the same connected component of the constellation into a block.
The number of connected components of a constellation is  $|\Pi(\hat \rho)|$. The constellation is said to be connected if $|\Pi(\hat \rho)|=1$, that is the group generated by $\{\rho_1,\ldots, \rho_k \} $  acts transitively on $\{1,\ldots,n\}$.
\end{itemize}

For $k=2$ the white vertices $[s]$ have valency 2 and therefore can be viewed as decorations (bearing labels) on edges, and we recover the bipartite maps described at the beginning of the section.

\paragraph{Euler characteristic.} A $k$-constellation $\hat \rho$ has $\sum_i \# ( \rho_i ) + n$ vertices,
$ kn $ edges, $\#(\rho_1\cdots  \rho_k)$ faces and $\lvert \Pi(\hat \rho)\rvert$ connected components. Being a combinatorial map, it has a non-negative genus, denoted by $g(\hat \rho)\ge 0$, and Euler characteristic:
\begin{equation}
\label{eq:Euler-constellations}
\sum_{i=1}^k \#(\rho_i) - n(k-1) + \#(\rho_1\cdots \rho_k) = 2 \lvert \Pi(\hat \rho)\rvert - 2g(\hat\rho) \;.
\end{equation}
A constellation (seen as a combinatorial map) is planar, $g(\hat\rho)=0$, 
 if and only if it can be drawn on the 2-sphere without edge-crossings such that each region of the complement of the graph on the sphere is homeomorphic to a disc. 
 
 Stated in terms of the length of the permutations, \eqref{eq:Euler-constellations} becomes:
\be
\label{eq:Euler-constellations-lengths}
\sum_{i=1}^k \lVert\rho_i\rVert =  2n -  \lVert \rho_1\cdots \rho_k\rVert + 2g(\hat \rho) - 2 \lvert \Pi(\hat \rho)\rvert \;.
\ee

Among the connected constellations, the planar ones are such that $\sum_{i=1}^k \lVert\rho_i\rVert$ is minimal at fixed $\nu = \rho_1\cdots \rho_k$.

\paragraph{Enumeration of planar constellations.} The main result we will need is due to \cite{BM-Schaeff} and concerns the enumeration of planar constellations. We fix $\nu\in S_n$. For $k\ge 2$, the number of \emph{connected planar} $k$-constellations $(\rho_1,\ldots, \rho_k)$ in $S_n$ with faces corresponding to disjoint
 cycles of
$\rho_1\cdots \rho_k=\nu$  is:
\be
\label{eq:BMSCH-mom-proper}
\tilde \gamma(\nu ; k)= k \frac{[(k-1)n - 1]!}{[(k-1)n - \#(\nu) + 2]!} \prod_{ p \ge 1} 
\Biggl[p\binom{k p -1}{p }\Biggr]^{d_p(\nu)} 
  =  \sum_{ \substack{  {\rho_1,\dots,\rho_k \in S_n, \, \rho_1\cdots \rho_k = \nu} \\
  { \Pi( \hat \rho  ) =1_n, \; \sum_i \lVert\rho_i\rVert = 2n -2 - \lVert\nu\rVert}
  } 
 } 1 
\; .
\ee
For the boundary values, we get:
\be
\tilde \gamma(\nu ; 0)= \delta_{\nu,\id} \;,\qquad \tilde \gamma(\nu ; 1)= \delta_{\#(\nu),1} \;.
\ee

This can be adapted for constellations $(\rho_1,\ldots, \rho_k)$ satisfying the same assumptions, but for which none of the permutations $\rho_i$ involved is the identity \cite{BM-Schaeff} (the constellations are said to be \emph{proper}), whose number is given by:
\be
\label{eq:planar-const-BMSCH}
\gamma(\nu ; k)= \sum_{j=0} ^{k} \binom{k}{j} \tilde \gamma(\nu ; j) (-1)^{k+j}  \; ,
\ee
and $\gamma(\nu ; 0) = \delta_{\nu;\id}$.
One can furthermore compute \cite{Collins03} the following alternating sum:
\begin{equation}
\label{eq:BMSCH-alt-sum}
\begin{split}
& \gamma(\nu) =  \sum_{k\ge 0 }\ (-1)^{k } \gamma( \nu, k) 
= (-1)^{\lVert\nu\rVert} \; \frac{(3n -\lVert\nu\rVert  - 3)!}{(2n )!} \prod_{p\ge 1} \Biggl [\frac {(2p)!}{p!(p-1)!}\Biggr]^{d_p(\nu)} \crcr
& \qquad \qquad = \M(\nu) \, \frac{(3n -\lVert\nu\rVert  - 3)!}{(2n )!}  \,
 \prod_{p\ge 1} \big[ 2p(2p-1) \big]^{d_p(\nu)}   \;,
\end{split}
\end{equation}
with $\M(\nu)$ the M\"oebius function on non-crossing partitions \eqref{eq:Moebius-on-NC}. 
Note that this is a class function.

More generally, we denote by $\tilde \gamma_l(\nu;   k )$ and $\gamma_l(\nu; k) $  the numbers of generic (\emph{i.e.} not necessarily proper) and respectively proper connected $k$-constellations with faces corresponding to the disjoint cycles of $\rho_1\cdots \rho_k =\nu$ and with $\sum_{i=1}^k\lVert\rho_i\rVert = l$ (hence genus $2g(\hat \rho)=l+2-2n + \lVert \nu \rVert$):
\begin{equation*}
\tilde \gamma_l(\nu ; k ) =
 \sum_{ \substack{  \rho_1,\dots,\rho_k \in S_n, \, \rho_1\cdots \rho_k = \nu \\ 
   \sum_i \lVert\rho_i\rVert = l , \; \Pi( \hat \rho  ) =1_n 
 } } 1   \;,\quad \; 
 \gamma_l(\nu ;  k ) = 
  \sum_{ \substack{  \rho_1,\dots,\rho_k \in S_n^*, \, \rho_1\cdots \rho_k = \nu \\ 
   \sum_i \lVert\rho_i\rVert = l , \; \Pi( \hat \rho  ) =1_n 
 } } 1 = \sum_{j=0}^k \binom{k}{j} \tilde \gamma_l(\nu;j) (-1)^{k+j} \;,
\end{equation*}
where the last equality follows by inverting the relation
$ \tilde \gamma_l(\nu ;  k )  = \sum_{j=0}^k  \binom{k}{j}\gamma_l(\nu ;  j )  $.

Finally, we denote by $\gamma_l(\nu)$ the alternating sum of the numbers of connected proper constellations with faces  corresponding to the disjoint cycles of $\rho_1\cdots \rho_k =\nu$ and $\sum_{i=1}^k\lVert\rho_i\rVert = l$:
\be
\label{eq:gamma_l}
 \gamma_l(\nu)  = \sum_{k\ge 0} (-1)^k \gamma_l(\nu;  k ) \;.
\ee
Eq.~\eqref{eq:BMSCH-mom-proper}, \eqref{eq:planar-const-BMSCH}, and \eqref{eq:BMSCH-alt-sum}  correspond to the minimal possible value
$l= 2n-2-\lVert\nu\rVert$.

\subsection{Cumulants}

For $X$ some random variable, the cumulant $C(X^n)$, also sometimes called connected correlation, is defined by:  
\[ {\cal C}(t) = \log \bE (\exp tX)= \sum_{p\ge 1}t^n\frac{C(X^n)}{n!} \; .\]
For instance, the second cumulant 
$ C(X^2) = \bE(X^2) -  \bE(X)^2 $
is the variance of the probability distribution of $X$.
The cumulants write in terms of the moments of the distribution and vice versa. In order to write down the relation between the two in a convenient form, we introduce some notation.

It is convenient to distinguish between the different factors $X$ in the monomial $X^n$. We do this by introducing a fictitious label $i=1,\dots n$ and writing $X^n = X_1 \cdots X_n$ where $X_i = X$ for all $i$. Then the $n$'th cumulant can be written as  
$C(X^n) \equiv C( \prod_{i=1}^n X_i) $.

For any partition $\pi\in \cP(n)$ with blocks $B\in \pi$, we define $C_{\pi}= \prod_{B\in\pi }  
C \left(\prod_{i\in B} X_i \right) $. We are now in the position to write the expectations in term of the cumulants:
\begin{equation}\label{eq:expect-cum}
 \bE \left( \prod_{i=1}^n X_i \right) =\sum_{\pi \in \cP(n) } C_{\pi } \;.
 \end{equation}

The equation \eqref{eq:expect-cum} can be inverted through the M\"oebius inversion formula
to yield the cumulants in term of the expectations. Defining
$
\bE_\pi = \prod_{B \in \pi } \bE \left( \prod_{i \in B} X_i  \right)$, we have:
\begin{equation}
\label{eq:mom-cum}
\bE_\pi= \sum_{\pi', \, \pi'\le \pi} C_{\pi '} \;,\qquad 
C_\pi= \sum_{\pi', \, \pi'\le \pi}   \lambda_{\pi',\pi}\bE_{\pi '} \;,
\end{equation}
where  $\lambda_{\pi',\pi}$ is the M\"oebius function for the lattice of partitions \cite{Rot64}\footnote{This should not be confused with the M\"oebius function for the lattice of \emph{non-crossing} partitions \eqref{eq:Moebius-on-NC}.}:
\[
\lambda_{\pi',\pi} = \prod_{B\in \pi} 
  (-1)^{|\pi'_{|B}|-1} \big(|\pi'_{|B}|-1 \big) ! \;,
\]
where $\pi'_{|B}$ is the restriction of the partition $\pi'$ to the block $B\in \pi$. This restriction is well defined because $\pi'\le \pi$. 
In particular, recalling that $1_n$ denotes the one block partition, we have:
\be
\label{eq:def-cum-Epi}
 C(X^n) = C_{1_n}   = \sum_\pi\lambda_{\pi}\bE_\pi \;  , \qquad \lambda_{\pi} \equiv \lambda_{\pi, 1_n}
   = (-1)^{|\pi|-1} (|\pi|-1)!
 \; .
\ee

\newpage

\section{Cumulants of the tensor HCIZ integral}
\label{sec:cumul}

The study of the cumulants of the tensor HCIZ integral is the core of this paper. They expand in terms of trace invariants of $A$ and $B$ times \emph{cumulant Weingarten functions} defined in Sec.~\ref{sub:exp-cumul-inv}. An expression of the latter as a series in $1/N$ is derived in Sec.~\ref{sum:cum-wein-exact-1N}. The coefficients of this expansion  are shown to count certain transitive factorizations of $D$-uplets of permutations. 

\subsection{The cumulant Weingarten functions}
\label{sub:exp-cumul-inv}

We denote $\chi(\cdot)$ the indicator function  which is one if the condition $\cdot$ is true and zero otherwise.

\begin{Def}[The cumulant Weingarten functions]
\label{def-of-cum-wein}
 For any partition $\pi$, let $W_\pi^{(N)}[\bsig, \btau ]$ be:
\begin{itemize}
\item[-] zero if at least one of the permutations involved in $\bsig$ or $\btau$ does not stabilize the blocks of $\pi$, that is $W^{(N)}_{\pi}[\bsig,\btau]$ is zero unless  $\Pi(\bsig,\btau) \le \pi $.

\item[-] the product over the blocks of $\pi$ of Weingarten functions involving permutations restricted to these blocks if all the permutations in $\bsig,\btau$ stabilize the blocks of $\pi$.
\end{itemize}

Denoting $\sigma_{c|B} $ the restriction of $\sigma_c$ to the block $B\in \pi$ (which is well-defined whenever $\sigma_c$ stabilizes the blocks of $\pi$), we have:
\[
 W_\pi^{(N)}[\bsig, \btau ] = \chi\big( \Pi(\bsig,\btau)\le \pi \big) \prod_{B \in \pi}\,  \prod_{c=1}^D W^{(N)}(\sigma_{ c |B} \tau_{c | B }^{-1} )
  \;.
\]

The \emph{cumulant Weingarten  function} $W_C^{(N)}[\bsig, \btau]$ is:
\be
\label{eq:WC-from-Wpi}
W_C^{(N)}[\bsig, \btau]=\sum_\pi \lambda_{\pi}W_\pi^{(N)}[\bsig ,\btau ] \;,
\ee
where $\lambda_{\pi} = \lambda_{\pi,1_n}$ is the M\"oebius function with the second argument set to the one-block partition.
\label{def:cum-weing-fun}
\end{Def}

Observe that, due to the indicator function, both $ W_\pi^{(N)}[\bsig, \btau ] $ and $ W_C^{(N)}[\bsig, \btau] $ depend on $\bsig$ and $\btau$ and not only on the product $\bsig\btau^{-1}$. The cumulant Weingarten functions arise naturally in the expansion of the cumulants of the tensor HCIZ integral over trace invariants.

\begin{Prop}
\label{prop:cumulants-first-expr}
The cumulants of the tensor HCIZ integral \eqref{eq:HCIZTens1} are:
\be 
\label{eq:cumulants}
C\bigg( [\Tr (AUBU^* )]^n\bigg)=\sum_{ \bsig,\btau\, \in\, \bS_{n}}
    \Tr_{\bsig}(A) \, \Tr_{\btau^{-1}}(B) \,   W_C^{(N)}[\bsig,\btau ] \;,
\ee
where $W_C^{(N)}[\bsig, \btau ]$ is the cumulant Weingarten functions, uniquely defined for $N\ge n$. 
\end{Prop}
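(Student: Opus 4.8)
The plan is to start from the Möbius relation \eqref{eq:def-cum-Epi}, which expresses the cumulant as a sum over set partitions of products of moments,
\[
C\bigl([\Tr(AUBU^*)]^n\bigr) = \sum_{\pi\in\cP(n)} \lambda_\pi\, \bE_\pi, \qquad \bE_\pi = \prod_{B\in\pi}\bE_U\bigl([\Tr(AUBU^*)]^{|B|}\bigr),
\]
where we use the fictitious labels $i=1,\dots,n$ introduced before \eqref{eq:expect-cum} together with the single random variable $X=\Tr(AUBU^*)$, so that each block $B$ contributes the moment of order $|B|$. The idea is then to insert into every block the moment formula of Prop.~\ref{prop:moments-exp}, written over the label set $B$ (identified with $\{1,\dots,|B|\}$ by any bijection, which affects nothing since both the trace invariants and the Weingarten functions depend only on the combinatorial data): each factor becomes a sum over a pair of $D$-uples of permutations of $B$, weighted by the trace invariants of $A$ and $B$ and a product over colors of ordinary Weingarten functions.

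The central step is to reassemble the block-wise data into global objects. First I would note that a choice of a $D$-uple of permutations of each block $B\in\pi$ is precisely the datum of a $D$-uple $\bsig\in\bS_n$ stabilising the blocks of $\pi$ (equivalently $\Pi(\bsig)\le\pi$), whose restriction to $B$ is $\bsig_{|B}=(\sigma_{1|B},\dots,\sigma_{D|B})$, and similarly for $\btau$. Second, I would establish the multiplicativity of the trace invariants along this decomposition: when $\bsig$ stabilises $\pi$ the Kronecker deltas in the definition of $\Tr_{\bsig}(A)$ only identify indices carried by labels lying in a common block, so the index sum factorises and $\Tr_{\bsig}(A)=\prod_{B\in\pi}\Tr_{\bsig_{|B}}(A)$, and likewise for $\Tr_{\btau^{-1}}(B)$. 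Combined with the factor $\prod_{B\in\pi}\prod_{c=1}^D W^{(N)}(\sigma_{c|B}\tau_{c|B}^{-1})$ produced by the Weingarten functions, this is by construction exactly $W_\pi^{(N)}[\bsig,\btau]$ on the locus $\Pi(\bsig,\btau)\le\pi$. Since the indicator $\chi(\Pi(\bsig,\btau)\le\pi)$ in Def.~\ref{def:cum-weing-fun} vanishes off this locus, the constrained sum over stabilising permutations may be extended verbatim to all of $\bS_n$, giving
\[
\bE_\pi = \sum_{\bsig,\btau\in\bS_n}\Tr_{\bsig}(A)\,\Tr_{\btau^{-1}}(B)\, W_\pi^{(N)}[\bsig,\btau].
\]

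Finally I would substitute this into the Möbius sum, exchange the two finite summations over $\pi$ and over $(\bsig,\btau)$, and recognise $\sum_\pi\lambda_\pi W_\pi^{(N)}[\bsig,\btau]$ as the cumulant Weingarten function $W_C^{(N)}[\bsig,\btau]$ of \eqref{eq:WC-from-Wpi}, which yields \eqref{eq:cumulants}. Unique definedness for $N\ge n$ is inherited from that of each $W^{(N)}$ occurring in a block, since every block satisfies $|B|\le n\le N$. The step I expect to require the most care is the reassembly argument: matching the bijection between ``permutations of each block'' and ``global permutations stabilising $\pi$'' with the block-factorisation of the trace invariants, and verifying that the indicator built into $W_\pi^{(N)}$ turns the extension of the permutation sum to all of $\bS_n$ into an exact identity rather than merely a one-sided bound. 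The remaining manipulations are routine bookkeeping.
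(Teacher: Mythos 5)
Your proposal is correct and follows essentially the same route as the paper: apply the moment--cumulant Möbius relation \eqref{eq:def-cum-Epi}, identify $\bE_\pi$ with $\sum_{\bsig,\btau}\Tr_{\bsig}(A)\Tr_{\btau^{-1}}(B)W_\pi^{(N)}[\bsig,\btau]$ via Prop.~\ref{prop:moments-exp}, and sum against $\lambda_\pi$ to recover \eqref{eq:WC-from-Wpi}. The only difference is that you spell out the reassembly step (block-wise permutations $\leftrightarrow$ global stabilising $D$-uples, factorisation of the trace invariants, and the role of the indicator $\chi(\Pi(\bsig,\btau)\le\pi)$) which the paper leaves implicit in the phrase ``follows from Prop.~\ref{prop:moments-exp}''; your verification of that step is accurate.
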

\begin{proof} To any partition $\pi$, we associate the expectation:
\[
\bE_\pi =\prod_{B \in \pi} \bE\bigg(\prod_{i\in B}   \Tr( A U_i B U_i^* ) \bigg) 
 =  \sum_{ \bsig,\btau\,\in \bS_{n} } \Tr_{\bsig}( A) \, \Tr_{ \btau^{-1}}(B) \,   W_\pi^{(N)}[\bsig ,\btau ]
\; ,
\]
where the second equality follows from Prop.~\ref{prop:moments-exp}. It then follows from \eqref{eq:def-cum-Epi} that:
\[
C\bigg( [\Tr (AUBU^* )]^n\bigg)=\sum_{ \bsig,\btau\,\in \bS_{n} }
   \Tr_{\bsig}( A ) \, \Tr_{\btau^{-1}}( B ) \,   \sum_\pi \lambda_{\pi}W_\pi^{(N)}[\bsig ,\btau ] \; ,
\] 
which leads to Eq.~\eqref{eq:cumulants} using the definition \eqref{eq:WC-from-Wpi}.

\end{proof}

\subsection{Exact expression of the cumulant Weingarten functions}
\label{sum:cum-wein-exact-1N}

\begin{Th} 
\label{thm:1Nexpansion-Weingarten-Cumulants}
The cumulant Weingarten functions are:
\[
W_C^{(N)}[\bsig, \btau ]= N^{-nD}  \sum_{l \ge 0} \;(-1)^l
p_C [\bsig , \btau ;l]  \; N^{-l } \; , 
\]
where:
\[
(-1)^l p_C [\bsig , \btau ;l] =  \sum_{k\ge 0}\;(-1)^{k}\,m_C( \bsig , \btau ; l, k) \; , 
\]
and $m_C( \bsig , \btau ; l, k)$ is the number of $D$-uplets of constellations
$  ( \rho^c_{i_c} )_{1\le i_c\le k_c} , \, c\in\{1,\dots, D\} $, with the following properties: 
\begin{itemize}
\item all the permutations $\rho^c_{i_c}$ are different from the identity permutation,
\item for all $c\in\{1,\ldots,D\}$, $\sigma_c\tau_c^{-1}=\rho^c_{1}\cdots \rho^c_{k_c} $,
\item $\sum_{c=1}^Dk_c =k$ with $k_c\ge 0$, and $k_{c}=0$ implies $\sigma_{c}=\tau_{c}$, 
\item $\sum_{c=1}^D\sum_{i_c=1}^{k_c} \lVert\rho^c_{i_c}\rVert =l$, 
\item the collection of all $\big\{ \sigma_c,\tau_c, (\rho^c_{i_c} )_{1\le i_c\le k_c} \big \}_{1\le c\le D}$, acts transitively on $\{1,\ldots , n\}$. 
\end{itemize}
This expansion is convergent for $N\ge n$.
\end{Th}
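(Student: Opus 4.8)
The plan is to substitute the convergent series \eqref{expansion-wg-permutations} for each single Weingarten factor into Definition \ref{def:cum-weing-fun} and then carry out the Möbius sum $W_C^{(N)}=\sum_\pi\lambda_\pi W_\pi^{(N)}$ of \eqref{eq:WC-from-Wpi}. First I would expand, for every block $B\in\pi$ and every color $c$, the factor $W^{(N)}(\sigma_{c|B}\tau_{c|B}^{-1})$ as $N^{-|B|}$ times the signed, $N^{-\lVert\cdot\rVert}$-weighted sum over ordered factorizations of $\sigma_{c|B}\tau_{c|B}^{-1}$ into non-identity permutations supported on $B$. Collecting the prefactors gives $\prod_{c}\prod_{B\in\pi}N^{-|B|}=N^{-nD}$, so that $W_\pi^{(N)}=N^{-nD}\chi\big(\Pi(\bsig,\btau)\le\pi\big)\sum(-1)^{k}N^{-l}$, the sum running over $D$-uplets of factorizations each of whose factors lives inside a single block of $\pi$. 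The empty factorization in color $c$ occurs precisely when $\sigma_c\tau_c^{-1}=\id$, which is exactly the side condition $k_c=0\Rightarrow\sigma_c=\tau_c$, while $l$ and $k$ are bookkept by the exponent of $N^{-1}$ and by the total number of factors.

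The heart of the argument is to show that applying $\sum_\pi\lambda_\pi$ turns ``all $D$-uplets of factorizations'' into ``transitive'' ones. Since $W_\pi^{(N)}$ is, by its very definition, multiplicative over the blocks of $\pi$, the combination $W_C^{(N)}$ is the partition-lattice cumulant of the ``moments'' $\prod_c W^{(N)}(\sigma_{c|B}\tau_{c|B}^{-1})$ in the exact sense of \eqref{eq:mom-cum}--\eqref{eq:def-cum-Epi}. By uniqueness of cumulants it therefore suffices to verify the companion moment identity, namely that $\prod_c W^{(N)}(\sigma_c\tau_c^{-1})=\sum_{\pi\ge\Pi(\bsig,\btau)}\prod_{B\in\pi}T(B)$, where $T(B):=N^{-|B|D}\sum(-1)^kN^{-l}$ is the candidate transitive generating function over the factorizations on $B$ whose data, together with $\bsig,\btau$, act transitively on $B$. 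Grouping the unrestricted product on the left according to the transitivity partition $P$ of the chosen factorization reduces this to the block-local statement $C(P)=\prod_{B\in P}T(B)$, where $C(P)$ collects the factorizations whose transitivity partition is \emph{exactly} $P$.

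The main obstacle is precisely this last identity, which is subtler than a termwise bijection: a single ordered factorization with transitivity partition $P$ may contain a factor that is nontrivial on several blocks of $P$ at once (for instance a factor stabilizing two blocks and permuting within each), and such a factorization does not split as a product of block-local transitive factorizations. The resolution I would pursue is an inclusion--exclusion over ``support patterns'': any factor lying in the Young subgroup $\prod_{B\in P}S_B$ decomposes uniquely as a commuting product of its block restrictions, the length is additive, $\lVert\rho\rVert=\sum_{B}\lVert\rho_{|B}\rVert$, and splitting or merging such factors changes the number of factors $k$ by exactly the amount for which the alternating sign $(-1)^k$ forces all non-block-local contributions to cancel. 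Organizing the factorizations in $\prod_{B\in P}S_B$ by which blocks each factor touches and summing the signs, I expect the series to collapse to $\prod_{B}T(B)$; this is the step that genuinely uses the alternating sign together with the factorials hidden in $\lambda_\pi$. Once $C(P)=\prod_{B\in P}T(B)$ is established, the moment identity and hence the cumulant identification follow, the coefficient of $N^{-nD-l}$ in $W_C^{(N)}$ is read off as $(-1)^l p_C[\bsig,\btau;l]=\sum_k(-1)^k m_C(\bsig,\btau;l,k)$, and convergence for $N\ge n$ is inherited from that of \eqref{expansion-wg-permutations}, since only finite products and the finite sum over $\cP(n)$ are involved.
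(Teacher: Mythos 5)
Your proposal is correct, and its overall architecture --- expand each block Weingarten factor via \eqref{expansion-wg-permutations}, pull out the prefactor $N^{-nD}$, and let the partition-lattice M\"obius sum select the transitive factorizations --- is the same as the paper's. Where you genuinely diverge is in how the regrouping is organized. The paper, inside the block-local expansion of $W_\pi^{(N)}$, regroups the factorizations by the partition $\Pi(\bsig,\btau)\vee\Pi(\brho)$ (its ``subtle point'' being that regrouping by $\Pi(\brho)$ alone fails because of the indicator $\chi(\Pi(\bsig,\btau)\le\pi)$), declares the resulting family $W^{(N)}_{C,\pi'}$ to be the cumulants, and inverts; it does not explicitly check that the term attached to $\pi'$ is the same whether the factors are taken block-local to $\pi$ or to $1_n$. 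You instead verify the companion moment identity $W_\pi^{(N)}=\sum_{\pi'\le\pi}\prod_{B\in\pi'}T(B)$ head-on, which forces you to confront exactly that issue: a non-identity factor in the Young subgroup $\prod_B S_B$ may restrict to the identity on some blocks, so the two index sets do not match termwise. Your resolution --- split each factor into its commuting block restrictions, use additivity of $\lVert\cdot\rVert$, and sum the signs over support patterns --- does go through: the required cancellation is $\sum_k(-1)^k c_k=\prod_B(-1)^{k_B}$, where $c_k$ counts coverings of $\{1,\ldots,k\}$ by order-preserving injections of the sets $\{1,\ldots,k_B\}$, and this is precisely the generating-function identity $\sum_{k\ge0}\bigl[1-\prod_B(1+x_B)\bigr]^k=\prod_B(1+x_B)^{-1}$ that the paper itself invokes later, in the proof of Thm.~\ref{thm:asympt-cum-weing}. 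So your route makes explicit, and correctly discharges, a step the paper leaves implicit. Two small inaccuracies: the cancellation does not use ``the factorials hidden in $\lambda_\pi$'' (those enter only through the final M\"obius inversion), and the moment identity is needed at every $\pi$, not just at $\pi=1_n$ --- though your block-local statement $C(P)=\prod_{B\in P}T(B)$, applied to the restrictions of the data to the blocks of $\pi$, delivers exactly that.
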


The boundary values are $m_C( \bsig , \btau ; 0 , k )= \delta_{\lvert \Pi(\bsig,\btau)\rvert, 1} \delta_{\bsig; \btau}  \delta_{k,0}$ and $p_C[\bsig, \btau ; 0 ] = \delta_{\lvert \Pi(\bsig,\btau)\rvert, 1} \delta_{\bsig; \btau} $.

\begin{proof}
The functions $W_\pi^{(N)}[\bsig ,\btau ]$ in Def.~\ref{def-of-cum-wein} are non-trivial only if $\bsig$ and $\btau$ stabilize the blocks of $\pi$. We denote by $\bnu = \bsig\btau^{-1}$ and $\nu_{c|B} $ the restriction of $\nu_c$ to the block $B$. \eqref{expansion-wg-permutations} leads to:
\begin{align}
& W_\pi^{(N)}[\bsig ,\btau ] =  \chi\big( \Pi(\bsig,\btau)\le \pi \big) \prod_{c=1}^D\prod_{B \in \pi } N^{-\lvert B \rvert} \Bigl( \sum_{k_c^B  \ge 0} 
\sum_{ \substack{ \rho^{c,B}_{1} ,\ldots  \rho^{c,B}_{k_c^B }\, \in\, S_{|B|}^*  \\
  \rho^{c,B}_{1}  \cdots  \rho^{c,B}_{k_c^B } = \nu_{c|B} 
} } \;(-1)^{k_c^B} \,N^{-\sum_{i_c^B=1}^{k_c^B } \lVert\rho^{c,B}_{i_c^B}\rVert}
\Bigr) \crcr
& \qquad = \chi\big( \Pi(\bsig,\btau)\le \pi \big) \sum_{ \{k_c^B \}_{c,B} \ge 0} \;
\sum_{ \substack{ \big\{ \rho^{c,B}_{1} ,\ldots  \rho^{c,B}_{k_c^B }\, \in\, S_{|B|}^* \big\}_{c,B} \\
  \big\{ \rho^{c,B}_{1}  \cdots  \rho^{c,B}_{k_c^B } = \nu_{c|B} \big\}_{c,B} 
} } \; (-1)^{\sum_{c,B}k_c^B} N^{-nD - \sum_{c,B}\sum_{i_c^B=1}^{k_c^B } \lVert\rho^{c,B}_{i_c^B}\rVert} \;.
\end{align}
where we have exchanged the sums and the products. Note that if $k_c^B=0$ for some $c$ and $B$, then there are no permutations $\rho^{c,B}_{i_c^B}$ and the rightmost sum becomes $\delta_{\nu_{c|B} ; \id}$.

 The permutations $\rho^{c,B}_{i_c^B}$ can be trivially lifted to permutations on $\{1,\dots,n\}$ by supplementing them with the identity on the complement of $B$.
We denote the set of all the (lifted)  permutations $\rho$ by:
\[ 
 \brho = \bigg\{  \rho_{i^B_c}^{c,B}  \bigg| \, 1\le i^B_c \le k_c^B ,\; B \in \pi , \; c\in\{ 1,\dots, D\}  \bigg\}  \; ,
\]
and $\Pi(\brho)$ the partition induced by the transitivity classes of the group generated by all the permutations in $\brho$. 
As $\rho_{i^B_c}^{c,B} $ acts non-trivially only on the block $B\in \pi$, it follows that all the permutations in $\brho$ stabilize the partition $\pi$, hence $\Pi(\brho) \le \pi$. 

Now comes the subtle point. We would like to rewrite $W_\pi^{(N)}[\bsig ,\btau ]$ via a moment-cumulant formula such as \eqref{eq:mom-cum}, that is as a sum over $\pi' \le \pi$ of "cumulants". The obvious idea to reorganize the sum by the partition $ \Pi(\brho)$ of a summand which in turn sums all the $\brho$s with the same $\Pi(\brho)$ fails due to the global factor $\chi\big( \Pi(\bsig,\btau)\le \pi \big)$.  The second idea works: we reorganize the sum by the partition $\Pi(\bsig,\btau) \vee \Pi(\brho) \ge \Pi(\bsig,\btau) $, that is we note that:
\[
W_\pi^{(N)}[\bsig ,\btau ] = \sum_{\Pi(\bsig, \btau) \le \pi' \le \pi}\; W_{C,\pi'}^{(N)}[\bsig ,\btau ] \;,
\]
with the cumulant:
\be
W_{C,\pi'}^{(N)}[\bsig ,\btau ]  
=\sum_{ \{k_c^B \}_{c,B} \ge 0} \;
\sum_{ \substack{ \big\{ \rho^{c,B}_{1} ,\ldots,  \rho^{c,B}_{k_c^B }\, \in\, S_{|B|}^* \big\}_{c,B} \\
  \big\{ \rho^{c,B}_{1}  \cdots  \rho^{c,B}_{k_c^B } = \nu_{c|B} \big\}_{c,B} \\
 \Pi(\bsig,\btau) \vee \Pi(\brho) = \pi'} } \; (-1)^{\sum_{c,B}k_c^B} N^{-nD - \sum_{c,B}\sum_{i_c^B=1}^{k_c^B } \lVert\rho^{c,B}_{i_c^B}\rVert} 
 \; .
\ee
This expression is inverted using \eqref{eq:mom-cum} to yield:
\[
W_{C,\pi}^{(N)}[\bsig ,\btau ]=
\sum_{\Pi(\bsig, \btau) \le \pi'\le \pi}\lambda_{\pi',\pi}
W_{\pi'}^{(N)}[\bsig ,\btau ] \; .\]
Choosing $\pi=1_n$, we recover the right hand side of \eqref{eq:WC-from-Wpi}, thus $W_C^{(N)}[\bsig, \btau ]=W_{C,1_n}^{(N)}[\bsig ,\btau ] $, i.e.:
\begin{align}
\label{eq:explicit-expr-WC}
 W_C^{(N)}[\bsig, \btau ] &= \sum_{\{k_{c}\}_{c}\ge 0} \; 
\sum_{ \substack{ \{ \rho^{c}_1,\ldots , \rho^{c}_{k_{c}}\,\in\, S_{n}^* , \, \rho^{c}_1\cdots\; \rho^{c}_{k_{c}}=\;\nu_{c}  \}_c  \\
 \Pi(\bsig,\btau) \vee \Pi(\brho) = 1_n } }
(-1)^{\sum_{c}k_{c}} N^{-nD - \sum_{c}\sum_{i_c=1}^{k_{c}}\lVert\rho^{c}_{i_c} \rVert} \;,
\crcr &  = \sum_{k,l\ge 0}   (-1)^{k} N^{-nD - l}    \sum_{\substack{{\{k_{c}\}_{c}\ge 0}\\{\sum_c k_c = k}}} \; 
\sum_{ \substack{ \{ \rho^{c}_1,\ldots , \rho^{c}_{k_{c}}\,\in\, S_{n}^* , \, \rho^{c}_1\cdots\; \rho^{c}_{k_{c}}=\;\nu_{c}  \}_c  \\
 \Pi(\bsig,\btau) \vee \Pi(\brho) = 1_n, \; \sum_{c=1}^D\sum_{i_c=1}^{k_c} \lVert\rho^c_{i_c}\rVert =l } }1
\; ,
\end{align}
and we recognize the coefficient of $N^{-nD - l}$ in this expansion to be the alternating sum defining $m_C(\bsig, \btau; l, k)$.

\end{proof}

One drawback of Eq.~\eqref{eq:explicit-expr-WC} is that analytic bounds are difficult to obtain because the sum is signed. On the other hand, it renders obvious the invariance by relabeling of $\{1,\ldots,n\}$. 
\begin{Cor}
\label{cor:WC-is-W}
If $\{\bsig,\btau\}$ act transitively on $\{1,\ldots,n\}$, that is $\lvert \Pi(\bsig,\btau)\rvert=1$, then:
\[
W_C^{(N)}[\bsig, \btau ] \Big\lvert_{ _{\lvert \Pi(\bsig,\btau)\rvert=1}} = \prod_{c=1}^DW^{(N)}[\sigma_c \tau_c^{-1} ]  = \prod_{c=1}^DN^{-n - \lVert\sigma_c\tau_c^{-1}\rVert} \; \M(\sigma_c\tau_c^{-1})(1+O(N^{-2})) \; . 
\]
If moreover $\bsig=\btau$, then  
$ W_C^{(N)}[\bsig, \bsig ] \Big\lvert_{ _{\lvert \Pi(\bsig)\rvert=1}}=  N^{-nD} \big( 1+O(N^{-2} ) \big)  $.
 
\end{Cor}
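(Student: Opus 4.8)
The plan is to start from the exact expression \eqref{eq:explicit-expr-WC} for $W_C^{(N)}[\bsig,\btau]$ and to observe that the hypothesis $\lvert\Pi(\bsig,\btau)\rvert=1$ makes the transitivity constraint degenerate. The only coupling between the different colors in \eqref{eq:explicit-expr-WC} is the single global requirement $\Pi(\bsig,\btau)\vee\Pi(\brho)=1_n$. When $\Pi(\bsig,\btau)=1_n$ already, one has $\Pi(\bsig,\btau)\vee\Pi(\brho)=1_n$ for \emph{every} choice of $\brho$, since $1_n$ is the top of the partition lattice. Hence the constraint is vacuous and may simply be dropped.

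With the constraint gone, the summand in \eqref{eq:explicit-expr-WC} factorizes completely over $c\in\{1,\dots,D\}$: each color carries its own independent sum over $k_c\ge 0$ and over sequences $\rho^c_1,\dots,\rho^c_{k_c}\in S_n^*$ with $\rho^c_1\cdots\rho^c_{k_c}=\nu_c=\sigma_c\tau_c^{-1}$. Therefore
\be
W_C^{(N)}[\bsig,\btau]\Big\lvert_{\lvert\Pi(\bsig,\btau)\rvert=1}=\prod_{c=1}^D\Bigg(N^{-n}\sum_{k_c\ge 0}\sum_{\substack{\rho^c_1,\dots,\rho^c_{k_c}\in S_n^*\\ \rho^c_1\cdots\rho^c_{k_c}=\nu_c}}(-1)^{k_c}N^{-\sum_{i_c=1}^{k_c}\lVert\rho^c_{i_c}\rVert}\Bigg).
\ee
The crucial point is that each factor is \emph{verbatim} the expansion \eqref{expansion-wg-permutations} of the single-matrix Weingarten function evaluated at $\nu_c$, so the bracketed sum equals $W^{(N)}(\sigma_c\tau_c^{-1})$ for each $c$. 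This establishes the first equality $W_C^{(N)}[\bsig,\btau]=\prod_c W^{(N)}(\sigma_c\tau_c^{-1})$.

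The second equality then follows by inserting the asymptotic expansion of Corollary~\ref{cor:asympt} into each factor. For the final assertion I would specialize to $\bsig=\btau$, so that $\nu_c=\sigma_c\tau_c^{-1}=\id$ for all $c$; then $\lVert\id\rVert=0$ and $\M(\id)=1$ (the only contributing factor in \eqref{eq:Moebius-on-NC} is $p=1$, which equals $1$), whence each factor is $N^{-n}(1+O(N^{-2}))$ and the product is $N^{-nD}(1+O(N^{-2}))$. There is really no serious obstacle here; the one point deserving care is the logical status of the transitivity constraint, namely checking that $\Pi(\bsig,\btau)=1_n$ forces the join to equal $1_n$ for every $\brho$, as it is exactly this fact that decouples the colors and turns the \emph{cumulant} Weingarten function back into a plain product of ordinary Weingarten functions.
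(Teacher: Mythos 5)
Your proposal is correct and follows essentially the same route as the paper: the paper's own (one-line) proof likewise observes that $\Pi(\bsig,\btau)=1_n$ renders the constraint $\Pi(\bsig,\btau)\vee\Pi(\brho)=1_n$ in \eqref{eq:explicit-expr-WC} vacuous, so the sum factorizes over colors into the expansion \eqref{expansion-wg-permutations} of the ordinary Weingarten function. Your additional verifications (the asymptotics via Corollary~\ref{cor:asympt} and the evaluation $\lVert\id\rVert=0$, $\M(\id)=1$ for the case $\bsig=\btau$) are accurate and merely make explicit what the paper leaves implicit.
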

\begin{proof} If $\Pi(\bsig, \btau)=1_n$, the condition $\Pi(\bsig,\btau) \vee \Pi(\brho) = 1_n$ in \eqref{eq:explicit-expr-WC} is lifted, and we recover the product of \eqref{expansion-wg-permutations} for each color.

\end{proof} 

\begin{Rk}
\label{rk:n-and-mC}
Let $D=1$ and fix $\sigma,\tau\in S_n$. We observe that $\Pi(\sigma, \tau) \ge \Pi(\sigma\tau^{-1})$ 
and  $\Pi(\hat \rho) \ge \Pi(\sigma \tau^{-1})$ for any $\hat \rho$ such that  $\sigma\tau^{-1} = \rho_1\cdots \rho_k$.
This is because the group generated by $\sigma \tau^{-1}$ is a subgroup of both the group generated by $(\sigma,\tau)$ and the one generated by $(\rho_1, \dots , \rho_k) $.

We have that $ \Pi(\sigma,\tau) \vee \Pi(\hat \rho) = \Pi( \hat \rho)$ for any $\hat \rho$ such that $\sigma\tau^{-1} = \rho_1\cdots \rho_k$ if and only if $\Pi(\sigma, \tau) = \Pi(\sigma\tau^{-1})$\footnote{The condition $\Pi(\sigma, \tau) = \Pi(\sigma\tau^{-1})$ means that the bipartite map $(\sigma, \tau^{-1})$ has a single face per connected component: the faces of this map are exactly the cycles of $\sigma\tau^{-1}$ hence correspond to the blocks of $\Pi(\sigma\tau^{-1})$, while the connected components correspond to the blocks of $ \Pi(\sigma, \tau)$.}. This comes about as follows:
\begin{itemize}
    \item if $ \Pi(\sigma,\tau) = \Pi(\sigma\tau^{-1}) $ then,
    taking into account that $\Pi(\hat \rho) \ge \Pi(\sigma \tau^{-1})$, we conclude that $\Pi(\hat \rho) \ge  \Pi(\sigma,\tau) $. therefore $ \Pi(\sigma,\tau) \vee \Pi(\hat \rho) = \Pi( \hat \rho)$.
 \item conversely, assume that $\Pi(\sigma, \tau) > \Pi(\sigma\tau^{-1})$. We will exhibit three permutation 
 $\rho_1,\rho_2,\rho_3$ such that $\rho_1 \rho_2 \rho_3 = \sigma\tau^{-1}$ and such that 
  $ \Pi(\sigma,\tau) \vee \Pi(\hat \rho) > \Pi( \hat \rho)$.
 
As $\Pi(\sigma, \tau) > \Pi(\sigma\tau^{-1})$ there exists a block of $\Pi(\sigma, \tau)$ containing at least two blocks $B_1$ and $B_2$ of $\Pi(\sigma\tau^{-1})$. We chose $\rho_1,\rho_2,\rho_3$
as follows:
\begin{itemize}
 \item $\rho_{1|B_1} = (\sigma\tau^{-1})_{|B_1}$ and $\rho_1$ is the identity on the complement of $B_1$, 
 \item $\rho_{2|B_2} = (\sigma\tau^{-1})_{|B_2}$ and $\rho_2$ is the identity on the complement of $B_2$ 
 \item  $\rho_3$ the identity on $B_1$ and $B_2$ and coinciding with $\sigma\tau^{-1}$
 on their complement:
 \[ 
 \rho_{3|\{1,\dots n\} \setminus B_1\setminus B_2} = (\sigma\tau^{-1})_{|\{1,\dots n\} \setminus B_1\setminus B_2} \;.\] 
\end{itemize}
Obviously $\sigma\tau^{-1} = \rho_1\rho_2\rho_3$. At the same time $ \Pi(\sigma,\tau) \vee \Pi(\hat \rho) >  \Pi( \hat \rho)$ because the blocks $B_1$ and $B_2$ which are distinct blocks in $\Pi(\hat \rho)$ are collapsed into one block of $\Pi(\sigma,\tau)$.
 \end{itemize}
 
Now, at $D=1$, for any $\sigma$ and $\tau$ we have:
\be\label{eq:mmcc}
m_C(\sigma,\tau; l, k )  = 
\sum_{ \substack{ \rho_1,\ldots , \rho_{k}\,\in\, S_{n}^* , \, \rho_1\ldots\; \rho_{k}=\;\sigma\tau^{-1}  \\
   \sum_{i=1}^{k}\lVert\rho_{i} \rVert =l \;, \;\; 
   \Pi(\sigma,\tau) \vee \Pi(\hat \rho) = 1_n } 
 } 1 \; ,
\ee
and, using on the one hand the remark above and on the other noting that $\Pi(\sigma\tau^{-1})\le \Pi(\hat \rho)$ we get:
\be 
\label{eq:gamma-and-pC-D1}
\begin{split}
& m_C(\sigma,\tau; l, k) \Big{|}_{
 \Pi(\sigma,\tau)   = \Pi(\sigma\tau^{-1})}   =  m_C(\sigma\tau^{-1}, \mathrm{id} ; l, k) = \gamma_l(\sigma\tau^{-1} ; k )   \;,
\crcr
& (-1)^{l}  p_C[ \sigma, \tau; l ] \Big{|}_{ \Pi(\sigma,\tau) =  \Pi (\sigma\tau^{-1})} = 
 (-1)^{l}  p_C[ \sigma\tau^{-1}, \mathrm{id}; l ]  
= \gamma_l(\sigma\tau^{-1} )      \;,
\end{split}
\ee
as in both cases the condition $ \Pi(\sigma,\tau) \vee \Pi(\hat \rho) = 1_n$ in  \eqref{eq:mmcc} reduces to $ \Pi(\hat \rho) = 1_n $.

\end{Rk}

\

There is a non-signed version in terms of a generalization of monotone double Hurwitz numbers, which we describe now (see also Sec.~\ref{sub:Hurwitz}). 
\begin{Prop} 
\label{def:PC}
The number $p _C[\bsig,\btau ;l]$ in Thm.~\ref{thm:1Nexpansion-Weingarten-Cumulants} is also the number of $D$ ordered sequences of \emph{transpositions} $ (\mu^1_1,\ldots, \mu^1_{l_1}), \ldots,  (\mu^D_1,\ldots, \mu^D_{l_D}) $ such that:
\begin{itemize}
 \item for every $c\in\{1,\dots, D\}$, the $l_c$ transpositions $ \mu^c_1,\ldots, \mu^c_{l_c} $ have weakly monotone maxima (Def.~\ref{def:weakly-monotone}) and satisfy $\sigma_c = \mu^c_1 \ldots \mu^c_{l_c}\tau_c$, 
\item $l_1+\ldots + l_D=l$,
\item the group generated by all the transpositions $\mu$ and all $\sigma_c$ and $\tau_c$ is transitive on  $\{ 1,\ldots , n\}$.
\end{itemize}
In particular $p _C[\bsig,\btau ;l]$ is a non-negative integer. 
\end{Prop}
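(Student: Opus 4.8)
The plan is to bypass the intermediate quantities $m_C$ altogether and instead expand $W_C^{(N)}[\bsig,\btau]$ directly through the \emph{monotone} (Collins--Matsumoto) form of the ordinary Weingarten function, read off $p_C[\bsig,\btau;l]$ as a coefficient, and recognise the result as a moment--cumulant transform of the monotone count claimed in the statement. First I would start from $W_C^{(N)}[\bsig,\btau]=\sum_\pi\lambda_\pi W_\pi^{(N)}[\bsig,\btau]$ of Def.~\ref{def:cum-weing-fun} and insert, into each factor $W^{(N)}(\sigma_{c|B}\tau_{c|B}^{-1})$ of $W_\pi^{(N)}$, the expansion $W^{(N)}(\nu)=N^{-|B|}\sum_{l}(-1)^l|P(\nu;l)|\,N^{-l}$ provided by the Collins--Matsumoto theorem. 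Collecting the prefactors as $\prod_{B,c}N^{-|B|}=N^{-nD}$, using $(-1)^{\sum_{c,B}l_{c,B}}=(-1)^l$, and noting that the indicator $\chi\big(\Pi(\bsig,\btau)\le\pi\big)$ confines the sum to $\pi\ge\Pi(\bsig,\btau)$, I match the coefficient of $N^{-nD-l}$ against Thm.~\ref{thm:1Nexpansion-Weingarten-Cumulants} to get
\begin{equation*}
p_C[\bsig,\btau;l]=\sum_{\pi\ge\Pi(\bsig,\btau)}\lambda_\pi\sum_{\sum_B l_B=l}\ \prod_{B\in\pi}H^\circ[\bsig_{|B},\btau_{|B};l_B],
\end{equation*}
where $H^\circ[\bsig_{|B},\btau_{|B};l_B]:=\sum_{\sum_c l_{c,B}=l_B}\prod_{c=1}^D|P(\sigma_{c|B}\tau_{c|B}^{-1};l_{c,B})|$ is exactly the number of $D$-tuples of weakly monotone transposition sequences supported on the block $B$, of total length $l_B$, with \emph{no} transitivity requirement (the colours decouple once transitivity is dropped).

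Next I would establish the matching relation purely on the monotone side. Write $H[\bsig,\btau;l]$ for the transitive count of Prop.~\ref{def:PC} and $H^\circ[\bsig,\btau;l]$ for the same count with the transitivity condition removed. The decisive Lemma is that $H^\circ$ is the ``moment'' whose ``cumulants'' are the $H$: organising a non-transitive monotone $D$-tuple by the partition $\pi\ge\Pi(\bsig,\btau)$ generated by $\{\sigma_c,\tau_c,\mu^c_i\}$ yields
\begin{equation*}
H^\circ[\bsig,\btau;l]=\sum_{\pi\ge\Pi(\bsig,\btau)}\ \sum_{\sum_B l_B=l}\ \prod_{B\in\pi}H[\bsig_{|B},\btau_{|B};l_B].
\end{equation*}
Passing to generating functions in a bookkeeping variable tracking $l$, this is precisely the moment--cumulant relation \eqref{eq:expect-cum} on the interval lattice $[\Pi(\bsig,\btau),1_n]$ (isomorphic to the partition lattice of the blocks of $\Pi(\bsig,\btau)$, on which $\lambda_{\pi,1_n}$ restricts to the Möbius function). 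Its inversion \eqref{eq:def-cum-Epi} then gives $H[\bsig,\btau;l]=\sum_{\pi\ge\Pi(\bsig,\btau)}\lambda_\pi\sum_{\sum_B l_B=l}\prod_{B\in\pi}H^\circ[\bsig_{|B},\btau_{|B};l_B]$, which is term-by-term identical to the formula displayed above for $p_C$. Hence $p_C[\bsig,\btau;l]=H[\bsig,\btau;l]$, which is the assertion; non-negativity is then manifest since $H$ counts a set.

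The main obstacle is proving the block-factorisation Lemma, i.e.\ that the displayed relation for $H^\circ$ genuinely is a moment--cumulant relation. Once the generated partition of a monotone tuple equals $\pi$, every transposition $\mu^c_i$ stabilises $\pi$ and therefore lies inside a single block (its two moved points share a block); so for each colour the sequence $\mu^c_1,\dots,\mu^c_{l_c}$ splits into subsequences supported on the distinct blocks, each subsequence inheriting the weakly-monotone-maxima property and multiplying to $\nu_{c|B}$, while on each block the restricted data is transitive. The real content is the converse: because transpositions living in different blocks have maximal moved elements in \emph{disjoint} sets, a collection of per-block weakly monotone sequences admits a \emph{unique} merge into a global weakly monotone sequence — maxima tie only within a single block, where the block order is already fixed, while across blocks the order is forced by monotonicity, so the riffle-by-maximum is well-defined and invertible. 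Verifying this bijection, in particular the uniqueness of the merge and that it preserves both the factorisation constraints $\sigma_c=\mu^c_1\cdots\mu^c_{l_c}\tau_c$ and the resulting generated partition, is the one genuinely combinatorial step; everything else is bookkeeping with the Collins--Matsumoto theorem and Möbius inversion on the partition lattice.
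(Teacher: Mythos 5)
Your proposal is correct and follows essentially the same route as the paper: both substitute the Collins--Matsumoto weakly monotone expansion of $W^{(N)}$ into the block-factorized $W_\pi^{(N)}$ and then perform M\"obius inversion on the interval $[\Pi(\bsig,\btau),1_n]$ of the partition lattice, using the join condition $\Pi(\bsig,\btau)\vee\Pi(\bhatmu)$ to isolate the transitive contribution. The one genuinely combinatorial step you isolate --- that per-block weakly monotone sequences merge uniquely into a global weakly monotone sequence because maxima from disjoint blocks are distinct --- is exactly the content hidden in the paper's ``mutatis mutandis'' reference to the proof of Thm.~\ref{thm:1Nexpansion-Weingarten-Cumulants}, and is in fact cleaner in the transposition setting than in the general-permutation one, so making it explicit is a genuine (if modest) improvement in rigor rather than a change of method.
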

\begin{proof}
Note that 
\begin{align}
 W_\pi^{(N)}[\bsig ,\btau ] = \chi\big( \Pi(\bsig,\btau)\le \pi \big) \sum_{ \{l_c^B \}_{c,B} \ge 0} \;
\sum_{ \big\{ \mu^{c,B}_{1} ,\ldots,  \mu^{c,B}_{l_c^B }\, \in\, P(\nu_{c|B}, l_c^B) \big\}_{c,B} 
}  \; (-N^{-1})^{\sum_{c,B}l_c^B}  \;,
\end{align}
where $P(\nu, l) $ denotes the set of transpositions with weakly monotone maxima which factorize $\nu$ (Def.~\ref{def:weakly-monotone}).
From this point, the proof is  {\it mutatis mutandis} the same as that of  Thm.~\ref{thm:1Nexpansion-Weingarten-Cumulants}.

\end{proof}

\newpage

\section{Asymptotics of the cumulant Weingarten functions.} 
\label{sub:asympt-cum-weing}

In many applications, such as random tensor models, one is interested in the first place in computing the large-$N$ contribution to the logarithm 
${\cal C}_{D,N}(t, A, B)$ of the tensor HCIZ integral \eqref{eq:HCIZTens1}. For some given $\bsig, \btau\in \bS_n$, one thus needs to identify the smallest integer $l$ such that $p_C[\bsig, \btau; l]$  does not vanish and, if possible, to obtain an explicit expression for the corresponding $p_C$. We provide this  in Thm.~\ref{thm:asympt-cum-weing}. A general combinatorial formula, \eqref{formula-gen-l}, is furthermore derived for $p_C[\bsig, \btau; l]$ for any $l$. To our knowledge this expression for the sub-leading contributions to the cumulant Weingarten functions is new also in $D=1$.

\subsection{Main result} 

The large $N$ behavior of the cumulant Weingarten functions
is captured by the following theorem.

\begin{Th} 
\label{thm:asympt-cum-weing} 

For any $l$, the coefficient $p_C [\bsig, \btau  ;  l ]$ is given by:
\be
\label{eq:formula}
(-1)^l \;p_C [\bsig, \btau  ;  l ] =
\sum_{\substack{{\pi_1\ge \Pi(\nu_1) ,\ \ldots\ ,\  \pi_D\ge \Pi(\nu_D)}\\{\Pi(\bsig, \btau)\vee\pi_1\vee\ldots\vee\pi_D = 1_n } } } \; \sum_{\substack{  l_1,\dots, l_D \ge 0  \\ \sum l_c = l } } \; 
\prod_{c=1}^D
\left[
 \sum_{ \substack{ \{ l_{B_c} \}_{B_c \in \pi_c }\ge 0     \\ \sum_{B_c} l_{B_c}  = l_c } }  \;   
  \prod_{B_c\in \pi_c} \gamma_{l_{B_c} }(\nu_{c|B_c})  \right]  \; ,
\ee 
with $\nu_c = \sigma_c\tau_c^{-1}$ and $\gamma_l(\nu)$ defined in Sec.~\ref{sec:const}.
The smallest value of $l$ such that $p_C [\bsig, \btau  ;  l ]$  does not vanish is:
\be
\label{eq:l-minimal}
\ell(\bsig, \btau)=\sum_{c=1}^D \lVert\sigma_c\tau_c^{-1}\rVert  + 2  \big(\lvert \Pi(\bsig,\btau)\rvert-1 \big) \; .
\ee
In order to simplify the notation we sometimes denote $\ell \equiv \ell(\bsig,\btau)$.
The cumulant Weingarten functions thus have the asymptotic expression:
\begin{equation} 
\label{eq:WC-asym}
W_C^{(N)}[\bsig , \btau ]= N^{-nD-\ell}  \, (-1)^{\ell \;}
p_C [\bsig, \btau ; \ell ]  \, (1+O(N^{-2})) \; ,
\end{equation}
where the leading order  coefficient is:
\be
\label{eq:comb-expr-leading-cum-Weing}
(-1)^{\ell  } \;  \pC = 
\sum_{\substack{{\pi_1\ge \Pi(\nu_1) ,\ \ldots\ ,\  \pi_D\ge \Pi(\nu_D)}\\{\Pi(\bsig, \btau)\vee\pi_1\vee\ldots\vee\pi_D = 1_n }\\{\sum_c (|\Pi(\nu_c)| - \lvert\pi_c\rvert)=\lvert\Pi(\bsig, \btau)\rvert - 1}}} \ 
 \prod_{c=1}^D \prod_{B_c\in \pi_c} \gamma(\nu_{c|B_c}) \;,
\ee
Note that $\nu_{c|B_c}$ is well-defined as $\pi_c \ge \Pi(\nu_c)$. In detail:
\be
\gamma(\nu_{c|B_c}) = \M(\nu_{c|B_c}) \; \frac{( 3 \lvert B_c \rvert - \lVert\nu_{c|B_c} \rVert - 3)!}{(2\lvert B_c \rvert )!}   \,
\prod_{p\ge 1} \big[2p (2p-1)\big]^{d_p(\nu_{c|B_c})}   \; , 
\ee 
with the non-crossing M\"oebius function $\M$ defined in \eqref{eq:Moebius-on-NC}. 

In general for $l>\ell(\bsig,\btau)$, \eqref{eq:formula} can be rewritten as:
\be
\label{formula-gen-l}
\begin{split}
&(-1)^l \;p_C [\bsig, \btau  ;  l ]  = \crcr
& = \sum_{L=0}^{ \tfrac{ l-\ell  } {2} } \sum_{\substack{{\pi_1\ge \Pi(\nu_1) ,\ \ldots\ ,\  \pi_D\ge \Pi(\nu_D)}\\{\Pi(\bsig, \btau)\vee\pi_1\vee\ldots\vee\pi_D = 1_n } \\{\sum_c (|\Pi(\nu_c)| - \lvert\pi_c\rvert)=\lvert\Pi(\bsig, \btau)\rvert + L  - 1}} } \;
 \sum_{\substack{  g_1,\dots, g_D \ge 0  \\ \sum_c g_c = \tfrac{l-\ell }{2} - L } } \; 
\prod_{c=1}^D
\left[
 \sum_{ \substack{ \{ g_{B_c} \}_{B_c \in \pi_c }\ge 0     \\ \sum_{B_c} g_{B_c}  = g_c } }  \;   
  \prod_{B_c\in \pi_c} \gamma_{l(g_{B_c}) }(\nu_{c|B_c})  \right]  \; ,
  \end{split}
\ee 
where $l(g_{B_c}) = \lvert B_c \rvert + \Pi(\nu_{c|B_c}) + 2g_{B_c} - 2$. 

\end{Th}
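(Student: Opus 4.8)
The plan is to start from the signed expansion \eqref{eq:explicit-expr-WC} established in Thm.~\ref{thm:1Nexpansion-Weingarten-Cumulants}, in which $(-1)^l p_C[\bsig,\btau;l]$ is the alternating sum over $D$-uplets of \emph{proper} factorizations $\rho^c_1\cdots\rho^c_{k_c}=\nu_c$ (all factors $\neq\id$, $\nu_c=\sigma_c\tau_c^{-1}$) of total length $l$, subject to the global condition $\Pi(\bsig,\btau)\vee\Pi(\brho)=1_n$. First I would reorganize this sum according to the per-color support partitions $\pi_c:=\Pi(\hat\rho_c)$. Since $\nu_c=\rho^c_1\cdots\rho^c_{k_c}$ forces $\pi_c\ge\Pi(\nu_c)$, and since $\Pi(\brho)=\pi_1\vee\cdots\vee\pi_D$, the transitivity condition becomes exactly $\Pi(\bsig,\btau)\vee\pi_1\vee\cdots\vee\pi_D=1_n$. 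The sign $(-1)^{\sum_c k_c}$ and the count both factorize over colors once the $\pi_c$ and the per-color lengths $l_c$ (with $\sum_c l_c=l$) are fixed, so everything reduces to the single-color identity
\[ \sum_{k}(-1)^{k}\,\#\{\hat\rho:\ \Pi(\hat\rho)=\pi_c,\ \text{proper},\ \rho_1\cdots\rho_k=\nu_c,\ \|\hat\rho\|=l_c\} = \sum_{\sum_{B_c}l_{B_c}=l_c}\ \prod_{B_c\in\pi_c}\gamma_{l_{B_c}}(\nu_{c|B_c}), \]
which is the per-block content of \eqref{eq:formula}.

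The heart of the argument, and the step I expect to be the main obstacle, is this single-color identity. The natural map sends a proper factorization $\hat\rho$ with $\Pi(\hat\rho)=\pi_c$ to the tuple $(\hat\rho^{B_c})_{B_c\in\pi_c}$ obtained by keeping, block by block, the subsequence of non-identity restrictions $\rho_i|_{B_c}$; because $\pi_c$ is the orbit partition, each $\hat\rho^{B_c}$ is a connected proper factorization of $\nu_{c|B_c}$, and lengths are preserved since $\|\rho_i\|=\sum_{B_c}\|\rho_i|_{B_c}\|$. The difficulty is that a single factor $\rho_i\neq\id$ may be non-trivial on several blocks at once, so this map is not a bijection and the signs $(-1)^{k}$ and $(-1)^{\sum_{B_c} j_{B_c}}$ (with $j_{B_c}$ the length of $\hat\rho^{B_c}$) do not match term by term. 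I would resolve this by computing the signed fibre: reconstructing $\hat\rho$ from a fixed tuple $(\hat\rho^{B_c})$ amounts to choosing a sequence of non-empty subsets $S_1,\dots,S_k\subseteq\pi_c$ (the blocks active in each slot) in which block $B_c$ occurs exactly $j_{B_c}$ times, the factor in an active slot being the next unused one of $\hat\rho^{B_c}$. Marking each slot by $-1$ and each use of a block $B_c$ by a variable $x_{B_c}$, the signed count of merges is read off from $\sum_{k\ge0}(-1)^k\big(\prod_{B_c}(1+x_{B_c})-1\big)^k=\prod_{B_c}(1+x_{B_c})^{-1}$, whose coefficient of $\prod_{B_c}x_{B_c}^{j_{B_c}}$ is $\prod_{B_c}(-1)^{j_{B_c}}$. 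Hence each fibre contributes $(-1)^{\sum_{B_c}j_{B_c}}$, and summing over tuples gives exactly $\prod_{B_c}\gamma_{l_{B_c}}(\nu_{c|B_c})$ after recalling $\gamma_{l_{B_c}}(\nu_{c|B_c})=\sum_j(-1)^j\gamma_{l_{B_c}}(\nu_{c|B_c};j)$ from \eqref{eq:gamma_l}. Assembling the colors back yields \eqref{eq:formula}.

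Next I would extract the minimal $l$ and the leading coefficient. Using $\gamma_{l_{B_c}}(\nu_{c|B_c})=0$ below the planar value $2|B_c|-2-\|\nu_{c|B_c}\|$ (no connected constellation of smaller length exists, by \eqref{eq:Euler-constellations-lengths}), the minimal length for a fixed choice of $\{\pi_c\}$ is $\sum_c\big(\|\nu_c\|+2(|\Pi(\nu_c)|-|\pi_c|)\big)$, obtained by summing the per-block minima and using $\sum_{B_c}|B_c|=n$ and $\sum_{B_c}\|\nu_{c|B_c}\|=\|\nu_c\|$. It then remains to minimize the number of merges $\sum_c(|\Pi(\nu_c)|-|\pi_c|)$ over admissible $\{\pi_c\}$. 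For the lower bound I would use the elementary inequality $|\alpha\vee\beta|\ge|\alpha|-(|\gamma|-|\beta|)$ valid whenever $\gamma\le\alpha$ and $\gamma\le\beta$ (each merge of $\beta$ above $\gamma$ drops $|\alpha\vee\beta|$ by at most one), applied iteratively with $\alpha=\Pi(\bsig,\btau)$ and $\gamma=\Pi(\nu_c)\le\Pi(\bsig,\btau)$; the constraint $\Pi(\bsig,\btau)\vee\bigvee_c\pi_c=1_n$ then forces $\sum_c(|\Pi(\nu_c)|-|\pi_c|)\ge|\Pi(\bsig,\btau)|-1$. Achievability is immediate: since $\Pi(\nu_1)\le\Pi(\bsig,\btau)$, every block of $\Pi(\bsig,\btau)$ contains a block of $\Pi(\nu_1)$, so coarsening $\pi_1$ to merge one such representative from each block of $\Pi(\bsig,\btau)$ (keeping $\pi_c=\Pi(\nu_c)$ for $c\ge2$) uses exactly $|\Pi(\bsig,\btau)|-1$ merges and already makes the join $1_n$. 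This gives $\ell(\bsig,\btau)$ as in \eqref{eq:l-minimal}, and restricting \eqref{eq:formula} to the saturating configurations (all blocks planar, so $\gamma_{l_{B_c}}\to\gamma$ of \eqref{eq:BMSCH-alt-sum}, and $\sum_c(|\Pi(\nu_c)|-|\pi_c|)=|\Pi(\bsig,\btau)|-1$) produces the leading coefficient \eqref{eq:comb-expr-leading-cum-Weing}.

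Finally, for general $l>\ell$ I would trade each per-block length $l_{B_c}$ for the genus $g_{B_c}\ge0$ of the corresponding connected constellations through $l_{B_c}=|B_c|+|\Pi(\nu_{c|B_c})|+2g_{B_c}-2$ (equivalently \eqref{eq:Euler-constellations-lengths}), and set $L:=\sum_c(|\Pi(\nu_c)|-|\pi_c|)-(|\Pi(\bsig,\btau)|-1)\ge0$. A direct bookkeeping using $\sum_{B_c}|B_c|=n$ and $\sum_{B_c}|\Pi(\nu_{c|B_c})|=|\Pi(\nu_c)|$ then shows $l-\ell=2\big(L+\sum_c g_c\big)$, which recasts \eqref{eq:formula} as \eqref{formula-gen-l} and, in particular, shows $l-\ell$ is always even; the asymptotic statement \eqref{eq:WC-asym} with its $O(N^{-2})$ correction follows at once from Thm.~\ref{thm:1Nexpansion-Weingarten-Cumulants}.
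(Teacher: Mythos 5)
Your proposal is correct and follows essentially the same route as the paper: the same reorganization of the signed expansion of Thm.~\ref{thm:1Nexpansion-Weingarten-Cumulants} by the support partitions $\pi_c=\Pi(\hat\rho^c)$ and per-color lengths, the same collapse of the alternating sum over $k$ via the generating function $\sum_{k}\bigl[1-\prod_{B}(1+x_B)\bigr]^k=\prod_{B}(1+x_B)^{-1}$ (your ``signed fibre'' count is the paper's $M\leftrightarrow\tilde M$ M\"oebius inversion in different clothing), and the same Euler-characteristic plus excess-edge bookkeeping for the minimal $l$ and for \eqref{formula-gen-l}. The only presentational difference is that you bound $\sum_c(|\Pi(\nu_c)|-|\pi_c|)$ by an elementary partition-join inequality where the paper introduces the incidence graph $G\big[\Pi(\bsig,\btau),\{\pi_c\}_c;\{\Pi(\nu_c)\}_c\big]$ and its excess $L$ -- equivalent content, though the paper's graph is reused for the nodal-surface interpretation of Sec.~\ref{sec:nodal}.
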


\begin{proof}
 See Sec.~\ref{sub:proof-of-the-theorem}.
 
\end{proof}

The sum over partitions appears rather complicated, however it has a simple graphical interpretation in terms of sums of trees. This graphical interpretation was  developed in \cite{Zub-ZJ} in $D=1$ and for $l=\ell(\sigma, \tau)$ and is generalized in this paper to larger $l$ and larger $D$ in Sec.~\ref{sec:nodal} (more precisely Sec.~\ref{sec:nodal-pC}).

\ 

Corollary~\ref{cor:WC-is-W} implies that if $\lvert\Pi(\bsig, \btau)\rvert=1$ then
$(-1)^{\ell}\pC = \prod_{c=1}^D\M(\nu_c)$. We have chosen to factor the M\"oebius functions to render this explicit. This can be obtained directly from \eqref{eq:comb-expr-leading-cum-Weing}: as $\pi_c \ge \Pi(\nu_c)$, we have $\lvert\Pi(\nu_c)\rvert \ge \lvert\pi_c\rvert$ and, from the condition in the sum, $\lvert\Pi(\nu_c)\rvert = \lvert\pi_c\rvert$ so that only $\pi_c = \Pi(\nu_c)$ contributes, and $\nu_{c|B_c}$ is the restriction of $\nu_c$ to one of its cycles:
\[
\frac{( 3 \lvert B_c \rvert - \lVert\nu_{c|B_c} \rVert - 3)!}{(2\lvert B_c \rvert )!}   \,
\prod_{p\ge 1} \big[2p (2p-1)\big]^{d_p(\nu_{c|B_c})}  =1 \;.
\]

The leading contribution at large $N$ to the cumulant is:
\begin{align}
\label{eq:WC-asym2}
 C\bigg( [ \Tr (AUBU^* )]^n \bigg)  =  N^{-nD} \sum_{  \bsig,\btau \, \in \bS_{n} }
& \Tr_{\bsig}(A)  \Tr_{\btau^{-1}}(B) \ N^{ - \sum_{c=1}^D \lVert\sigma_c\tau_c^{-1}\rVert- 2 (\lvert\Pi(\bsig, \btau)\rvert-1 ) }  \\
&  \qquad  (-1)^{\ell} \pC \ (1+O(N^{-2}) ) \; . \nonumber
\end{align}

As a function of the scaling behavior of the trace invariants $\Tr_{\bsig}(A)$ and $\Tr_{\btau^{-1}}(B) $ with $N$, the sum in \eqref{eq:WC-asym2} is dominated by a subset of the terms. For instance if $\Tr_{\bsig}(A) \sim \Tr_{\btau^{-1}}(B) \sim O(1)$ in the limit of large $N$, then the term with $\bsig = \btau$, $\| \Pi(\bsig)\| = 1$ will dominate. If $\Tr_{\bsig}(A) \sim 1 $ but $\Tr_{\btau^{-1}}(B) \sim N^{\sum_c \#(\tau_c)} $, more terms dominate at large $N$.
A detailed study of the possible behaviors of the cumulant, relevant for different applications to physics will be conducted in future work.

\subsection{Proof of Theorem \ref{thm:asympt-cum-weing}} 
\label{sub:proof-of-the-theorem}

The proof of Theorem  \ref{thm:asympt-cum-weing} is divided into four parts:
\begin{itemize}
 \item[-]{\it Derivation of \eqref{eq:formula}.} Although lengthy, this part is straightforward: we compute the sum over $k$ in Theorem \ref{thm:1Nexpansion-Weingarten-Cumulants}. We obtain \eqref{eq:formula} where the right hand side is an alternating sum (with constraints) over constellations. 
 \item[-] {\it Reinterpretation of  $\Pi(\bsig, \btau)\vee\pi_1\vee\ldots\vee\pi_D = 1_n $}. We show that the condition:
\[
\Pi(\bsig, \btau)\vee\pi_1\vee\ldots\vee\pi_D = 1_n \;,
\]
 which constrains the sum over $\{\pi_c\}_c$ in \eqref{eq:formula} is equivalent to requiring that a certain abstract graph, aptly denoted $ G \big[\Pi(\bsig, \btau) , \{\pi_c\}_c ; \Pi(\nu_c) \big] $, is connected.
  \item[-]{\it The lower bound \eqref{eq:l-minimal} on $l$.} We show that the leading order at large $N$ (minimal $l$) in \eqref{eq:formula} fulfills  two conditions:
   \begin{itemize}
       \item the abstract graph $ G \big[\Pi(\bsig, \btau) , \{\pi_c\}_c ; \Pi(\nu_c) \big] $ has minimal number of edges. As it is connected, this means it is a tree.
    \item the constellations are planar.
   \end{itemize}
We will show in Section~\ref{sec:nodal} that these two conditions translate in fact the planarity of a certain nodal surface.

 \item[-]{\it Proof of \eqref{formula-gen-l}.} We reorganize the terms in  \eqref{eq:formula} by the number of excess edges of the abstract graph $ G \big[\Pi(\bsig, \btau) , \{\pi_c\}_c ; \Pi(\nu_c) \big] $ (\emph{i.e.} the number of independent cycles, or loop edges in the physics literature) and by the genera of the constellations to get 
  \eqref{formula-gen-l}.
\end{itemize}

\paragraph{Derivation of \eqref{eq:formula}.}
Our starting point is Theorem \ref{thm:1Nexpansion-Weingarten-Cumulants}, which states that:
\[
 W_C^{(N)}[\bsig, \btau ]= N^{-nD} \sum_{l\ge 0} (-1)^l \; p_C[\bsig,\btau;l] \;N^{-l}
  \; ,\quad (-1)^l \; p_C[\bsig,\btau,l]  = \sum_{k\ge 0} (-1)^k m_C(\bsig,\btau; l, k )  \;,
\] 
where, denoting $\nu_c = \sigma_c\tau_c^{-1}$, we have:
\[
m_C(\bsig,\btau; l, k )  = \sum_{\{k_{c}\}_{c}\ge 0} \; 
\sum_{ \substack{ \{ \rho^{c}_1,\ldots , \rho^{c}_{k_{c}}\,\in\, S_{n}^* , \, \rho^{c}_1\cdots\; \rho^{c}_{k_{c}}=\;\nu_{c}  \}_c  \\
   \sum_{c}\sum_{i_c=1}^{k_{c}}\lVert\rho^{c}_{i_c} \rVert =l  , \; \sum_{c} k_c = k \\
    \Pi(\bsig,\btau) \vee \Pi(\brho) = 1_n } 
 } 1 \; ,
\]
and $\brho$ denotes the $D$-uple of constellations 
$(\hat \rho^1, \dots ,\hat \rho^D)$ where, for $ c\in\{1,\dots, D\}$, the constellation $\hat \rho^c$ is $\hat \rho^c= (\rho^c_1, \dots, \rho^c_{k_c})$. 
We aim to derive the asymptotic behaviour of $ W_C^{(N)}[\bsig, \btau ]$ using the results in Sec.~\ref{sec:const}. Let us classify the terms in the above formula by the values 
$l_c = \sum_{i_c=1}^{k_{c}}\lVert\rho^{c}_{i_c} \rVert$ and by the partitions 
$\pi_c = \Pi(\hat \rho^c) \ge \Pi(\nu_c)$: 
\begin{align}
 m_C(\bsig,\btau; l, k )  & =
  \sum_{ \substack{ \pi_1\ge \Pi(\nu_1) ,\dots, \pi_D \ge \Pi(\nu_D) \\ \Pi(\bsig,\btau) \vee \pi_1\dots \vee \pi_D = 1_n } } \; \;
 \sum_{\substack{  l_1, \dots, l_D\ge 0  \\ \sum_c l_c = l} } \;
\sum_{ \substack{ \{ k_c \}_c \ge 0  \\ \sum_c k_c = k} }  \; \prod_{c=1}^D\, M(\pi_c, \nu_c ; l_c, k_c) \;,
\nonumber
\\
M(\pi_c, \nu_c ; l_c, k_c)  & = 
 \sum_{ 
 \substack{ \rho^{c}_1,\ldots , \rho^{c}_{k_{c}}\,\in\, S_{n}^* , \, \rho^{c}_1\cdots\; \rho^{c}_{k_{c}}=\;\nu_{c}   \\
  \Pi( \hat \rho^c) = \pi_c , \;   \sum_{i_c=1}^{k_{c}}\lVert\rho^{c}_{i_c} \rVert =l_c  } } 1\;.
 \label{eq:M}
\end{align}

We wish to compute 
\begin{align}
\quad (-1)^l \; p_C[\bsig,\btau,l]  &= \sum_{k\ge 0} (-1)^k m_C(\bsig,\btau; l, k )\nonumber  \\
& = \sum_{ \substack{ \pi_1\ge \Pi(\nu_1) ,\dots, \pi_D \ge \Pi(\nu_D) \\ \Pi(\bsig,\btau) \vee \pi_1\dots \vee \pi_D = 1_n } } \; \;
 \sum_{\substack{  l_1, \dots, l_D\ge 0  \\ \sum_c l_c = l} } \;
  \prod_{c=1}^D\, \sum_{  \{ k_c \}_c \ge 0  } (-1)^{k_c}M(\pi_c, \nu_c ; l_c, k_c)\;.
  \label{sum-with-S}
\end{align}

\paragraph{\it The sum $\sum_{k\ge 0} (-1)^k  M(\pi, \nu ; l, k)$.}  Let us focus on the rightmost sum of \eqref{sum-with-S}. We define: 
\begin{align*}
& S[ \pi, \nu ; l] =\sum_{k \ge 0} (-1)^{k}  M ( \pi, \nu ;l,k)  \;, \qquad
  M( \pi , \nu; l, k)  = \sum_{\substack{ \rho_1,\dots, \rho_k \in S_n^*, \rho_1\cdots \rho_k = \nu \\ 
    \sum_i\lVert\rho_i\rVert = l \;,  \Pi(\hat \rho) = \pi 
  }} 1 \; ,  \crcr
 & \qquad \tilde     M( \pi , \nu; l, k)  = \sum_{\substack{ \rho_1,\dots, \rho_k \in S_n, \rho_1\cdots \rho_k = \nu \\ 
    \sum_i\lVert\rho_i\rVert = l \;,  \Pi(\hat \rho) = \pi 
  }} 1 \; , 
\end{align*}
where we emphasize that, contrary to $M$, the sum defining $\tilde M$ includes the case when some of the permutations $\rho_i$ are the identity. $M$ and $\tilde M$ are related by:
\[ 
  \tilde M(\pi,\nu ; l , q) = \sum_{k=0}^q \binom{q}{k}  \; M( \pi , \nu; l, k)  \;, \quad
 M( \pi , \nu; l, k) = \sum_{q = 0}^k \binom{k}{q} (-1)^{k-q} \; \tilde M(\pi , \nu; l , q) \; .
\]
$\tilde M(\pi,\nu ; l , q)$ is a sum over $q$ permutations $\rho_i$. The first equation follows by noting that if exactly $q-k$ out of these permutations are the identity then 
$\tilde M(\pi,\nu ; l , q)$ reduces to $M(\pi,\nu ; l , k)$; the second equation is obtained by inverting the first one.

The point is that $\tilde M$ factors over the blocks of $\pi$. As $\nu$ (respectively $\rho_i$) stabilizes any block $B\in \pi$, it can be decomposed as the product of $|\pi|$ permutations $\nu_{|B}$ 
(respectively $\rho_{i|B}$),  
$\nu =\prod_B \nu_{|B}$ (respectively $\rho_i = \prod_B \rho_{i|B} $),
where we lift trivially $\nu_{|B}$ (respectively $\rho_{i|B}$) to the whole set $\{1,\dots, n\}$. The number of transpositions of $\rho_i$ is distributed among the blocks of $\pi$,
$\lVert\rho_i\rVert = \sum_B \lVert \rho_{i|B}\rVert$ and
we get:
\[
   \sum_{\substack{ \rho_1,\dots, \rho_q \in S_n, \rho_1\cdots \rho_q = \nu \\ 
    \sum_i\lVert\rho_i\rVert = l \;, \Pi(\hat \rho) = \pi  }} 1
    =\sum_{ \substack{ \{ l_B  \}_{B } \ge 0    \\ \sum_{B} l_B = l } }  \; \; \prod_{B\in \pi} \sum_{\substack{ \rho_{1|B}, \dots, \rho_{q|B}  \in S_{|B|} \\
    \rho_{1|B} \cdots \rho_{q|B}  = \nu_{|B } \\ 
    \sum_i\lVert\rho_{i|B} \rVert = l_B \;, \Pi(\hat \rho_{|B}) = 1_{|B|}   }} 1 
     = \sum_{ \substack{ \{ l_B  \}_{B } \ge 0    \\ \sum_{B} l_B = l } }  \; \;    \prod_{B\in \pi} 
     \tilde \gamma_{l_B}(\nu_{|B}; q) \;,
\]
where we recognized the number of connected $q$-constellations with fixed $l_B$, 
$    \tilde \gamma_{l_B}(\nu_{|B};  q)  $ of Sec.~\ref{sec:const}. We emphasize that constellations are not necessarily proper (i.e.~the sums run over $S_{|B|}$ not 
$S_{|B|}^*$). The number of arbitrary (\emph{i.e.}~not necessarily proper) constellations is written  in terms of the number of proper ones as:
\[
  \tilde \gamma_{l_B}(\nu_{|B};  q)  = \sum_{k_B =0}^q \binom{q}{k_B} \gamma_{l_B}(\nu_{|B};  k_B)\; ,
\] 
and substituting, we find:
\begin{equation*}
\begin{split}
& S[ \pi, \nu ; l]   =
\sum_{k \ge 0} \sum_{q=0}^k \binom{k}{q} (-1)^q  \tilde M(\pi, \nu ; l, q) 
  =  \sum_{ \substack{ \{ l_B \}_B \ge 0   \\ \sum_{B} l_B = l } }  \;  \sum_{k \ge  0} \sum_{q=0}^k \binom{k}{q} (-1)^q 
 \prod_{B\in \pi}       \tilde \gamma_{l_B}(\nu_{|B};  q)   \crcr
& \qquad  = \sum_{ \substack{ \{ l_B  \}_B \ge 0   \\ \sum_{B} l_B = l } }  \;  
\sum_{  \{  k_B \}_B \ge  0   }  \; 
 \left(  \prod_{B\in \pi}   \gamma_{l_B}(\nu_{|B};  k_B)  \right)
 \left[ \sum_{k \ge 0 }
\sum_{q=0}^k \binom{k}{q} (-1)^q 
 \prod_{B\in \pi}  \binom{q}{k_B}  \right] \; ,
\end{split}
\end{equation*}
with the convention that ill-defined binomial coefficients (e.g.~$k_B >q$) are zero.
At fixed $k_B$, the sum over $q$ and $k$ can be computed as it is the coefficient of the monomial $ \prod_B x_B^{k_B}$ in the generating function:
\[
 \sum_{k \ge 0 } \sum_{q=0}^k \binom{k}{q} (-1)^q  \prod_{B} 
\left[ \sum_{j_B=0}^q\binom{q}{j_B} x_B^{j_B} \right]
= \sum_{k \ge 0 }  \big[ 1 - \prod_B (1+x_B)\big]^k  = \sum_{ \{ k_B  \}_B  \ge 0}  \prod_B (-1)^{k_B} x_B^{k_B} \; ,
\]
which ultimately leads to:
\[ 
 S[ \pi, \nu ; l]   = \sum_{ \substack{ \{ l_B  \}_B  \ge 0  \\ \sum_{B} l_B = l } }  \;  
\sum_{  \{  k_B \}_B  \ge  0  }  \prod_{B\in \pi}    (-1)^{k_B} \gamma_{l_B}(\nu_{|B} ; k_B)  
     = \sum_{ \substack{ \{ l_B  \}_B  \ge 0  \\ \sum_{B} l_B = l } }  \;   \prod_{B\in \pi} 
     \gamma_{l_B}(\nu_{|B}) \;.
\]
Inserting this expression in \eqref{sum-with-S} achieves the proof of \eqref{eq:formula}:
\[ 
(-1)^l \;p_C [\bsig, \btau  ;  l ] =
\sum_{\substack{{\pi_1\ge \Pi(\nu_1) ,\ \ldots\ ,\  \pi_D\ge \Pi(\nu_D)}\\{\Pi(\bsig, \btau)\vee\pi_1\vee\ldots\vee\pi_D = 1_n } } } \; \sum_{\substack{  l_1,\dots, l_D \ge 0  \\ \sum l_c = l } } \; 
\prod_{c=1}^D
\left[
 \sum_{ \substack{ \{ l_{B_c} \}_{B_c \in \pi_c }\ge 0     \\ \sum_{B_c} l_{B_c}  = l_c } }  \;   
  \prod_{B_c\in \pi_c} \gamma_{l_{B_c}}(\nu_{c|B_c} )  \right]  \; .
\] 

This formula can be analyzed further.
\paragraph{The graph $G[\Pi,\{ \pi_c\}_c ; \{\Pi_c\}_c]$.} 
Consider $2D+1$ partitions $\Pi,\{\pi_c\}_c,\{\Pi_c\}_c $ on $\{1,\dots, n\}$ such that:
\[
\forall c,\quad  \Pi\ge \Pi_c \quad \textrm{and}\quad \pi_c \ge \Pi_c \;.
\]
The relation between these partitions can be encoded in a convenient graphical representation.
Let $G[\Pi,\{ \pi_c\}_c ; \{\Pi_c\}_c]$ be the abstract bipartite graph consisting in:
\begin{itemize}
 \item white vertices associated to the blocks $B$ of the partition $\Pi$,
 \item $c$--colored vertices associated to the blocks $B_c$ of the partitions $\pi_c$
 \item $c$--colored edges associated to the blocks $b_c$ of $\Pi_c$ linking a white and a $c$-colored vertex. The block $b_c$ is  at the same time:
 \begin{itemize}
     \item  contained in a block of $\Pi $, which we denote by $B(b_c)$, as $\Pi\ge \Pi_c$,
     \item  contained in a block of $\pi_c$, which we denote by  $B_c(b_c)$, as $\pi_c \ge \Pi_c$.
 \end{itemize}
The edge corresponding to $b_c$ links $B(b_c)$ to $B_c(b_c)$. 
 
\end{itemize}
 
 \begin{figure}[!h]
\centering
\includegraphics[scale=1]{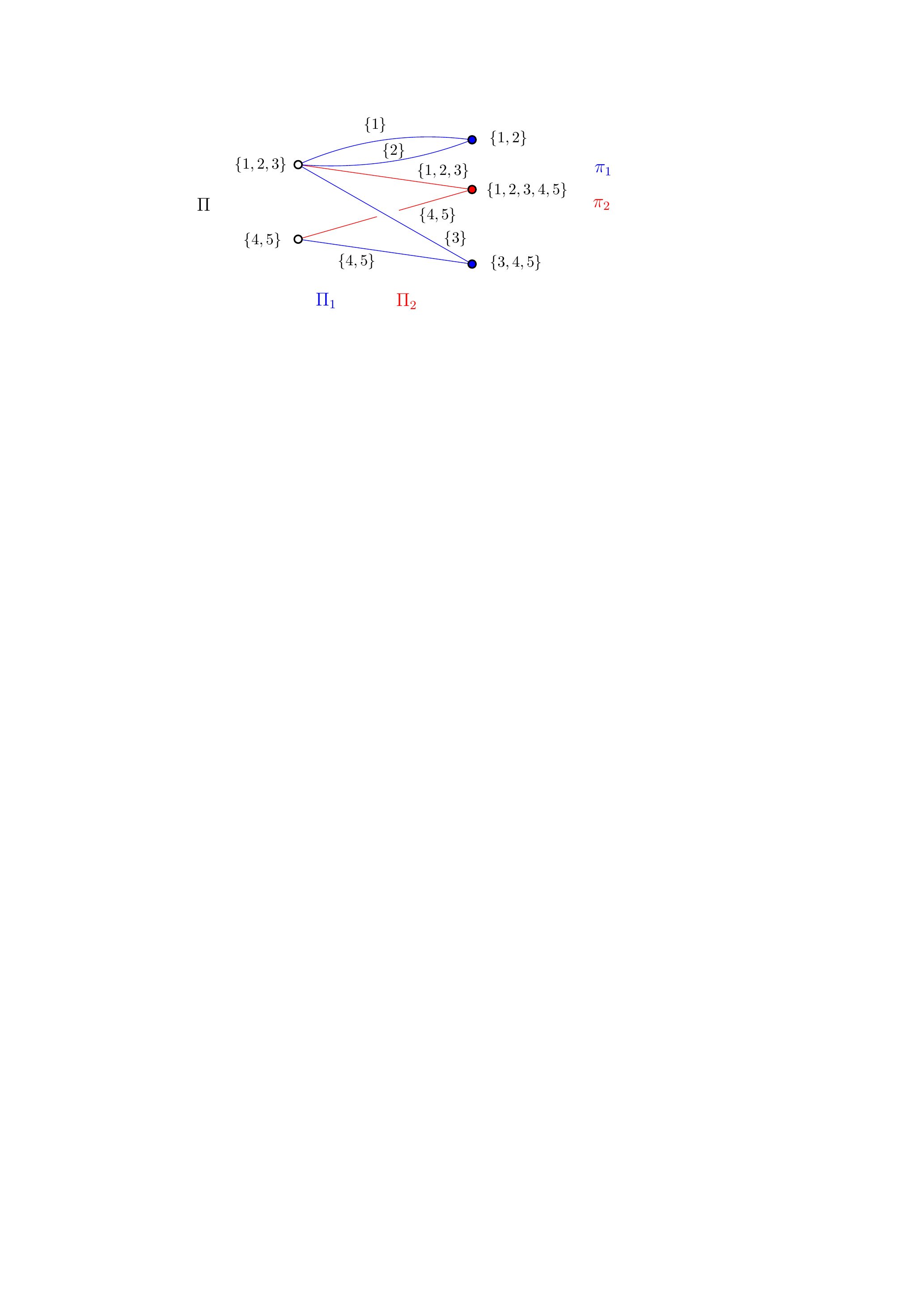} 
\caption{The graph $G[\Pi , \{ \pi_c\}_c ; \{\Pi_c\}_c ]$ for $\Pi=\big\{ \{1,2,3\}, \{4,5\} \big\}$, $\pi_1= \big\{ \{1,2\},\{3,4,5\} \big\} $, 
$\pi_2= \big\{ \{1,2,3,4,5\} \big\} $ and  $\Pi_1= \big\{ \{1\},\{2\},\{3\},\{4,5\} \big\} $, 
$\Pi_2= \big\{ \{1, 2, 3\} , \{4,5\} \big\}$. Here $D=2$ (2 colors), $n=5$.
}
\label{fig:ex-incidence-graph}
\end{figure}

\begin{Lem}\label{lem:lemcon}
The graph $G[\Pi, \{ \pi_c\}_c; \{ \Pi_c\}_c ]$ has $ | \Pi\vee \pi_1 \dots \vee \pi_D|$ connected 
components. 
\end{Lem}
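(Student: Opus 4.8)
The plan is to exhibit an explicit bijection between the connected components of $G[\Pi, \{\pi_c\}_c; \{\Pi_c\}_c]$ and the blocks of the join $J := \Pi \vee \pi_1 \vee \cdots \vee \pi_D$. The starting observation is that each element $s \in \{1,\dots,n\}$ simultaneously selects a white vertex $w(s)$ (its block in $\Pi$), a $c$-colored vertex (its block in $\pi_c$) and a $c$-colored edge (its block in $\Pi_c$), for every color $c$. Since $\Pi \ge \Pi_c$ and $\pi_c \ge \Pi_c$, the $\Pi_c$-block of $s$ is contained in both its $\Pi$-block and its $\pi_c$-block, so the edge selected by $s$ is precisely the edge joining $w(s)$ to the $c$-colored vertex containing $s$. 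In particular, for every $s$ and every $c$, the white vertex and the $c$-colored vertex that contain $s$ are adjacent in $G$.

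Next I would define a map $\Phi$ from the vertex set of $G$ to the blocks of $J$: a white vertex (a $\Pi$-block $B$) is sent to the unique block of $J$ containing $B$, and a $c$-colored vertex (a $\pi_c$-block $B_c$) is sent to the unique block of $J$ containing $B_c$; this is well defined because $\Pi \le J$ and $\pi_c \le J$. If a white vertex $B$ and a $c$-colored vertex $B_c$ are joined by an edge $b_c \in \Pi_c$, then $b_c \subseteq B$ and $b_c \subseteq B_c$, so picking any $s \in b_c$ shows that $B$ and $B_c$ lie in the same block of $J$; hence $\Phi$ takes the same value on the two endpoints of every edge and descends to a map $\bar\Phi$ on connected components. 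Surjectivity of $\bar\Phi$ is immediate, since every block of $J$ is nonempty and any $s$ in it has $w(s)$ mapping to it.

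The crux is injectivity of $\bar\Phi$, which I would establish from the chain characterization of the join, and this is the step that requires care. Every colored vertex is adjacent to some white vertex (take any element it contains and invoke the adjacency from the first paragraph), so each component contains a white vertex, and it suffices to show that two white vertices $w(s), w(s')$ with $s, s'$ in a common block of $J$ lie in the same component. By definition of $J$ there is a chain $s = x_0, x_1, \dots, x_m = s'$ in which each consecutive pair $x_i, x_{i+1}$ lies in a common block of one of $\Pi, \pi_1, \dots, \pi_D$. For a step inside a $\Pi$-block one has $w(x_i) = w(x_{i+1})$, while for a step inside a $\pi_c$-block the two elements share a $c$-colored vertex $v$, and by the first paragraph both $w(x_i)$ and $w(x_{i+1})$ are adjacent to $v$, giving a path $w(x_i)$--$v$--$w(x_{i+1})$. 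Concatenating along the chain connects $w(s)$ to $w(s')$, proving injectivity. With $\bar\Phi$ a bijection, the number of connected components of $G$ equals $|J| = |\Pi \vee \pi_1 \vee \cdots \vee \pi_D|$, as claimed. The only delicate point is the translation between the combinatorial chains witnessing the join and the paths in $G$, which is exactly what the adjacency observation of the first paragraph supplies.
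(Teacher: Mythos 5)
Your proof is correct and follows essentially the same route as the paper: both identify adjacency in $G$ with membership in a common block of the join and then match connected components to join-blocks. You merely spell out more explicitly (via the chain characterization of $\vee$ and the observation that every element $s$ makes its white vertex adjacent to each of its colored vertices) the converse direction that the paper compresses into the remark that both relations are transitive.
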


\proof
Two blocks $B\in \Pi$ and $B_c \in \pi_c$ are connected by an edge in $G[\Pi, \{ \pi_c\}_c; \{ \Pi_c\}_c ]$ if and only if there exists a $b_c\in \Pi_c$ such that $b_c\subset B, B_c$ hence both $B$ and $B_c$ belong to the block of $  \Pi\vee \pi_1 \dots \vee \pi_D$ which contains $b_c$. The Lemma follows by noting that:
\begin{itemize}
 \item[-] belonging to the same connected component of $G[\Pi, \{ \pi_c\}_c; \{ \Pi_c\}_c ]$,
 \item[-] belonging to the same block of the partition $  \Pi\vee \pi_1 \dots \vee \pi_D$,
\end{itemize}
are both transitive relations between the blocks of $\Pi$ and 
$\{\pi_c\}_c$. 

\qed

\paragraph{Lower bound on $l$.}  In order to find a lower bound on $l$, we first rewrite $\sum_c |\pi_c|$.  Observe that in \eqref{eq:formula} we sum over partitions $\{\pi_c\}_c$ such that:
\[
  \forall c \; \pi_c \ge \Pi(\nu_c) \,,\quad  \textrm{and}\quad 
  \Pi(\bsig, \btau) \vee \pi_1 \dots \vee \pi_D = 1_n \;  .
\]
Since $\Pi(\bsig, \btau) \ge \Pi(\nu_c)$, from Lemma \ref{lem:lemcon} we conclude that the sum runs over partitions $\{\pi_c\}_c$ such that the graph $ G \big[\Pi(\bsig, \btau) , \{\pi_c\}_c ; \Pi(\nu_c) \big] $ is connected. 

The graph $ G \big[\Pi(\bsig, \btau) , \{\pi_c\}_c ; \Pi(\nu_c) \big] $ has $  \sum_{c} |\Pi(\nu_c)|$ edges and $ |\Pi(\bsig, \btau) | +  \sum_c |\pi_c| $ vertices. If it is connected then any tree spanning this graph will have exactly $ |\Pi(\bsig, \btau) | +  \sum_c |\pi_c| -1 $ edges. We denote the number of excess edges of  $ G \big[\Pi(\bsig, \btau) , \{\pi_c\}_c ; \{ \Pi(\nu_c) \}_c \big] $, that is the number of edges in the complement of a spanning tree in the graph, by: 
\be
\label{eq:L}
 L \big[ \Pi(\bsig, \btau) , \{\pi_c\}_c ; \{\Pi(\nu_c)\}_c \big] 
 =   \sum_{c} |\Pi(\nu_c)|    -  |\Pi(\bsig, \btau) | -  \sum_c |\pi_c|  + 1 \ge 0 \;.
\ee

Let us consider a term in \eqref{eq:formula}. At fixed $l_{B_c}$, each block $B_c$ of $\pi_c$ contains a sum over constellations with fixed genus as, from \eqref{eq:Euler-constellations-lengths}:
\be\label{eq:truc3}
   l_{B_c} = 2 |B_c| - \lVert \nu_{c|B_c} \rVert + 2g_{B_c} -2 \;, 
  \qquad l_c = 2n  - \lVert \nu_c \lVert + 2 \sum_{B_c\in \pi_c} g_{B_c} - 2 |\pi_c| \;.
\ee
Summing over $c$ and using $ n = \lVert \nu_c \lVert + |\Pi(\nu_c)|$ we get:
\be\label{eq:truc}
l = \sum_c \lVert\nu_c\rVert  + 2 \Bigl( |\Pi(\bsig,\btau)| -1 \Bigr) + 2 \sum_{c=1}^D  \sum_{B_c\in \pi_c} g_{B_c} 
 + 2 L \big[ \Pi(\bsig, \btau) , \{\pi_c\}_c ; \{ \Pi(\nu_c) \}_c\big] \;.
\ee
Thus  $l\ge \ell(\bsig,\btau)$ leading to Eq.~\eqref{eq:l-minimal}, which proves \eqref{eq:WC-asym}. We see that  the bound $l= \ell(\bsig,\btau)$ is attained if and only if $ g_{B_c} =0$ for all $B_c$, and: 
\[  L \big[ \Pi(\bsig, \btau) , \{\pi_c\}_c ; \{ \Pi(\nu_c) \}_c \big] =0\quad  \Rightarrow \quad \sum_{c} |\Pi(\nu_c)|    -  \sum_c |\pi_c|   =  |\Pi(\bsig, \btau) |  -1 \;, \]
which proves \eqref{eq:comb-expr-leading-cum-Weing}.

\begin{Rk}
The condition $g_{B_c}=0$ for all $B_c$ already suggests that the large-$N$ limit corresponds to some type of planarity. It turns out that the additional condition $ L \big[ \Pi(\bsig, \btau) , \{\pi_c\}_c ; \{ \Pi(\nu_c) \}_c \big] =0 $
is also a ``minimal genus'' condition. In Section~\ref{sec:nodal} we will show that the sum in \eqref{eq:formula} can be reinterpreted as a sum over a class of \emph{nodal} surfaces and at leading order at large-$N$ only nodal surfaces of minimal arithmetic genus contribute.  
\end{Rk}

  \paragraph{Formula for $l>\ell \equiv \ell(\bsig, \btau)$.} More generally for non-minimal $l$, we can organize the sum in \eqref{eq:formula} by the number of excess edges of the graph $ G \big[\Pi(\bsig, \btau) , \{\pi_c\}_c ; \{ \Pi(\nu_c) \}_c \big] $. Writing \eqref{eq:truc} as:
  \be\label{eq:truc2}
\frac{l - \ell}2 =  \sum_{c=1}^D  \sum_{B_c\in \pi_c} g_{B_c} 
 +  L \big[ \Pi(\bsig, \btau) , \{\pi_c\}_c ; \{ \Pi(\nu_c) \}_c\big] \;,
 \ee
we reorganize \eqref{eq:formula} as:
\[ 
(-1)^l \;p_C [\bsig, \btau  ;  l ] =
\sum_{\substack{{\pi_1\ge \Pi(\nu_1) ,\ \ldots\ ,\  \pi_D\ge \Pi(\nu_D)}\\{\Pi(\bsig, \btau)\vee\pi_1\vee\ldots\vee\pi_D = 1_n } } } \; \sum_{\substack{  l_1,\dots, l_D \ge 0  \\ \sum l_c = l } } \; 
\prod_{c=1}^D
\left[
 \sum_{ \substack{ \{ l_{B_c} \}_{B_c \in \pi_c }\ge 0     \\ \sum_{B_c} l_{B_c}  = l_c } }  \;   
  \prod_{B_c\in \pi_c} \gamma_{l_{B_c}}(\nu_{c|B_c} )  \right]  \; ,
\] 
by replacing the sum over the partitions $\{\pi_c\}_c$ with a sum over $\{\pi_c\}_c$ such that $L = \sum_c \big( |\Pi(\nu_c)| - |\pi_c| \big)- \Pi(\bsig,\btau) +1 $ is fixed and a sum over $L$ from its minimal allowed value $0$ to the maximal allowed value $(l-\ell)/2$ fixed by \eqref{eq:truc2}. Then at fixed $L$, 
we use \eqref{eq:truc3} and trade the sums over 
$\{l_c\}_c, \sum_{c}l_c=l$ with sums over the genera $g_{B_c}\ge 0$ constrained by \eqref{eq:truc3} to obey:
\[
\sum_{c=1}^D  \sum_{B_c\in \pi_c} g_{B_c}  = \frac{ l -\ell}{2}  - L \; .
\]
At the end of the day, we get:
\[
\begin{split}
&(-1)^l \;p_C [\bsig, \btau  ;  l ]  = \crcr
& = \sum_{L=0}^{ \tfrac{ l-\ell  } {2} } \sum_{\substack{{\pi_1\ge \Pi(\nu_1) ,\ \ldots\ ,\  \pi_D\ge \Pi(\nu_D)}\\{\Pi(\bsig, \btau)\vee\pi_1\vee\ldots\vee\pi_D = 1_n } \\{\sum_c (|\Pi(\nu_c)| - \lvert\pi_c\rvert)=\lvert\Pi(\bsig, \btau)\rvert + L  - 1}} } \;
 \sum_{\substack{  g_1,\dots, g_D \ge 0  \\ \sum_c g_c = \tfrac{l-\ell }{2} - L } } \; 
\prod_{c=1}^D
\left[
 \sum_{ \substack{ \{ g_{B_c} \}_{B_c \in \pi_c }\ge 0     \\ \sum_{B_c} g_{B_c}  = g_c } }  \;   
  \prod_{B_c\in \pi_c} \gamma_{l(g_{B_c}) }(\nu_{c|B_c})  \right]  \; ,
  \end{split}
\]
where $l(g_{B_c}) = \lvert B_c \rvert + \Pi(\nu_{c|B_c}) + 2g_{B_c} - 2$, that is \eqref{formula-gen-l}.

\

This completes the proof of Thm.~\ref{thm:asympt-cum-weing}.
 
\qed

\ 

\begin{Cor} 
\label{cor:minimal-l-non-fixed-sigtau}
For $D=1$, consider all the possible $\sigma,\tau$ with fixed numbers of cycles $\#(\sigma)$ and $\#(\tau)$. Consider an arbitrary $k\ge 0$ and a $k$-constellation as in Thm.~\ref{thm:1Nexpansion-Weingarten-Cumulants}, with $l=\#(\sigma) + \#(\tau) -2$. Then $g(\sigma, \tau^{-1})=0$ (as a bipartite map), and $l=\ell(\sigma, \tau)$.
\end{Cor}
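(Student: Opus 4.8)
The plan is to compute $\ell(\sigma,\tau)$ explicitly in terms of the prescribed data $\#(\sigma)$ and $\#(\tau)$ together with the genus $g(\sigma,\tau^{-1})$ of the bipartite map, and then to sandwich the given value $l=\#(\sigma)+\#(\tau)-2$ between $\ell(\sigma,\tau)$ from below and from above, forcing equality and $g(\sigma,\tau^{-1})=0$ simultaneously.

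First I would apply the Euler characteristic relation \eqref{eq:Euler-constellations} to the $2$-constellation $(\sigma,\tau^{-1})$, whose product is $\nu=\sigma\tau^{-1}$. Using $\#(\tau^{-1})=\#(\tau)$ and the fact that $\sigma,\tau^{-1}$ generate the same group as $\sigma,\tau$ (so $|\Pi(\sigma,\tau^{-1})|=|\Pi(\sigma,\tau)|$), together with the identity \eqref{eq:TransCyc} in the form $\#(\nu)=n-\lVert\nu\rVert$, this gives
\[
\#(\sigma)+\#(\tau)-\lVert\nu\rVert = 2|\Pi(\sigma,\tau)| - 2g(\sigma,\tau^{-1}).
\]
Comparing with the definition \eqref{eq:l-minimal} of $\ell(\sigma,\tau)$ for $D=1$, namely $\ell(\sigma,\tau)=\lVert\nu\rVert+2(|\Pi(\sigma,\tau)|-1)$, one eliminates $\lVert\nu\rVert$ and $|\Pi(\sigma,\tau)|$ to obtain the clean relation
\[
\ell(\sigma,\tau)=\#(\sigma)+\#(\tau)-2+2g(\sigma,\tau^{-1}).
\]

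Next I would invoke the existence of the given $k$-constellation. Specializing \eqref{eq:truc} to $D=1$ (with $\pi=\Pi(\hat\rho)$ the partition induced by the constellation and the $g_B$ the genera of its connected components) shows that any admissible factorization satisfies $l=\ell(\sigma,\tau)+2L+2\sum_{B}g_B$ with $L\ge 0$ and $g_B\ge 0$; in particular $l\ge\ell(\sigma,\tau)$. Combining with the displayed relation, the hypothesis $l=\#(\sigma)+\#(\tau)-2$ yields
\[
\#(\sigma)+\#(\tau)-2 \;=\; l \;\ge\; \ell(\sigma,\tau)\;=\;\#(\sigma)+\#(\tau)-2+2g(\sigma,\tau^{-1}),
\]
so that $g(\sigma,\tau^{-1})\le 0$. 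Since the genus is nonnegative this forces $g(\sigma,\tau^{-1})=0$, whence $\ell(\sigma,\tau)=\#(\sigma)+\#(\tau)-2=l$, which are precisely the two assertions of the corollary.

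The computation is essentially bookkeeping, so there is no serious analytic obstacle; the only points demanding care are the two invariance identities $\#(\tau^{-1})=\#(\tau)$ and $|\Pi(\sigma,\tau^{-1})|=|\Pi(\sigma,\tau)|$ (which let one pass between the bipartite map $(\sigma,\tau^{-1})$ and the data $(\sigma,\tau)$), and the observation that one does not need the full strength of Theorem \ref{thm:asympt-cum-weing}: the mere existence of one admissible constellation already supplies the inequality $l\ge\ell(\sigma,\tau)$ through \eqref{eq:truc}, and it is the nonnegativity of $g(\sigma,\tau^{-1})$ that provides the matching reverse inequality.
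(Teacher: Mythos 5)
Your proof is correct and follows essentially the same route as the paper: both substitute the Euler relation of the bipartite map $(\sigma,\tau^{-1})$, namely $\lVert\sigma\tau^{-1}\rVert=\#(\sigma)+\#(\tau)+2g(\sigma,\tau^{-1})-2\lvert\Pi(\sigma,\tau)\rvert$, into \eqref{eq:truc} to obtain $l=\#(\sigma)+\#(\tau)-2+2\bigl(g(\sigma,\tau^{-1})+\sum_B g_B+L\bigr)$ and conclude by nonnegativity of each term. The only cosmetic difference is that you first isolate the identity $\ell(\sigma,\tau)=\#(\sigma)+\#(\tau)-2+2g(\sigma,\tau^{-1})$ and then sandwich, whereas the paper writes the combined identity \eqref{eq-l-D1-cycles} and leaves the final deduction implicit.
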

\proof 
For $D=1$, denoting $\pi=\Pi(\hat \rho)$,  \eqref{eq:truc} becomes:
\[
  l = \ell(\sigma,\tau)  + 2 \sum_{B\in \pi} g_B +  2 L \big[ \Pi(\sigma, \tau) , \pi ; \Pi(\nu) \big] \;, \;\;   \ell(\sigma,\tau) =  \lVert\sigma \tau^{-1}\rVert +  2 \big( \lvert\Pi(\sigma,\tau)\rvert -1 \big) \; .
\]
The Euler relation of the 2-constellation $(\sigma, \tau^{-1})$ is written  $\lVert \sigma\tau^{-1}\rVert = \#(\sigma) + \#(\tau) + 2g(\sigma, \tau^{-1}) - 2\lvert\Pi(\sigma, \tau)\rvert$, so that the equation above becomes:
\be
\label{eq-l-D1-cycles}
l =   \#(\sigma) + \#(\tau) -2 + 2\Bigl(g(\sigma, \tau^{-1})  +  \sum_{B\in \pi} g_B
+L \big[ \Pi(\sigma, \tau) , \pi ; \Pi(\nu) \big]  \Bigr)\;.
\ee

 \qed

\newpage
\subsection{Monotone Hurwitz numbers and their generalization}
\label{sub:Hurwitz}

In $D=1$, monotone double Hurwitz numbers are obtained by suming the coefficients $p_C[\sigma, \tau; l]$ for $\sigma, \tau$ of fixed cycle types. These numbers have an interpretation in enumerative geometry. We  detail some known facts about monotone Hurwitz numbers, and review the results of our paper in this context. A generalization of monotone double Hurwitz numbers is then introduced, using the coefficients $p_C$ for $D>1$.

\paragraph{Monotone double Hurwitz numbers.} The definition of the number $p_C[\bsig, \btau ; l]$ in Prop.~\ref{def:PC} recalls for $D=1$ the combinatorial definition of monotone Hurwitz numbers. It is known that for $D=1$ and $A$ and $B$ having asymptotic traces of order $N$, the HCIZ integral has the expansion \cite{Goulden1, Goulden2}:
\be
\label{eqref:HCIZ-Hurwitz-double}
 \frac 1 {N^2} \log I_{1, N} (t, A, B) = 
  \sum_{n= 1}^N \frac {t^n}{n!} \sum_{h\ge 0} \frac 1 {N^{2h}} \sum_{\alpha, \beta \vdash n} (-1)^{\#( \alpha) + 
\#(\beta)} \vec H_h(\alpha, \beta) \frac{\Tr_\alpha(A)}
{N^{ \#( \alpha ) }} \frac{\Tr_\beta(B)}{N^{ \#(  \beta )}} + O(t^{N+1})\;,
\ee
where $\alpha, \beta$ are partitions of the integer $n$, $n=\sum_p p\cdot  d_p(\alpha)$, where $d_p(\alpha)$ is the number of parts of $\alpha$ of size $p$, and $\#(\alpha) = \sum_p d_p(\alpha)$ denotes the total number of parts of $\alpha$.

Denoting $C_\alpha$ the set of permutations having fixed cycle type $\alpha$, the coefficients:
\be
\label{eq:def_Hurw_num}
\vec H_{ h }(\alpha, \beta) = \sum_{\sigma \in C_\alpha}\sum_{ \tau \in C_\beta} p_C[\sigma, \tau, l]\;, \qquad 
l= \# (\alpha ) + \# ( \beta ) + 2 h  - 2 \;,
\ee
are the genus-$h$ \emph{monotone double Hurwitz numbers}. In detail these numbers count the number $l+2$-uplets of permutations $\sigma, \tau, \mu_1, \ldots, \mu_l$ with $\sigma\in C_\alpha, \tau\in C_\beta$, and $\mu_i$ transpositions with weakly monotone maxima (Def.~\ref{def:weakly-monotone}) such that $\sigma=\mu_1 \cdots \mu_l \tau$ and furthermore the group generated by $\sigma$ and all the transpositions acts transitively on $\{1,\ldots, n\}$. 

Divided by $n!$, these numbers also count weighted branched coverings of the Riemann sphere by a surface of genus $h$ with $l+2$ branch points, $l$ of which have simple ramifications (that is they have $n-1$ preimages), and the ramifications profiles at 0 and infinity are given respectively by the partitions $\alpha$ and $\beta$ (for more details, see Sec.~\ref{sub:Coverings}). The condition that the transpositions have weakly monotone maxima restricts the admissible coverings.  The formula relating $l$ (the number of simple branch points) and $h$ is the well known Riemann-Hurwitz formula. Note that if $\sigma=\mu_1 \cdots \mu_l \tau$ and the group generated by $\sigma$ and all the transpositions acts transitively on $\{1,\ldots, n\}$, then applying the Euler characteristics formula \eqref{eq:Euler-constellations} to the constellations $(\sigma, \tau^{-1})$, $(\mu_1, \ldots, \mu_l)$, and $(\sigma, \mu_1, \ldots, \mu_l)$ (the last one is connected):
\begin{equation}
h = g(\sigma, \mu_1, \ldots, \mu_l)\; = 
g(\sigma, \tau^{-1})  +  g(\mu_{1}, \ldots, \mu_{l})  + L\bigl[\Pi(\sigma, \tau),  \pi; \Pi(\sigma\tau^{-1})\bigr] \;,
\end{equation}
where $\pi=\lvert \Pi(\mu_{1}, \ldots, \mu_{l})\rvert$, and we recall that $L$ is given by Eq.~\eqref{eq:L}. 

The value of $l$ for $h=0$ is fixed by the Riemann-Hurwitz formula as $l= \#(\alpha) + \#(\beta) - 2 = \#(\sigma) + \#(\tau) - 2 $ for any $(\sigma, \tau) \in C_\alpha\times C_\beta$. From Corollary~\ref{cor:minimal-l-non-fixed-sigtau}, this restricts the sum in Eq.~\eqref{eq:def_Hurw_num} to $\sigma, \tau$ satisfying $g(\sigma, \tau^{-1})=0$ and $l=\ell(\sigma , \tau)$ so that: 
\begin{equation}
\vec H_0(\alpha, \beta) = \sum_{\substack{{(\sigma, \tau) \in C_\alpha\times C_\beta}\\{g(\sigma, \tau^{-1})=0}}} p_C[\sigma, \tau,\ell]\;, \qquad  \ell=\#(\alpha) + \#(\beta) - 2\;,
\end{equation}
and we can use Thm.~\ref{thm:asympt-cum-weing} in $D=1$ (see also \cite{Collins03}) to express $\vec H_0(\alpha, \beta)$ as: 
\be
\label{eq:double-Hurwitz-g0}
\begin{split}
& \vec H_0(\alpha, \beta)  = \crcr
& = \sum_{\substack{{(\sigma, \tau) \in C_\alpha\times C_\beta}\\{g(\sigma, \tau^{-1})=0}}}\;  \prod_{p=1}^n \Biggl(\frac {(2p)!}{p!(p-1)!}\Biggr)^{d_p(\sigma\tau^{-1})} \hspace{-0.6cm}  \sum_{\substack{{\pi\ge \Pi(\nu) }\\{\Pi(\sigma, \tau)\vee\pi = 1_n }\\{\#(\nu) - \lvert\pi\rvert=\lvert\Pi(\sigma, \tau)\rvert - 1}}} \  \prod_{B\in \pi }  \frac{(2\lvert B \rvert + \#({{\nu}_{\lvert_B}}) - 3)!}{(2\lvert B \rvert )!}\;.
\end{split}
\ee
Unlike for single Hurwitz numbers (see \eqref{eq:single-Hurwitz-def} below), one cannot eliminate the sum over permutations, since both the number of connected components and the genus of $(\sigma, \tau^{-1})$ depend on the specific representatives $(\sigma, \tau) \in C_\alpha\times C_\beta$ and not only on their conjugacy classes $\alpha$ and $\beta$.\footnote{For instance, $\bigl((12)(3)(4) \, , \, (12)(3)(4)\bigr)$ and $\bigl((1)(2)(34) \, , \, (12)(3)(4)\bigr)$  have respectively 3 and 2 connected components, while $\bigl((123) \, , \, (321)\bigr)$ and $\bigl((123) \, , \, (312)\bigr)$ have respectively genus 0 and 1.}

\paragraph{Monotone single Hurwitz numbers.} Single Hurwitz numbers are obtained when taking $\tau=\id_n$ (or similarly for $\sigma$) above:
\be
\label{eq:single-Hurwitz-def}
\vec H_h(\alpha) = \sum_{\sigma\in C_\alpha} p_C[\sigma, \id_n ; l] = \lvert C_\alpha \rvert \times p_C[\alpha, \id_n ; l]\;  , \qquad l= \#(\alpha) + n + 2 h - 2\;,
\ee
where we have used the fact that $p_C[\nu, \id; l] $ depends only on the partition $\alpha$ induced by the cycles of $\nu$ and not on the specific representative $\nu\in C_{\alpha}$,
since $m_C(\nu, \id, l, k)$ is invariant under conjugation
(note that we have introduced the somewhat abusive notation $p_C[\alpha, \id_n ; l]$). The cardinal of $C_\alpha$ is
$\lvert C_\alpha \rvert = n! / ( \prod_{p\ge 1}  p^{d_p(\alpha)} d_p(\alpha) ! )  $. Similarly to \eqref{eqref:HCIZ-Hurwitz-double}, these numbers are obtained from the HCIZ integral in $D=1$, but in the case when the asymptotic moments of the matrix $B$ are degenerate, $\lim_{N\rightarrow \infty} \frac 1 N \Tr(B^k)=\delta_{1,k}$ \cite{Goulden2}.

Combining Eq.~\eqref{eq:gamma-and-pC-D1} and 
Eq.~\eqref{eq:single-Hurwitz-def} we get that for any $h \ge 0$ and $\alpha \vdash n$, {\it the genus-$h$ monotone single Hurwitz numbers are, up to signs, the numbers of connected proper constellations with faces $\rho_1\cdots \rho_k =\nu\in C_{\alpha}$, and $\sum_{i=1}^k\lVert\rho_i\rVert = l$ for some $\nu \in C_{\alpha}$:}
\be 
\label{eq:Simple-Hurw-gamma}
\vec H_h(\alpha) =(-1)^{\#(\nu) + n}\; \lvert C_\alpha \rvert \; \gamma_{\#(\nu) + n + 2h - 2}(\nu)\; ,
\ee 
and this is independent of the representative $\nu\in C_\alpha$ chosen. Like the double numbere, they also count 
branched coverings of the sphere and requiring the covering to be of genus 0 comes to requiring that $l= \#(\sigma) + n - 2$, that is its minimal possible value $\ell(\sigma, \mathrm{id})$.
Fixing  $\tau=\id_n$ in \eqref{eq:double-Hurwitz-g0}, as $\Pi(\nu)=\Pi(\sigma, \id)=\Pi(\sigma)$, the sum restricts to $\pi=1_n$ and:
\be
\label{eq:mono-hur-gen-0}
\vec H_0(\alpha) = \frac{n!}{\prod_{p\ge 1} d_p(\alpha) !} \frac{(2n + \#(\alpha) - 3)!}{(2n)!} \prod_{p=1}^n \binom {2p}{p}^{d_p(\alpha)}\;  ,
\ee
reproducing the value found in \cite{Goulden2, Goulden3}. 

The expressions for single Hurwitz numbers \eqref{eq:mono-hur-gen-0} were first obtained in \cite{Collins03} as sums over permutations, that is $p_C$ expressed\footnote{The expressions for $p_C[\sigma, \tau; \ell]$ and $p_C[\sigma, \id; \ell] = \vec H_0(\alpha) /  \lvert C_\alpha \rvert$, where $\alpha$ is the partition of $n$ induced by the disjoint cycles of $\sigma$ are found in Thm.~2.15, respectively  (iii) and (i) of \cite{Collins03}. See also \cite{Harnad}, Eq~(5.41).} as in Thm.~\ref{thm:1Nexpansion-Weingarten-Cumulants}
and then evaluated for zero genus using the counting of planar constellations of Bousquet-Melou -- Schaeffer \cite{BM-Schaeff}. Higher genus monotone single Hurwitz numbers count higher genus constellations, but for now there is no closed simple formula for them. Single and double monotone Hurwitz numbers were later studied in \cite{Goulden1, Goulden2, Goulden3, Goulden4}. To our knowledge, result of \cite{Collins03} leading to Eq.~\eqref{eq:double-Hurwitz-g0} is the only explicit expression of monotone double Hurwitz numbers of genus 0.

\paragraph{Double numbers in terms of the single ones.}  Note that equation Eq.~\eqref{eq:double-Hurwitz-g0} expresses monotone double Hurwitz numbers of genus zero in terms of monotone single Hurwitz numbers:: 
\begin{equation}
\vec H_0(\alpha, \beta)  = \sum_{\substack{{(\sigma, \tau) \in C_\alpha\times C_\beta}\\{g(\sigma, \tau)=0}}}\;  \hspace{-0.4cm}  \sum_{\substack{{\pi\ge \Pi(\nu) }\\{\Pi(\sigma, \tau)\vee\pi = 1_n }\\{\#(\nu) - \lvert\pi\rvert=\lvert\Pi(\sigma, \tau)\rvert - 1}}} \  \prod_{B\in \pi} \frac {\vec H_0(c({\nu}_{B}))} {\lvert C_{c({\nu}_{B})}\rvert}\; ,
\end{equation}
where $\nu=\sigma\tau^{-1}$, and we denoted by $c({\nu}_{B})$ the cycle type of ${\sigma}_{B}{\tau}_{B}^{-1}$ (the associated partition of the integer $\lvert B \rvert$). 
More generally, using Eq.~\eqref{formula-gen-l} for $D=1$ and Eq.~\eqref{eq:Simple-Hurw-gamma}, as well as the fact that if $l$ is given by the Riemann-Hurwitz formula \eqref{eq:def_Hurw_num}, then $\frac{l - \ell(\sigma, \tau)}2 = h - g(\sigma, \tau^{-1}) $  we get:
\begin{Th} 
\label{thm:double-hurwitz-from-single}
For any $h\ge 0$ and $\alpha, \beta \vdash n$, the genus-$h$ monotone double Hurwitz numbers are expressed in terms of the single ones as:
\[
\vec H_h(\alpha, \beta)  = \sum_{\substack{{(\sigma, \tau) \in C_\alpha\times C_\beta}}}\;  \hspace{-0.4cm}  \sum_{L=0}^{h - g(\sigma, \tau^{-1})
} \hspace{-0.4cm}
\sum_{\substack{{\pi\ge \Pi(\nu) }\\{\Pi(\sigma, \tau)\vee\pi = 1_n }\\{\#(\nu) - \lvert\pi\rvert=\lvert\Pi(\sigma, \tau)\rvert + L - 1}}} \ \sum_{ \substack{ \{ g_{B} \}_{B \in \pi }\ge 0     \\ \sum_{B} g_{B}  =  
h - g(\sigma, \tau^{-1})- L} }  \prod_{B\in \pi} \frac {\vec H_{g_B}(c({\nu}_{B}))} {\lvert C_{c({\nu}_{B})}\rvert}\; .
\]
\end{Th}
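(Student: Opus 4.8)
The plan is to assemble the statement from three ingredients that are already in hand: the definition \eqref{eq:def_Hurw_num} of $\vec H_h(\alpha,\beta)$ as the sum of $p_C[\sigma,\tau;l]$ over representatives $(\sigma,\tau)\in C_\alpha\times C_\beta$ at the Riemann--Hurwitz value $l=\#(\alpha)+\#(\beta)+2h-2$; the general coefficient formula \eqref{formula-gen-l} specialized to $D=1$; and the identity \eqref{eq:Simple-Hurw-gamma} that rewrites the alternating constellation count $\gamma_l(\nu)$ in terms of single Hurwitz numbers. Once these are lined up, the proof is a substitution, and the only genuine work is the bookkeeping of the indices $l,\ell,h,g$ and of two independent sign factors.

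First I would set $D=1$ in \eqref{formula-gen-l}. With a single color the products $\prod_{c=1}^D$ collapse, $\pi_1\equiv\pi$, $\nu_1\equiv\nu=\sigma\tau^{-1}$, and one is left with a sum over $L$, over admissible $\pi$, and over block genera $\{g_B\}_{B\in\pi}$ of $\prod_{B\in\pi}\gamma_{l(g_B)}(\nu_{|B})$ with $l(g_B)=|B|+\#(\nu_{|B})+2g_B-2$. Next I would convert the ranges. Combining the definition \eqref{eq:l-minimal} of $\ell(\sigma,\tau)$ with the Euler relation of the $2$-constellation $(\sigma,\tau^{-1})$, namely $\lVert\sigma\tau^{-1}\rVert=\#(\sigma)+\#(\tau)+2g(\sigma,\tau^{-1})-2\lvert\Pi(\sigma,\tau)\rvert$, gives $\ell(\sigma,\tau)=\#(\sigma)+\#(\tau)+2g(\sigma,\tau^{-1})-2$. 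Since $l=\#(\sigma)+\#(\tau)+2h-2$, this yields $\tfrac{l-\ell}{2}=h-g(\sigma,\tau^{-1})$, so the outer sum runs $L=0,\dots,h-g(\sigma,\tau^{-1})$ and the block genera satisfy $\sum_B g_B=h-g(\sigma,\tau^{-1})-L$, exactly the two constraints appearing in the statement.

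Then I would replace each $\gamma_{l(g_B)}(\nu_{|B})$ by a single Hurwitz number. The restriction $\nu_{|B}$ is a permutation of the $|B|$ elements of $B$ of cycle type $c(\nu_B)$, a partition of $|B|$; substituting $n\mapsto|B|$, $h\mapsto g_B$, $\alpha\mapsto c(\nu_B)$ in \eqref{eq:Simple-Hurw-gamma}, the index matches since $\#(\nu_{|B})+|B|+2g_B-2=l(g_B)$, and inverting \eqref{eq:Simple-Hurw-gamma} gives $\gamma_{l(g_B)}(\nu_{|B})=(-1)^{\#(\nu_{|B})+|B|}\,\vec H_{g_B}(c(\nu_B))/\lvert C_{c(\nu_B)}\rvert$. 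The two sign computations then reconcile: using $\sum_B|B|=n$ and $\sum_B\#(\nu_{|B})=\#(\nu)$, the product of block signs is $(-1)^{\#(\nu)+n}=(-1)^{\lVert\nu\rVert}$ (because $\#(\nu)=n-\lVert\nu\rVert$), which by the same Euler relation equals $(-1)^{\#(\sigma)+\#(\tau)}=(-1)^l$. Hence the global $(-1)^l$ on the left of \eqref{formula-gen-l} cancels against the product of block signs, and summing over $(\sigma,\tau)\in C_\alpha\times C_\beta$ via \eqref{eq:def_Hurw_num} reproduces the claimed expression verbatim.

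The main obstacle is therefore not conceptual but purely a matter of not dropping a sign or misaligning an index: one must verify simultaneously that the $D=1$ reduction of \eqref{formula-gen-l} is exact, that $l(g_B)$ matches the single-Hurwitz index in \eqref{eq:Simple-Hurw-gamma}, and that the global sign $(-1)^l$ and the product of block signs $(-1)^{\#(\nu)+n}$ coincide through the Euler relation for $(\sigma,\tau^{-1})$. No new combinatorial enumeration is required beyond Thm.~\ref{thm:asympt-cum-weing} and the identity \eqref{eq:Simple-Hurw-gamma}.
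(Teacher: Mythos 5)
Your proposal is correct and follows essentially the same route as the paper, which obtains the theorem by specializing \eqref{formula-gen-l} to $D=1$, substituting \eqref{eq:Simple-Hurw-gamma} for each $\gamma_{l(g_B)}(\nu_{|B})$, and using $\tfrac{l-\ell(\sigma,\tau)}{2}=h-g(\sigma,\tau^{-1})$ together with the definition \eqref{eq:def_Hurw_num}. Your explicit verification that the block signs $(-1)^{\#(\nu)+n}=(-1)^{\lVert\nu\rVert}$ cancel the global $(-1)^l$ via the Euler relation for $(\sigma,\tau^{-1})$ is exactly the bookkeeping the paper leaves implicit, and it is carried out correctly.
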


The sum over partitions can be interpreted as a sum over all ways to add nodes to the 2-constellation $(\sigma, \tau^{-1})$ with a weight per node given by monotone Hurwitz numbers, as explained in \cite{Zub-ZJ} for $h=0$ and in Sec.~\ref{sec:nodal} below for $h>0$.

We thus get an expression for monotone double Hurwitz numbers of genus $h$ for partitions of $n$ in terms of the monotone single Hurwitz numbers of genus $\le h$, for partitions of integers $\le n$ (see e.g.~\cite{Harnad, single-1, single-3, single-2}). We do not know if this relation is already known in the literature.

\paragraph{Higher order monotone Hurwitz numbers.} The tensor generalization of the HCIZ integral naturally gives rise to the following generalization of monotone Hurwitz numbers:
\be
\label{eq:col-hurwitz-def}
\vec H^l(\alpha_1, \beta_1, \ldots, \alpha_D, \beta_D) =  \sum_{ \{ (\sigma_c, \tau_c)\, \in\, C_{\alpha_c}\times C_{\beta_c} \}_{c=1,\dots D}} p_C[\bsig, \btau ; l]\;, 
\ee
where the combinatorial definition of $p_C[\bsig, \btau ; l]$ in terms of factorizations of a $D$-uplet of permutations (Prop.~\ref{def:PC}) generalizes the combinatorial definition of monotone double Hurwitz numbers. Just as in  Thm.~\ref{thm:double-hurwitz-from-single}, $\pC$ can be expressed as a sum over partitions of products of monotone single Hurwitz numbers by using Thm.~\ref{thm:asympt-cum-weing} and \eqref{eq:Simple-Hurw-gamma}.

For each color $c=1,\dots D$ in Prop.~\ref{def:PC}, $p_C[\bsig, \btau ; l]$ counts factorizations of permutations, that is constellations, and thereby certain branched coverings of genera $h_c=g(\sigma_c, \mu_1, \ldots, \mu_{l_c})$, with 
$\lvert\Pi(\sigma_c, \mu_1, \ldots, \mu_{l_c})\rvert$ connected components which satisfy the Riemann-Hurwitz formula for non-connected coverings $l_c= \#(\alpha_c) + \#(\beta_c) + 2 h_c - 2 \lvert\Pi(\sigma_c, \mu_1, \ldots, \mu_{l_c})\rvert $.
However, the transitivity condition in Prop.~\ref{def:PC}, which involves all the permutations, for all $c$, imposes a global constrained on the coverings. We show in Sec.~\ref{sub:Coverings} that 
$\frac 1 {n!}H^l(\alpha_1, \beta_1, \ldots, \alpha_D, \beta_D)$ counts certain weighted \emph{connected} branched coverings of $D$ distinguishable 2-spheres \emph{that ``touch'' at a single common point}. The covering spaces are nodal surfaces, for which a generalization of the genus - the arithmetic genus - is kept fixed.  A generalization of the Riemann-Hurwitz formula relates the arithmetic genus to the number of preimages of the branch points.

\newpage
\section{Interpretation of the combinatorial quantities in terms of nodal surfaces}
\label{sec:nodal}

In this section we give a geometrical picture for the different combinatorial quantities at play: 
\begin{itemize}
\item In Sec.~\ref{sub:Coverings}, we interpret transitive factorizations of multiplets of permutations with conditions on the length of the permutations - like $m_C$ (Thm.~\ref{thm:1Nexpansion-Weingarten-Cumulants}) or $p_C$ (Prop.~\ref{def:PC}) - as branched coverings of $D$ spheres touching at one common point.
\item 
In Sec.~\ref{sub:Geom-descr-pC} we develop step-by-step a geometric understanding of $m_C$, $p_C$, and of the combinatorial formula~\eqref{formula-gen-l} that gives an expression for $p_C$ in terms of single Hurwitz numbers. 
\end{itemize}
Both geometric descriptions involve nodal surfaces, which are collections of surfaces that ``touch'' in groups at certain points called nodes. Let us provide more formal definitions.

\subsection{Nodal surfaces and nodal topological constellations}
\label{sub:def-nodal}

\paragraph{Nodal surfaces.}
Given $p\ge 2$  topological spaces $X_i$, $1\le i\le p$, each with a distinguished point $x_i$, the \emph{wedge sum} of the spaces $X_i$ at the points $x_i$ is the quotient space of the disjoint union of the spaces $X_i$ by the identification $\forall i<j \in \{1,\ldots, p\}$,  $x_i\sim x_j$. A wedge sum of 2-spheres is often poetically called a \emph{bouquet} of 2-spheres. 

A surface is an orientable manifold of dimension two, together with an orientation. Given a surface $X=\sqcup_i X_i$ with $p$ connected components $X_i$, as well as $r$ sets of points $P_j$,  $1\le j \le r$, such that all the elements in all of the sets $P_j$ are distinct  points that may belong to any of the connected surfaces $X_i$ (a set $P_j$ may contain several different points from the same $X_i$), a \emph{nodal surface with $r\ge1$ nodes} is the quotient space of $X$ by the identifications $\forall j\in\{1,\ldots,r\}$, $\forall x, y\in P_j$, $x\sim y$. For each $j\in\{1,\ldots,r\}$, the identification of all the points in $P_j$ defines a \emph{node} or \emph{nodal point}. Such a nodal surface is then denoted by $X^\bullet=X / \{P_j\}_{1\le j \le r}$, and the $X_i$ are said to be its \emph{irreducible components}. 
A wedge sum of surfaces is a nodal surface with one nodal point but the converse is not always true: a surfaces may have two or more distinct points in a node. 

A nodal surface is said to be \emph{connected} if for any two points, there exists a path between them, the path being allowed to jump from one surface to another through a nodal point at which they touch.

A map $F:X^\bullet\rightarrow Y^\bullet$ between two nodal surfaces  is said to be a \emph{homeomorphism} if:
\begin{itemize}
 \item the restriction of $F$ to each irreducible component of $X^\bullet$ is well-defined and is a homeomorphism between surfaces
 \item $F$ preserves the identifications for each node (that is, the points in the irreducible components  of the codomain $Y^\bullet$ that are identified in a given node  are exactly the images of the points that belong to the irreducible components  of the domain $X^\bullet$ that are identified in a node of $X^\bullet$).

\end{itemize}

\paragraph{Arithmetic genus.}The \emph{arithmetic genus} of a connected nodal surface $X^\bullet=\sqcup_{i=1}^p X_i / \{P_j\}_{1\le j \le r}$ with $p$ irreducible components $X_i$ and $r$ nodes is: 
\be 
\label{eq:nodal-genus}
\mathcal{G}(X^\bullet) = \sum_{i=1}^p  g(X_i) + \mathcal L(X^\bullet), \qquad \mathcal L(X^\bullet)=\sum_{j=1}^r\left[\mathrm{Card}(P_j) - 1\right] - p  + 1,
\ee
where $g(X_i)$ is the genus of $X_i$ and $\mathcal L(X^\bullet)$ is the rank of the first homology group of $X^{\bullet}$. This is also the number of excess edges of the abstract graph that has a point vertex for each node $P_j$, a square vertex for each $X_i$, and an edge between a point vertex and a square vertex if the corresponding node belongs to the corresponding irreducible component $X_i$, $P_j\cap X_i \neq\emptyset$.
The arithmetic genus is the genus obtained by ``smoothing'' the nodes, whereas $\sum_{i=1}^p  g(X_i)$ is sometimes called the \emph{geometric genus} of the nodal surface. See the example in Fig.~\ref{fig:arithmetic genus}, which has geometric genus 2 but arithmetic genus 4 ($\mathcal L=2$).
\begin{figure}[!h]
\centering
\includegraphics[scale=0.55]{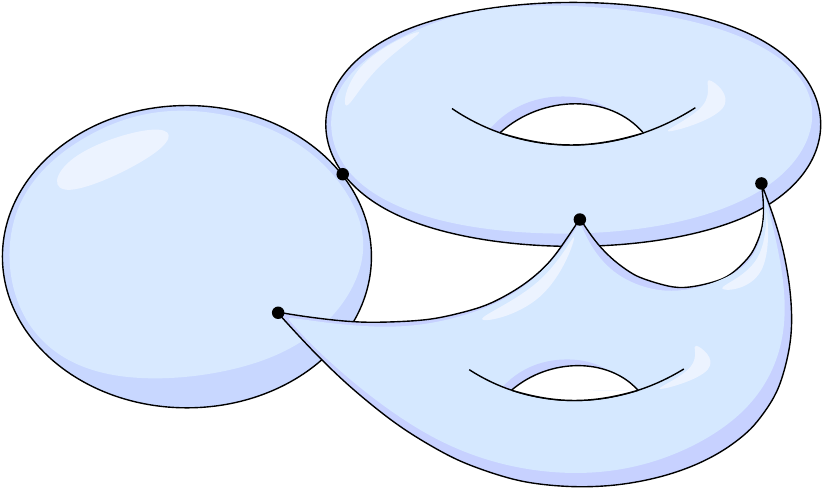} \hspace{2cm} \includegraphics[scale=0.55]{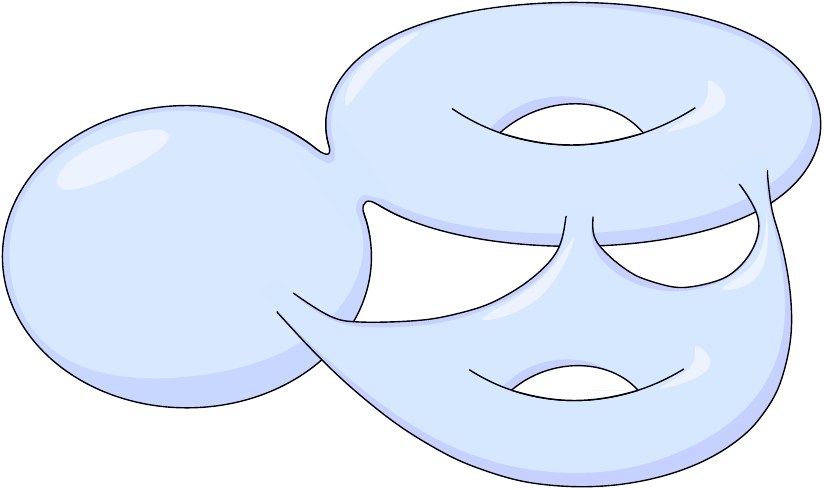} 
\caption{A nodal surface of arithmetic genus 4 (left), and a surface of genus 4 obtained by ``smoothing'' the nodes (right).}
\label{fig:arithmetic genus}
\end{figure}

\paragraph{Nodal topological constellations.} Consider a $D$-uplet $\bethat =(\hat \eta^1, \ldots, \hat \eta^D)$ of constellations defined on the same set of $n$ elements, where $\hat \eta^c = (\eta^c_1, \ldots, \eta^c_{k_c})$ is a $k_c$-constellation, $k_c\ge 1$. As detailed in Sec.~\ref{sec:const}, a graph embedded in a connected surface $X$ is the drawing of a connected graph on $X$ so that the vertices correspond to distinct points on the surface, the images of the edges are paths that may only intersect at the vertices and the complement of the graph in $X$ is homeomorphic to a disjoint union of discs. To simplify the discussion below, we say that a non-connected graph with $p$ components is embedded in a  surface with $p$ connected components if the connected components of the graph are embedded in the connected components of the surface. 

For $X_1, X_2$ two connected surfaces, two embedded graphs $\Gamma_1\subset X_1$ and $\Gamma_2\subset X_2$ are said to be isomorphic if there exists an homeomorphism of surfaces $\phi : X_1 \rightarrow X_2$ whose restriction to $\Gamma_1$ is a graph isomorphism  between $\Gamma_1$ and $\Gamma_2$.  

We consider each constellation $\hat \eta^c$ as an isomorphism class of (non-necessarily connected) embedded graphs (for more details, see \cite{LandoZvonkin}). For each $1\le c \le D$,  and for every choice of graph embeddings $\Gamma_c \subset X_c$ in the isomorphism class, the white vertices are points on the (non-necessarily connected) surface $X_c$, which we denote by $v^c_i$, $1\le i \le n$, and denoting by $P_j=\{v^1_j, \ldots, v^D_j\}$, we consider the nodal surface $\sqcup_{c=1}^D X_c / \{P_j\}_{1\le j \le n}$, together with the graph $\Gamma_c $ embedded in each surface $X_c$. Two such objects, called here \emph{nodal embedded graphs}, are said to be isomorphic if there exists an homeomorphism between the nodal surfaces as defined above, such that the restriction to each domain irreducible component is an isomorphism between embedded graphs. 

We call \emph{nodal topological constellation} the resulting isomorphism classes of nodal embedded graphs. It is uniquely encoded by an ordered multiplet of constellations on the same $n$ elements.

\paragraph{Example: the nodal topological constellation $\cS(\bsig, \btau)$.} We may for instance view $(\bsig, \btau^{-1})$ as a nodal topological constellation, which we denote by $\cS(\bsig, \btau)$, where the role of $\hat \eta^c$ is played by the 2-constellation $(\sigma_c, \tau_c^{-1})$. We represent for each $i\in\{1,\ldots, n\}$  the identification of the white vertices labeled $i$ by introducing a new \emph{triangular} vertex, linked by dotted edges to the white vertices labeled $i$ in every one of the $D$ bipartite maps  $(\sigma_c, \tau_c^{-1})$. 
This is illustrated in Fig.~\ref{fig:nodal-surface-1}.

\begin{figure}[!h]
\centering
\includegraphics[scale=0.7]{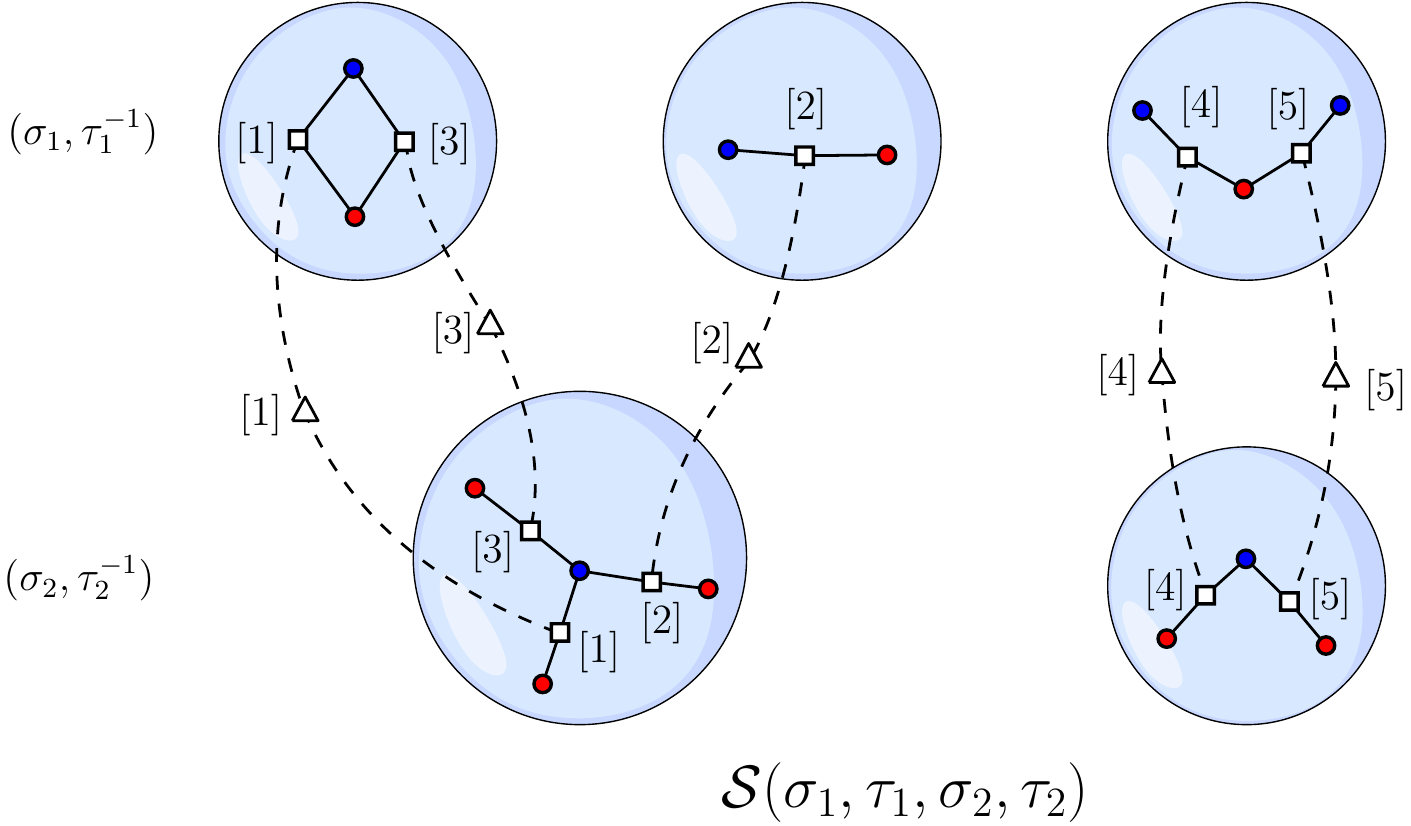} 
\caption{Graphical representation of $\cS(\bsig, \btau)$ for an example in $D=2$, $n=5$, where $\sigma_1= (13)(2)(4)(5)$, $\tau_1^{-1}=(13)(2)(45)$, $\sigma_1\tau_1^{-1}=(1)(2)(3)(45)$, $\sigma_2= (123)(45)$, $\tau_2^{-1}=\id_5$, $\sigma_2\tau_2^{-1}=(123)(45)$. The blue vertices (flavor 1) represent the $\sigma$s and the red vertices (flavor 2) represent the $\tau$s.
Here we have represented the bipartite maps as 2-constellations and added a $D$-valent triangular node for every $i\in\{1, \ldots, n\}$, between the corresponding white vertices (edges of the bipartite map). In this example, $\lvert\Pi(\bsig, \btau)\rvert=2$, and the arithmetic genus is 2.
}
\label{fig:nodal-surface-1}
\end{figure}

\paragraph{Isomorphisms and relabeling.}  Two (topological) $k$-constellations $(\eta_1, \ldots, \eta_{k})$ and $(\rho_1, \ldots, \rho_{k})$ are said to be isomorphic if thy differ by a relabeling of $1,\ldots, n$, that is if there exists $\nu\in S_n$ such that $\eta_i = \nu \rho_i \nu^{-1}$ for all $i$. Two nodal topological constellations encoded respectively by $\bethat$ and $\brho$  where $\hat \eta^c = (\eta^c_1, \ldots, \eta^c_{k_c})$ and $\hat \rho^c = (\rho^c_1, \ldots, \rho^c_{k_c})$  are $k_c$-constellations are said to be isomorphic if there exists $\nu\in S_n$ such that for all $1\le c \le D$ and all $1\le i\le k_c$, $\eta^c_i = \nu \rho^c_i \nu^{-1}$. Note that $\nu$ must be the same for all colors: an isomorphism between nodal constellations is a  \emph{simultaneous} relabeling of $1,\ldots, n$ for all colors.

\paragraph{Transitivity and connectivity.} The introduction of nodal surfaces and nodal topological constellations is motivated by the following lemma: 
\begin{Lem}
\label{Lem:transitivity-nodal}
For $c\in\{1,\ldots,D\}$, let $\hat \eta^c = (\eta^c_1, \ldots, \eta^c_{k_c})$ be a $k_c$-constellation, $k_c\ge 1$. The number of transitivity classes $\lvert \Pi(\bethat)\rvert$ of the group generated by all $\hat \eta_i^c$ on $\{1,\ldots, n\}$  is the number of connected components of the corresponding nodal topological constellation.
\end{Lem}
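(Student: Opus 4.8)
The plan is to reduce the statement to an equality of two partitions of $\{1,\ldots,n\}$. On the one hand we have $\Pi(\bethat) = \bigvee_{c=1}^D \Pi(\hat\eta^c)$, the transitivity partition of the group generated by all the $\eta^c_i$. On the other hand, the white vertices $v^c_i$ are the constituents of the nodes $P_i = \{v^1_i,\ldots,v^D_i\}$, so the nodes are naturally indexed by $\{1,\ldots,n\}$; I would introduce the partition $\Pi^\bullet$ whose blocks group together those labels $i$ whose nodes $P_i$ lie in the same connected component of the nodal surface. Since connectivity is a combinatorial invariant of the nodal topological constellation, $\Pi^\bullet$ depends only on the $\bethat$ and not on the chosen embeddings. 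First I would check that every connected component of the nodal surface contains at least one node, so that $\Pi^\bullet$ accounts for all components; then I would prove $\Pi^\bullet = \Pi(\bethat)$, which yields a bijection between components and blocks and hence the desired equality of cardinalities.

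For the first point, recall from Sec.~\ref{sec:const} that the connected components of the embedded graph $\Gamma_c$ --- equivalently, the connected components of $X_c$ --- are in bijection with the blocks of $\Pi(\hat\eta^c) = \bigvee_{i=1}^{k_c} \Pi(\eta^c_i)$, and that each such block is a nonempty subset of $\{1,\ldots,n\}$. Thus every connected component of $X_c$ contains at least one white vertex $v^c_i$, hence meets at least one node $P_i$. Since the irreducible components of $X^\bullet = \sqcup_c X_c/\{P_i\}$ are exactly the connected components of the $X_c$, every irreducible component, and a fortiori every connected component of the nodal surface, contains a node. This is what makes $\Pi^\bullet$ defined on all of $\{1,\ldots,n\}$ and guarantees that no component of $X^\bullet$ is overlooked when we index components by their nodes.

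The heart of the argument is the identity $\Pi^\bullet = \Pi(\bethat)$, which I would establish by a double inclusion of the generated equivalence relations. For $\Pi(\bethat) \le \Pi^\bullet$: if $i$ and $i'$ lie in the same block of some $\Pi(\hat\eta^c)$, then $v^c_i$ and $v^c_{i'}$ lie in the same connected component of $X_c$, and since $v^c_i \sim P_i$ and $v^c_{i'} \sim P_{i'}$ under the nodal identifications, $P_i$ and $P_{i'}$ lie in the same component of $X^\bullet$; as $\Pi(\bethat)$ is the join of the $\Pi(\hat\eta^c)$, chaining such elementary steps shows that any two $\Pi(\bethat)$-equivalent labels are $\Pi^\bullet$-equivalent. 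For the reverse inclusion $\Pi^\bullet \le \Pi(\bethat)$: a path in $X^\bullet$ joining $P_i$ to $P_{i'}$ decomposes into arcs, each contained in a single irreducible component of some $X_c$ and joining two nodes, interspersed with passages through nodes. Each arc keeps its two node-endpoints in the same connected component of $X_c$, hence in the same block of $\Pi(\hat\eta^c) \le \Pi(\bethat)$, while a passage through a node leaves the label unchanged; concatenating along the path yields $i \sim_{\Pi(\bethat)} i'$.

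The main delicacy is precisely this reverse inclusion, which amounts to the claim that connectivity through the nodes of $X^\bullet$ introduces no identifications beyond those already recorded in $\bigvee_c \Pi(\hat\eta^c)$. The hard part is therefore the topological bookkeeping: one must argue that a path in the nodal surface can only switch between irreducible components at a node, and that within a single irreducible component connectivity is exactly the per-color transitivity recalled from Sec.~\ref{sec:const}. Once $\Pi^\bullet = \Pi(\bethat)$ is in hand, combining it with the first point gives that the connected components of the nodal topological constellation are in bijection with the blocks of $\Pi(\bethat)$, so their number is $\lvert \Pi(\bethat)\rvert$, completing the proof.
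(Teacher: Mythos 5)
Your proof is correct, and it reaches the same conclusion by the same underlying principle as the paper (connectivity of the glued object equals transitivity of the generated group), but it is organized differently. The paper's proof works directly on the glued graph $\Gamma^\bullet$ and re-derives the full path--word dictionary there: it builds an explicit path from a word $w$ with $w(a)=b$ by reading the word right to left and following flavored edges, and conversely reads off a word from a path by assigning a power of $\eta^c_i$ to each step around a flavored vertex. You instead take the single-color statement (components of $\Gamma_c\subset X_c$ correspond to blocks of $\Pi(\hat\eta^c)$, already established in Sec.~\ref{sec:const}) as a black box, and reduce the new content of the lemma to the purely set-theoretic fact that gluing spaces at the nodes makes the component partition of the labels equal to the join $\bigvee_c \Pi(\hat\eta^c) = \Pi(\bethat)$, proved by double inclusion. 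This buys a cleaner factorization and avoids repeating the combinatorial walk construction; what it requires in exchange is the topological bookkeeping you correctly flag, namely that a path in $X^\bullet$ can only change irreducible component at a node --- but this is essentially built into the paper's definition of connectedness for nodal surfaces (paths are allowed to jump between surfaces only through nodes), so the decomposition of a path into per-component arcs joined at nodes is immediate. Your preliminary check that every connected component of $X^\bullet$ contains a node (so that indexing components by nodes misses nothing) is a point the paper glosses over and is worth making explicit. I see no gap.
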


For instance, $\cS(\bsig, \btau)$ is connected if and only if the group generated by all $\sigma_c, \tau_c$ acts transitively on $\{1,\ldots, n\}$, and more generally, the number of connected components of this nodal topological constellation is $\lvert\Pi(\bsig, \btau)\rvert$.

\proof  Consider a representative in the isomorphism class of nodal embedded graphs, that is, a nodal surface $X^\bullet=\sqcup_{c}  X_c / \{P_j\}_{1\le j \le n}$  together with the graph $\Gamma_c$ corresponding to $\hat \eta^c$ embedded in the surface $X_c$ for every $c$. From the definition of an embedded graph, we know that $X_c$ has $\lvert \Pi(\hat \eta^c)\rvert$ connected components. 
It is therefore enough to show that two elements $a,b\in\{1,\ldots,n\}$ are in the  same transitivity class of the group generated by $\{\eta^c_1, \ldots, \eta^c_{k_c}\}_{1\le c\le D}$ if and only if there exists a path between the corresponding nodes on the graph $\Gamma^\bullet$ obtained from $\Gamma_1, \ldots, \Gamma_D$ by identifying the $D$ white vertices of flavor $k$ for each $k\in\{1,\ldots,n\}$. 

Two elements $a,b\in\{1,\ldots,n\}$ are in the same transitivity class of the group generated by $\{\eta^c_1, \ldots, \eta^c_{k_c}\}_{1\le c\le D}$ if and only if there exists a word  $w$ in these permutations and their inverses so that $w(a)=b$. Assuming that this is the case, we may build a path between the points corresponding to the two nodes labeled $a$ and $b$ in $\{1,\ldots, n\}$ in $\Gamma^\bullet$  as follows: we read the word $w$ from right to left,  when encountering a permutation $\eta^c_{i}(d)$, $1\le d \le n$ the path follows the two edges of flavor $i$ from the node labeled $d$ to the node labeled $\eta^c_{i}(d)$ on the embedded graph $\Gamma_c \subset X_c$, and similarly for $\eta^c_{i}(d)^{-1}$. \emph{The important point is that there is no problem  in successively applying permutations of different colors, since the path may go between any two $\Gamma_c\subset X_c$ and $\Gamma_{c'}\subset X_{c'}$ at any node.} Conversely, a path in $\Gamma^\bullet$ from a node $a$ to a node $b$ is composed of successive steps from a node to a vertex of flavor $i$ via an edge $e$ and on to another node $j$ via an edge $e'$ is some $\Gamma_c$ for some $c$. 
To each such step we associate the permutation $(\eta_i^c)^{(d+1)}$, $d$ being the number of edges encountered when turning from $e$ to $e'$ around the flavored vertex clockwise.  A word $w$ such that $w(a)=b$ is then obtained by composing these permutations from right to left.
\qed



\

Now, as the number of connected components of the nodal topological constellation is $\lvert \Pi (\bethat)\rvert$, the number of its irreducible components is $\sum_c \lvert \Pi(\hat \eta^c)\rvert$, its geometric genus is $\sum_c g(\hat \eta^c)$, and the surface has $n$ nodal points each with cardinal $D$, we obtain the arithmetic genus
\eqref{eq:nodal-genus} of the nodal topological constellation:
\be
\label{eq:nodal-genus2}
\mathcal{G} (\bethat) =
 \sum_{c=1}^D g(\hat \eta^c)  +
 D n - ( \sum_{c=1}^D \lvert \Pi(\hat \eta^c)\rvert + n     -  \lvert \Pi (\bethat)\rvert) = 
\sum_{c=1}^D \left( g(\hat \eta^c) - \lvert \Pi(\hat \eta^c)\rvert \right)+ n(D-1)+ \lvert \Pi (\bethat)\rvert.
\ee

For instance, combining this formula for $\cS(\bsig, \btau)$, with Eq.~\eqref{eq:Euler-constellations-lengths} for 
the Euler characteristics of $(\sigma_c, \tau_c^{-1})$ we get:
\be 
\label{eq:Eulersigmatau}
2  g(\sigma_c, \tau_c^{-1}) - 2 \lvert \Pi(\sigma_c, \tau_c^{-1})\rvert  = \lVert \sigma_c \tau_c^{-1}\rVert - \#(\sigma_c) - \#(\tau_c) \;.
\ee
Supplementing this by the definition \eqref{eq:l-minimal} $\ell(\bsig, \btau)=\sum_{c=1}^D \lVert\sigma_c\tau_c^{-1}\rVert  + 2  \big(\lvert \Pi(\bsig,\btau)\rvert-1 \big)$ leads to the following lemma.
\begin{Lem} 
The arithmetic genus $\mathcal G(\bsig, \btau)$ of $\cS(\bsig, \btau)$ is related to $\ell(\bsig, \btau)$  by:
\be
\label{eq:relation-genus-ell-1}
\ell(\bsig, \btau) = \sum_{c=1}^D\left[ \#(\sigma_c) + \#(\tau_c)\right] + 2 \mathcal G(\bsig, \btau) - 2 - 2n(D-1).
\ee
\end{Lem}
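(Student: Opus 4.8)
The plan is purely an algebraic recombination of three relations already in hand: the general arithmetic-genus formula \eqref{eq:nodal-genus2}, the per-color Euler relation \eqref{eq:Eulersigmatau}, and the definition \eqref{eq:l-minimal} of $\ell(\bsig,\btau)$. First I would specialize \eqref{eq:nodal-genus2} to the nodal topological constellation $\cS(\bsig,\btau)$, for which the role of $\hat\eta^c$ is played by the $2$-constellation $(\sigma_c,\tau_c^{-1})$, so that $g(\hat\eta^c)=g(\sigma_c,\tau_c^{-1})$ and $\lvert\Pi(\hat\eta^c)\rvert=\lvert\Pi(\sigma_c,\tau_c^{-1})\rvert$. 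This gives
\[
\mathcal{G}(\bsig,\btau)=\sum_{c=1}^D\bigl(g(\sigma_c,\tau_c^{-1})-\lvert\Pi(\sigma_c,\tau_c^{-1})\rvert\bigr)+n(D-1)+\lvert\Pi(\bsig,\btau)\rvert.
\]

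Next I would eliminate the genera and the per-color connectivity counts using \eqref{eq:Eulersigmatau}, which yields $g(\sigma_c,\tau_c^{-1})-\lvert\Pi(\sigma_c,\tau_c^{-1})\rvert=\tfrac12\bigl(\lVert\sigma_c\tau_c^{-1}\rVert-\#(\sigma_c)-\#(\tau_c)\bigr)$. Substituting and clearing the factor of two, the arithmetic genus is expressed entirely in terms of lengths, cycle counts, and $\lvert\Pi(\bsig,\btau)\rvert$:
\[
2\mathcal{G}(\bsig,\btau)=\sum_{c=1}^D\bigl(\lVert\sigma_c\tau_c^{-1}\rVert-\#(\sigma_c)-\#(\tau_c)\bigr)+2n(D-1)+2\lvert\Pi(\bsig,\btau)\rvert.
\]

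I would then solve this for $\sum_{c}\lVert\sigma_c\tau_c^{-1}\rVert$ and insert the result into \eqref{eq:l-minimal}, namely $\ell(\bsig,\btau)=\sum_c\lVert\sigma_c\tau_c^{-1}\rVert+2(\lvert\Pi(\bsig,\btau)\rvert-1)$. The key observation is that the two occurrences of $\lvert\Pi(\bsig,\btau)\rvert$ cancel exactly: the $-2\lvert\Pi(\bsig,\btau)\rvert$ coming from solving the genus relation is matched by the $+2\lvert\Pi(\bsig,\btau)\rvert$ in the definition of $\ell$. What survives is precisely
\[
\ell(\bsig,\btau)=\sum_{c=1}^D\bigl[\#(\sigma_c)+\#(\tau_c)\bigr]+2\mathcal{G}(\bsig,\btau)-2-2n(D-1),
\]
which is the claimed identity \eqref{eq:relation-genus-ell-1}.

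There is no genuine obstacle at this stage: the lemma is bookkeeping that assembles pieces whose content lies elsewhere. The two nontrivial inputs are the arithmetic-genus formula \eqref{eq:nodal-genus2}, which rests on the count $\lvert\Pi(\bethat)\rvert$ of connected components established in Lemma~\ref{Lem:transitivity-nodal}, and the Euler characteristic relation \eqref{eq:Eulersigmatau}, itself a specialization of \eqref{eq:Euler-constellations-lengths}; both are already proved. The only point requiring a moment of care is confirming that the $\lvert\Pi(\bsig,\btau)\rvert$ terms cancel rather than combine, which I would verify explicitly when writing out the substitution.
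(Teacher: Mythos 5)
Your proof is correct and is exactly the paper's argument: the paper derives the lemma by specializing \eqref{eq:nodal-genus2} to $\cS(\bsig,\btau)$, inserting the Euler relation \eqref{eq:Eulersigmatau}, and substituting into the definition \eqref{eq:l-minimal} of $\ell(\bsig,\btau)$, with the $\lvert\Pi(\bsig,\btau)\rvert$ terms cancelling just as you note. Nothing further is needed.
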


\subsection{Transitive factorizations of multiplets of permutations and branched coverings of a bouquet of 2-spheres}
\label{sub:Coverings}

Given two topological spaces $X$ and $Y$, and a subset $L$ of $Y$, a map $f:X\rightarrow Y$ is said to be an \emph{$n$-sheeted branched covering of $Y$ branched over $L$}, if $f$ restricted to the complement of the preimage of $L$ in $X$ is continuous, and such that for every $y\in Y\setminus L$, there exists an open neighborhood $U$ such that $f^{-1}(U)$ is homeomorphic to $U\times \{1, \ldots, n\}$.  Two branched coverings $f_1:X_1 \rightarrow Y$ and $f_2:X_2 \rightarrow Y$ are said to be \emph{isomorphic} if there exists an orientation preserving homeomorphism $u:X_1 \rightarrow X_2$ such that $f_1=f_2 \circ u$.  The set $L$ is called the \emph{branch locus}, $Y$ the \emph{target space}, and $X$ the \emph{covering space}. The number of connected components of a covering is that of the covering space. For $X,Y$ two nodal surfaces, $L$ consists of points called \emph{branch points}, and their preimages are called \emph{singular points}. 

\

It is well known (see \cite{LandoZvonkin}) that $n$-sheeted coverings of the oriented 2-sphere branched over $k$ ordered points up to isomorphisms are in one-to-one correspondence with $k$-constellations $\hat \eta = (\eta_1, \ldots, \eta_{k})$, that is $k$-uplets of permutations of $n$ elements, such that $\eta_1\cdots\eta_k = \id$, up to isomorphisms. Given such an unlabelled constellation, an isomorphism class of branched coverings is obtained by sending each face of the corresponding topological constellation (for every surface in the isomorphism class) to the face of the unique constellation with one white vertex. Each ``star'' in the constellation formed by a white vertex and its incident edges thus corresponds to the preimage of the only ``star'' in the target space. The  vertices with flavors of the constellation correspond to the  singular points, and the partitions of $n$ that label the conjugacy classes of the permutations $\eta_1, \ldots, \eta_{k}$, called \emph{ramification profiles}, describe the way in which the $n$ sheets meet in groups at the singular points.  The covering space is a collection of $K=\lvert \Pi(\hat \eta)\rvert$ connected surfaces seen up to isomorphisms, whose genera sum up to $h=g(\hat \eta)$. The \emph{Riemann-Hurwitz formula} relates these two numbers: 
\be
\label{eq:RiemHurw}
\sum_{i=1}^k \lVert \eta_i\rVert= 2(n + h - K)\;,
\ee
where for the branch point labeled $i$, $ \lVert \eta_i\rVert =  n-\#( \eta_i)$ is the  difference between the number $n$ of preimages that the point would have if it was not in the branch locus, and the number of preimages it actually has.

A $D$-uplet of constellations $\bethat=(\hat \eta^1, \ldots, \hat \eta^D)$, where $\hat \eta^c = (\eta^c_1, \ldots, \eta^c_{k_c})$ is a $k_c$-constellation on $n$ elements, $k_c\ge 1$, up to isomorphisms is therefore in bijection with $D$ branched coverings $f_1, \ldots f_D$ of the 2-sphere $S$, up to isomorphisms, $f_c : X_c \rightarrow S$ being branched  over $k_c$ points. Unlike for nodal constellations, here the isomorphisms are for each color independently, that is, independent relabelings of $1,\ldots,n$ for different colors are allowed.
There is no direct interpretation in this context for the quantity $\lvert \Pi(\bethat)\rvert$, which moreover is not invariant under relabelings of $\{1,\ldots,n\}$ for each color independently: it is only invariant under simultaneous relabelings for all colors. 
On the other hand, $\lvert \Pi(\bethat)\rvert$ has a natural interpretation in the context of nodal constellations, as stated in the following theorem.

\begin{Th}
\label{th:bij-coverings-topoCons}
Isomorphism classes of connected branched coverings of a bouquet of $D$ distinguishable 2-spheres $S_c$, $c\in \{1,\ldots,D\}$ branched over a set of $k+2D$ ordered points that do not belong to the nodal point, $k_c+2$ of which belong to $S_c$ for each $c$ ($\sum_{c=1}^D k_c =k$) are in one-to-one correspondence with  systems of permutations of the type: 
\begin{itemize}
   \item For $c\in\{1, \ldots,D\}$, $\eta^c_0, \ldots, \eta^c_{k_c+1} \in S_n$ such that  we have  $\mathrm{id}=\eta^c_0\cdots \eta^c_{k_c+1}$,
    \item $\lvert\Pi(\bethat)\rvert=1$, that is, the group generated by all the permutations is transitive on
    \{1,\ldots, n\},
\end{itemize}
up to isomorphisms of nodal constellations (up to simultaneous relabeling of $1,\ldots,n$ for all $1\le c \le D$).
\end{Th}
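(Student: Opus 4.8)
The plan is to reduce to the classical correspondence between branched coverings of a single oriented 2-sphere and constellations (recalled just above the statement, following \cite{LandoZvonkin}), using the node as base point. Fix the nodal point $x_0$ of the target bouquet $S_1\vee\cdots\vee S_D$; by hypothesis $x_0$ does not lie in the branch locus, so any $n$-sheeted branched covering $f:X^\bullet\to S_1\vee\cdots\vee S_D$ restricts to an ordinary covering over a neighborhood of $x_0$, and the fiber $f^{-1}(x_0)$ consists of exactly $n$ points. These $n$ points are precisely the nodes of the nodal covering surface $X^\bullet$: near each of them the bouquet is covered by a wedge of $D$ disks, one per sphere, which is the local model of a cardinal-$D$ node. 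I would label the fiber $f^{-1}(x_0)$ by $\{1,\ldots,n\}$ once and for all; this single labeling is shared by all colors, which is the crucial structural feature that makes the data a \emph{nodal} constellation rather than $D$ unrelated ones.

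First I would construct the forward map. For each $c$, the restriction $f_c=f|_{X_c}:X_c\to S_c$ is an $n$-sheeted branched covering of $S_c$ over its $k_c+2$ ordered branch points, with $f_c^{-1}(x_0)$ equal to the chosen labeled fiber. Taking $x_0$ as base point and the standard loops around the ordered branch points (whose product is contractible on $S_c$) as generators of $\pi_1(S_c\setminus L_c,x_0)$, the monodromy of $f_c$ yields permutations $\eta^c_0,\ldots,\eta^c_{k_c+1}\in S_n$ with $\eta^c_0\cdots\eta^c_{k_c+1}=\mathrm{id}$, i.e. a constellation $\hat\eta^c$. Because all the $f_c$ share the same labeled fiber over $x_0$, the resulting $\bethat=(\hat\eta^1,\ldots,\hat\eta^D)$ is canonically a nodal topological constellation in the sense of Sec.~\ref{sub:def-nodal}. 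Conversely, given the permutation data I would reconstruct each $f_c$ from $\hat\eta^c$ by the classical bijection and then glue the $D$ covering surfaces along their fibers over $x_0$, identifying the points carrying the same label; this produces a nodal surface $X^\bullet$ with $n$ cardinal-$D$ nodes and a branched covering $f$ of the bouquet. The two constructions are mutually inverse by the single-sphere statement applied color by color.

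It then remains to match the two decorations. For connectivity, I would invoke Lemma~\ref{Lem:transitivity-nodal}: the number of connected components of $X^\bullet$ equals $\lvert\Pi(\bethat)\rvert$, so $X^\bullet$ is connected if and only if the group generated by all the $\eta^c_j$ acts transitively, i.e. $\lvert\Pi(\bethat)\rvert=1$. For the equivalence relations, an isomorphism of coverings is an orientation-preserving homeomorphism $u:X_1^\bullet\to X_2^\bullet$ with $f_1=f_2\circ u$; such a $u$ permutes the common fiber over $x_0$, i.e. acts by a \emph{single} $\nu\in S_n$, and its restriction to each irreducible component relabels every $\hat\eta^c$ by this same $\nu$. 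This is exactly an isomorphism of nodal constellations (simultaneous relabeling of $\{1,\ldots,n\}$ for all colors).

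The main obstacle — and the whole point of the statement — is precisely this last identification: one must show that the relabeling forced on the fibers is \emph{simultaneous} across colors, not independent for each color. This is what separates the theorem from the trivial product of $D$ single-sphere bijections (under which each color could be relabeled on its own and $\lvert\Pi(\bethat)\rvert$ would carry no meaning). The node is what rigidifies the situation: since the $D$ fibers over $x_0$ are glued into $n$ common nodal points, the second clause in the definition of a homeomorphism of nodal surfaces (Sec.~\ref{sub:def-nodal}) forces $u$ to map nodes to nodes while preserving the identifications, so the induced fiber permutation is independent of $c$ and acts on all colors at once. I would make this precise simply by reading off that constraint from the definition, which yields the required $\nu\in S_n$.
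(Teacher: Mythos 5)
Your proposal is correct, and its skeleton matches the paper's: a forward map, an inverse by gluing, connectivity handled by Lemma~\ref{Lem:transitivity-nodal}, and the crucial observation that the node forces the relabeling of $\{1,\ldots,n\}$ to be \emph{simultaneous} across colors. Where you differ is in how you realize the classical single-sphere dictionary. You take the nodal point $x_0$ as a base point and read off the permutations as the monodromy of $f_c$ along standard loops in $\pi_1(S_c\setminus L_c,x_0)$, so the shared labeled fiber $f^{-1}(x_0)$ is the object that ties the $D$ colors together. The paper instead draws a star graph $\gamma_c$ on each target sphere joining the branch points to the node, pulls the resulting nodal graph $\gamma^\bullet$ back to $X^\bullet$, and identifies the preimage $\Gamma^\bullet\subset X^\bullet$ directly as a representative of the nodal embedded graph; it then checks on both sides that isomorphisms of coverings correspond to isomorphisms of nodal embedded graphs. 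The paper's route meshes more immediately with its definition of a nodal topological constellation as an isomorphism class of nodal \emph{embedded graphs} (Sec.~\ref{sub:def-nodal}), whereas your monodromy route lands directly on the permutation tuples and implicitly relies on the equivalence between the two encodings (which the paper has set up, and which is standard from \cite{LandoZvonkin}). Your treatment of the isomorphism matching — that a homeomorphism of nodal surfaces must carry nodes to nodes preserving the identifications, hence induces a single $\nu\in S_n$ acting on all colors at once — is exactly the paper's argument in different clothing. Both proofs are sound; yours is slightly more algebraic, the paper's slightly more explicitly topological.
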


\proof We prove the correspondence between topological objects, knowing the correspondence between nodal topological constellations and systems of permutations. Consider a branched covering $f:X^\bullet\rightarrow Y^\bullet$ where $Y^\bullet$ is a bouquet of $D$ distinguishable 2-spheres $S_c$, $c\in \{1,\ldots,D\}$. On each 2-sphere $S_c$ of the target space $Y^\bullet$, one can draw a star-graph $\gamma_c$ by adding non-crossing arcs between the $k_c$ branch points and the nodal point so that the order of the arcs around the nodal point grows from 1 to $k_c$ clockwise (see the right of Fig.~\ref{fig:covering-nodal-surface-2}). Doing this for all $c$, we get a nodal embedded graph $\gamma^\bullet\subset Y^\bullet$, whose preimage $\Gamma^\bullet\subset X^\bullet$  is a representative of a nodal topological constellation in the sense that it is a representative in the corresponding isomorphism class of nodal embedded graphs. There is no labeling of the $n$ preimages of the nodal point, so that the nodal constellation can be seen up to isomorphisms (up to simultaneous relabelings of $1,\ldots,n$ for all $1\le c \le D$). 


For two isomorphic branched coverings $f_1:X_1^\bullet \rightarrow Y^\bullet$ and $f_2:X_2^\bullet \rightarrow Y^\bullet$, there exists by definition an orientation preserving homeomorphism of nodal surfaces  $u:X_1^\bullet \rightarrow X_2^\bullet$ such that $f_1=f_2 \circ u$. Denoting by $\Gamma_1^\bullet\subset X_1^\bullet$ and $\Gamma_2^\bullet\subset X_2^\bullet$ the preimages of  $\gamma^\bullet\subset Y^\bullet$  by $f_1$ and $f_2$ respectively, it is clear that $u$ restricted to each irreducible component of $X_1^\bullet$ is an isomorphism between embedded graphs: it is an homeomorphism of surfaces by definition, and it is clear that the restriction of $u$ to $\Gamma_1^\bullet$ on each irreducible component is a graph isomorphism.  Therefore, the nodal embedded graphs $\Gamma_1^\bullet\subset X_1^\bullet$ and $\Gamma_2^\bullet\subset X_2^\bullet$ are isomorphic, and are two representatives of the unlabelled nodal topological constellation.

This defines a map from isomorphisms classes of branched coverings of $Y^\bullet$ to isomorphism classes of nodal topological constellations, and we now verify that this map is invertible. Indeed, consider a representative $\Gamma^\bullet\subset X^\bullet$ of a nodal topological constellation $\bethat=(\hat \eta^1, \ldots, \hat \eta^D)$, where $\hat \eta^c = (\eta^c_1, \ldots, \eta^c_{k_c})$ is a $k_c$-constellation. $\Gamma^\bullet\subset X^\bullet$  is a nodal embedded graph, and for every $c$, we denote by $X_c$ the disjoint union of the irreducible components of $X^\bullet$ that contain vertices associated with $\eta^c_0$. A branched covering $f:X^\bullet\rightarrow Y^\bullet$ is then obtained by choosing homeomorphisms sending each connected component of the complement of the graph $\Gamma^\bullet$ in $X_c$ to the complement of the star-graph $\gamma_c$ in the irreducible component $S_c$ of $Y^\bullet$. 

Given two representatives $\Gamma_1^\bullet\subset X_1^\bullet$ and $\Gamma_2^\bullet\subset X_2^\bullet$  of a nodal topological constellation, there exists an homeomorphism of nodal surfaces $u:X_1^\bullet \rightarrow X_2^\bullet$ that induces an isomorphism of embedded graphs on every irreducible component of $X_1^\bullet$. Considering the branched coverings $f_1:X_1^\bullet\rightarrow Y^\bullet$ and $f_2:X_2^\bullet\rightarrow Y^\bullet$ constructed as in the previous paragraph, we see that $f_1=f_2 \circ u$  so that $f_1$ and $f_2$ are isomorphic.

The construction described above that associates a covering $f$ to a representative $\Gamma^\bullet\subset X^\bullet$ is independent of the labeling of the nodal points of $\Gamma^\bullet$, so that we have defined the converse map from isomorphism classes of nodal topological constellations to isomorphisms classes of branched coverings of $Y^\bullet$.

The statement regarding the number of connected components is a direct consequence of Lemma~\ref{Lem:transitivity-nodal}.

\qed 

\

We illustrate this for the following example with $n=5$, $D=2$, $k_1=2$, $k_2=3$, for which the nodal topological constellation is represented graphically on the left of Fig.~\ref{fig:covering-nodal-surface-2}: 
For the permutations of $\hat \eta^1$: $\eta_0^1= (12)(3)(4)(5)$, $\eta_1^1=(12)(34)(5)$, $\eta^1_{2}=(12)(345)$, $\eta_1^{3}=(12)(3)(45)$;  For the permutations of $\hat \eta^2$:
$\eta_0^2= (132)(45)$, $\eta^2_{1}=(15)(24)(3)$, $\eta^2_{2}=(1)(23)(4)(5)$, $\eta^2_{3}=(14)(2)(35)$, $\eta_2^{4}=\id_5$. 
In the figure, the color representing the flavors 0,1,2,3,4 are in that order pink, blue, red, orange, green. 
The nodal topological constellation is connected and has arithmetic genus 4. 
The fact that $\eta_2^{4}=\id_5$ (all green vertices are leaves in the nodal constellation) means that when interpreted as a branched covering of a bouquet of two 2-spheres, the green vertex in the target space is actually not a branch point since it has 5 preimages.

\begin{figure}[!h]
\centering
\includegraphics[scale=0.7]{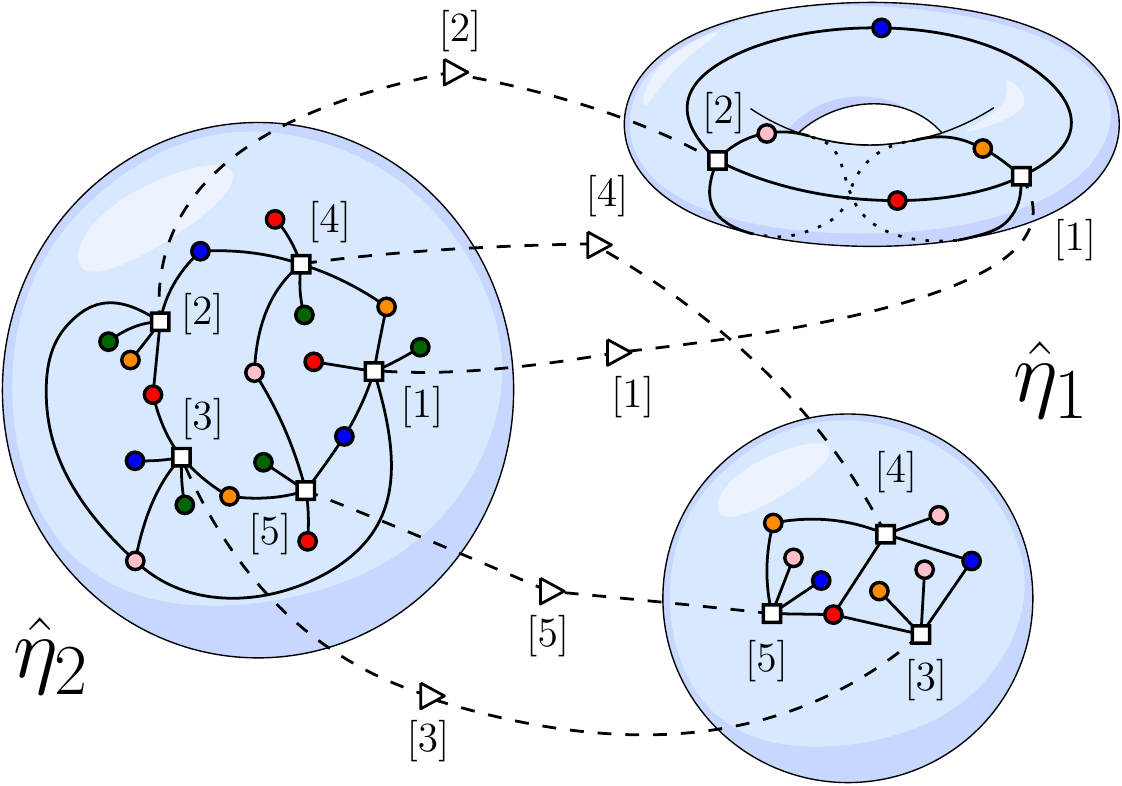} \qquad\raisebox{+15ex}{ {\Huge $\substack{{\rightarrow}\\{\scriptscriptstyle f}}$}}\qquad \raisebox{+5ex}{\includegraphics[scale=0.6]{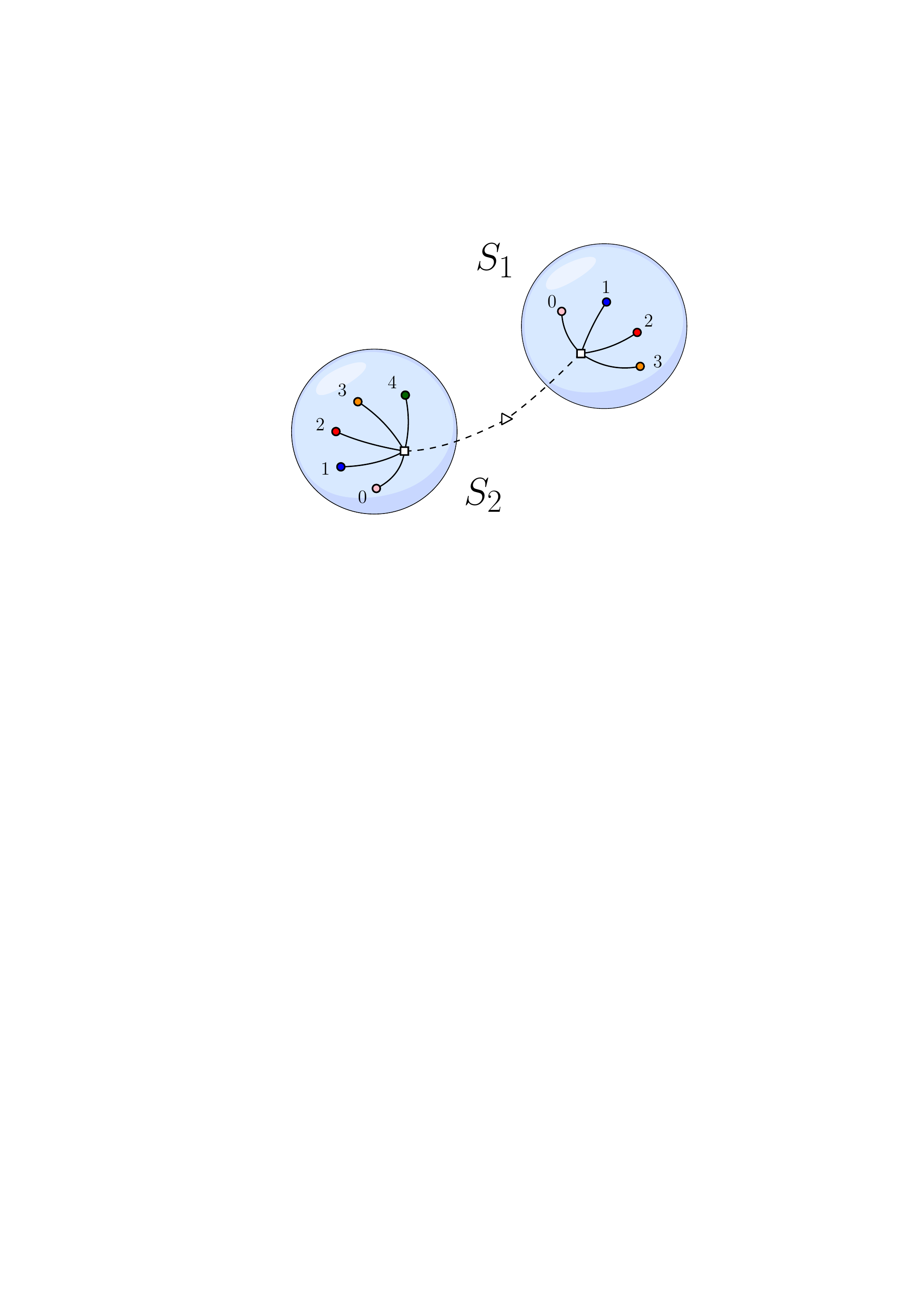}}
\caption{A nodal topological constellation (left) can be interpreted, up to relabeling of the nodes, as as an isomorphism class of branched coverings $f$ of a bouquet of $D$ distinguishable 2-spheres (right). In this example, $D=2$, $n=5$, $k_1=2$, and $k_2=3$.}
\label{fig:covering-nodal-surface-2}
\end{figure}

Fixing $D,k\ge 1$, $\mathcal H \ge 0$ and for all $c\in\{1,\ldots,D\}$, $\alpha_c, \beta_c \vdash n$ non-trivial, we let $\mathfrak{C}_{\mathcal H}[\{\alpha_c, \beta_c\}_c, k]$ be the set of isomorphism classes of connected $n$-sheeted branched coverings of a bouquet of $D$ distinguishable 2-spheres $S_c$, $c\in \{1,\ldots,D\}$ branched over a set of precisely\footnote{That is, every one of these points has less than $n$ preimages. Since $\alpha_c, \beta_c$ are non-trivial and since the other permutation involved in the definitions of $m_C$ and $p_C$ are different from the identity permutation (proper), all the $k+2D$ points have non-trivial ramifications.} $k+2D$ ordered points that do not belong to the nodal point, at least two of which belong to $S_c$ for each $c$, so that the first and last points for each $c$ respectively have ramification profiles $\alpha_c$ and $\beta_c$\footnote{For the Riemann sphere, these points are usually taken to be at zero and infinity.}, and so that the 
arithmetic genus of the covering space as defined in \eqref{eq:nodal-genus}  is $\mathcal H$. 

Note that for an element $X$ of $\mathfrak{C}_{\mathcal H}[\{\alpha_c, \beta_c\}_c, k]$,  the nodal surfaces in the isomorphism class have $n$ nodal points (the node of the bouquet of spheres does not belong to the branch locus and its $n$ preimages are the only nodes of $X$), so that $\mathcal L(X) =n(D-1) - p  + 1$ in \eqref{eq:nodal-genus}.

  We let  $\mathfrak{B}_{\mathcal H}[\{\alpha_c, \beta_c\}_c, k]$ be the subset of $\mathfrak{C}_{\mathcal H}[\{\alpha_c, \beta_c\}_c, k]$ of the elements $X$ for which the branch points whose ramification profiles are not fixed to one of the $\alpha_c$ or $\beta_c$ have simple ramification (they have $n-1$ preimages) and satisfy an additional monotonicity condition: Consider any set of permutations encoding $X$ (a choice of labeling of $1,...,n$ in Th.~\ref{th:bij-coverings-topoCons}). For $c\in\{1,\ldots,D\}$, the transpositions encoding the ramification profiles of the branch points in $S_c$ whose ramification profiles are not fixed to $\alpha_c$ or $\beta_c$ inherit an ordering from the global ordering of the branch points. With this ordering, these transpositions must  have weakly monotone maxima. 
  
  We recall  that for $\alpha\vdash n$, the conjugacy class $C_\alpha$ gathers the permutations in $S_n$ whose cycle-type is $\alpha$, that $m_C$ and $p_C$  were respectively defined in  Thm.~\ref{thm:1Nexpansion-Weingarten-Cumulants} and Prop.~\ref{def:PC}, as well as the definition of higher order monotone double Hurwitz numbers \eqref{eq:col-hurwitz-def}:
  \be
\label{eq:col-hurwitz-def-2-1}
\vec H^l(\{\alpha_c, \beta_c\}_c) = \sum_{c=1}^D \, \sum_{(\sigma_c, \tau_c)\, \in\, C_{\alpha_c}\times C_{\beta_c}} p_C[\bsig, \btau ; l]\;. 
\ee
 We also define the following generalization of the Bousquet-Melou--Schaeffer numbers \cite{BM-Schaeff}:
   \be
\label{eq:col-hurwitz-def-2}
\vec {BS}^l_k(\{\alpha_c, \beta_c\}_c) = \sum_{c=1}^D \, \sum_{(\sigma_c, \tau_c)\, \in\, C_{\alpha_c}\times C_{\beta_c}} m_C(\bsig, \btau ; l,k)\;. 
\ee

\begin{Cor} 
\label{cor:Gen-Hurwitz-nodal}
Fixing $D,k\ge 1$, $\mathcal H \ge 0$, and for all $c\in\{1,\ldots,D\}$, $\alpha_c, \beta_c \vdash n$ non-trivial and  $\sigma_c \in C_{\alpha_c}$, $\tau_c^{-1} \in C_{\beta_c}$, and defining:
\be
\label{eq:RiemHurw-nodal}
l  = \sum_c \left[ \#(\alpha_c) + \#(\beta_c)\right]  + 2\mathcal H  - 2 - 2n(D-1),
\ee
\begin{enumerate}
\item The cardinal of  $\mathfrak{C}_{\mathcal H}[\{\alpha_c, \beta_c\}_c, k]$ is $\frac 1 {n!}\vec {BS}^l_k(\{\alpha_c, \beta_c\}_c)$.
\item The cardinal of $\mathfrak{B}_{\mathcal H}[\{\alpha_c, \beta_c\}_c, k]$  is $\frac 1 {n!} \vec H^k(\{\alpha_c, \beta_c\}_c)$ and $k=l$ as defined in \eqref{eq:RiemHurw-nodal}.
\end{enumerate}
In both cases, the total number of singular points is given by:
\be 
nk-l + \sum_{c}\#(\alpha_c) + \#(\beta_c)  = 2 - 2\mathcal{H} + n(k+D-1).
\ee
\end{Cor}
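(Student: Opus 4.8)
The plan is to reduce the whole statement to the correspondence of Theorem~\ref{th:bij-coverings-topoCons} between isomorphism classes of connected branched coverings of the bouquet $Y^\bullet$ of the $D$ spheres $S_c$ and (equivalence classes of) systems of permutations $\eta^c_0,\ldots,\eta^c_{k_c+1}$ with $\eta^c_0\cdots\eta^c_{k_c+1}=\mathrm{id}$ and $|\Pi(\bethat)|=1$, and then to match, one condition at a time, the geometric data defining $\mathfrak{C}_{\mathcal H}$ and $\mathfrak{B}_{\mathcal H}$ with the combinatorial data counted by $m_C$ (Thm.~\ref{thm:1Nexpansion-Weingarten-Cumulants}) and $p_C$ (Prop.~\ref{def:PC}). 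The genus bookkeeping is then forced by the Euler and arithmetic-genus formulas, and the factor $1/n!$ is accounted for by the relabeling quotient.

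First I would fix the dictionary. For a covering in $\mathfrak{C}_{\mathcal H}$ encoded by $\{\eta^c_i\}$ I set $\sigma_c=(\eta^c_0)^{-1}$, $\tau_c=\eta^c_{k_c+1}$ and $\rho^c_{i_c}=\eta^c_{i_c}$ for $1\le i_c\le k_c$; then $\eta^c_0\cdots\eta^c_{k_c+1}=\mathrm{id}$ gives $\sigma_c\tau_c^{-1}=\rho^c_1\cdots\rho^c_{k_c}$. Under this dictionary the prescribed ramification profiles $\alpha_c,\beta_c$ at the first and last points become $\sigma_c\in C_{\alpha_c}$, $\tau_c^{-1}\in C_{\beta_c}$; the requirement that there be \emph{precisely} $k+2D$ branch points becomes $\rho^c_{i_c}\neq\mathrm{id}$ (and the degenerate case $k_c=0$ forces $\sigma_c=\tau_c$, exactly as in $m_C$); and connectedness of the covering is the transitivity of the whole system, by Lemma~\ref{Lem:transitivity-nodal}. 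These are precisely the systems enumerated by $m_C(\bsig,\btau;l,k)$, so summing over the conjugacy classes produces the labeled count $\vec{BS}^l_k(\{\alpha_c,\beta_c\}_c)$.

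Next I would identify $l$ with $\mathcal H$. Applying the Euler relation \eqref{eq:Euler-constellations-lengths} to each $(k_c+2)$-constellation $\hat\eta^c$ (whose product is the identity) yields $2g(\hat\eta^c)-2|\Pi(\hat\eta^c)|=l_c-\#(\alpha_c)-\#(\beta_c)$ with $l_c=\sum_{i_c}\lVert\rho^c_{i_c}\rVert$; feeding this into the arithmetic-genus formula \eqref{eq:nodal-genus2} for the connected covering space ($|\Pi(\bethat)|=1$) collapses to the generalized Riemann--Hurwitz relation \eqref{eq:RiemHurw-nodal}, so that fixing $\mathcal H$ is the same as fixing $l$. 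This establishes point (1). For point (2) I add the two defining features of $\mathfrak{B}_{\mathcal H}$: simple ramification of the free branch points forces each $\rho^c_{i_c}$ to be a transposition, whence $l=\sum_c k_c=k$, and the monotonicity condition on their ordering is verbatim the weakly-monotone-maxima condition of Def.~\ref{def:weakly-monotone}. Thus these systems are exactly those counted by $p_C[\bsig,\btau;l]$ in Prop.~\ref{def:PC}, and summing over conjugacy classes gives $\vec H^k(\{\alpha_c,\beta_c\}_c)$ with $k=l$ as in \eqref{eq:RiemHurw-nodal}.

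The prefactor $1/n!$ arises from passing from labeled systems to isomorphism classes: $S_n$ acts by simultaneous conjugation, i.e.\ by simultaneous relabeling of the $n$ preimages of the node, and an isomorphism class $X$ is an orbit of size $n!/|\mathrm{Aut}(X)|$, so $\tfrac1{n!}\vec{BS}^l_k=\sum_X 1/|\mathrm{Aut}(X)|$ is the automorphism-weighted count of coverings, consistent with the convention fixed in Sec.~\ref{sub:Hurwitz}; when all $\mathrm{Aut}(X)$ are trivial this is the plain cardinal. Finally the singular-point count is pure bookkeeping: the singular points are the flavored vertices, whose total number is $\sum_c\sum_i\#(\eta^c_i)=\sum_c(\#(\alpha_c)+\#(\beta_c))+nk-l$, and substituting \eqref{eq:RiemHurw-nodal} rewrites this solely in terms of $n,k,D,\mathcal H$, yielding the stated identity (equivalently obtained by computing $\chi(X^\bullet)$ from the pulled-back cell structure of $Y^\bullet$). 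I expect the main obstacle to be neither the bijection nor the genus arithmetic, which are forced, but the careful verification that the ``precisely $k+2D$ branch points'' and monotonicity conditions align exactly with the properness and weakly-monotone structure underlying $m_C$ and $p_C$, together with the correct interpretation of ``cardinal'' as the $1/n!$-weighted count.
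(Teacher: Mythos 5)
Your proof is correct and follows essentially the same route as the paper: invoke Theorem~\ref{th:bij-coverings-topoCons} to identify isomorphism classes of coverings with transitive systems $\mathrm{id}=\sigma_c^{-1}\eta^c_1\cdots\eta^c_{k_c}\tau_c$ counted by $m_C$ (resp.\ $p_C$ after imposing transpositions with weakly monotone maxima), then fix $l$ in terms of $\mathcal H$ by combining the Euler relation \eqref{eq:Euler-constellations-lengths} with the arithmetic-genus formula \eqref{eq:nodal-genus2}. The one point where you go slightly beyond the paper is in flagging that $\tfrac{1}{n!}$ times the labeled count is a priori the automorphism-weighted count $\sum_X 1/\lvert\mathrm{Aut}(X)\rvert$ rather than a plain cardinal --- a subtlety the paper acknowledges for $D=1$ in Sec.~\ref{sub:Hurwitz} but does not revisit in the proof of the corollary.
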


The relation \eqref{eq:RiemHurw-nodal} should be compared to the Riemann-Hurwitz formula \eqref{eq:def_Hurw_num}.

\proof Let $f:X^\bullet\rightarrow Y^\bullet$ be an element of $\mathfrak{C}_{\mathcal H}[\{\alpha_c, \beta_c\}_c, k]$, where $Y^\bullet$ is a bouquet of $D$ distinguishable 2-spheres $S_c$, $c\in \{1,\ldots,D\}$. Then there exists $k_1, \ldots, k_D \ge 0$ such that $\sum_c k_c =k$ and for each $c$, $k_c+2$ of the (ordered) branch points belong to $S_c$, and the first and last respectively have ramification profiles $\alpha_c$ and $\beta_c$. From Thm.~\ref{th:bij-coverings-topoCons} and its proof, $f$ is bijectively mapped to $D$ ordered sequences of  permutations:    $$\hat \eta^c = (\sigma_c^{-1}, \eta^c_1, \ldots, \eta^c_{k_c}, \tau_c) \in S_n^{k_c+2},\quad \mathrm{s.t.}\quad \mathrm{id}=\sigma_c^{-1}\eta^c_1\cdots \eta^c_{k_c}\tau_c,$$  $c\in\{1, \ldots,D\}$, such that $\sigma_c\in C_{\alpha_c}$ and $\tau_c\in C_{\beta_c}$, and  the group generated by all the permutations is transitive on $\{1,\ldots, n\}$, up to simultaneous relabelings of $1,\ldots,n$ for all $c$, and so that the arithmetic genus of the nodal topological constellation encoded by this system of permutations is $\mathcal H$. This explains that  $\mathrm{Card}\mathfrak{C}_{\mathcal H}[\{\alpha_c, \beta_c\}_c, k]= \frac 1 {n!}\vec {BS}^l_k(\{\alpha_c, \beta_c\}_c)$ if we show that $\sum_{c=1}^D\sum_{i=1}^{k_c} \lVert \eta_i^c\rVert$ is given by the right hand side of \eqref{eq:RiemHurw-nodal}.
    
    From \eqref{eq:nodal-genus2}, 
$\mathcal H = \sum_{c=1}^D \left( g(\hat \eta^c) - \lvert \Pi(\hat \eta^c)\rvert \right)+ n(D-1)+ 1$, and from the Euler characteristics \eqref{eq:Euler-constellations-lengths} of $\hat \eta^c$, $2\left(g(\hat \eta^c) - \lvert \Pi(\hat \eta^c)\rvert \right) = \sum_{i=1}^{k_c} \lVert \eta_i^c\rVert - \#(\sigma_c) - \#(\tau_c)$, so that:
$$
\sum_{c=1}^D\sum_{i=1}^{k_c} \lVert \eta_i^c\rVert = \sum_{c=1}^D\left[ \#(\sigma_c) + \#(\tau_c) \right] + 2 (\mathcal H -n(D-1)  - 1 ),
$$
which proves the first point of the corollary. For the elements of $\mathfrak{B}_{\mathcal H}[\{\alpha_c, \beta_c\}_c, k]$, $\eta^c_1, \ldots, \eta^c_{k_c}$ are transpositions with weakly monotone maxima, and the total number $k$ of these transposition is also $\sum_{c=1}^D\sum_{i=1}^{k_c} \lVert \eta_i^c\rVert$. This concludes the proof.

\qed

\begin{Rk} For the case $D=2$, the enumeration of isomorphism classes of branched covers of a bouquet of two 2-spheres should be relevant in the context of compactifications of the moduli spaces of curves such as the Deligne-Mumford compactification, where the necessity to include degenerated cycles  implies considering nodal surfaces where at each node only two surfaces meet \cite{CountingSurfaces, CavalieriMiles, Zvonkine, LandoZvonkine2}. 
\end{Rk}

\begin{Rk} We have presented a geometrical interpretation based on nodal surfaces. From the colored structure, the reader familiar with the literature on colored triangulations and random tensor models will recognize a combinatorial encoding that recalls that of colored triangulations in dimension two and higher. This begs for an interpretation in terms of higher dimensional objects, instead of nodal surfaces, but we leave this for future work.
\end{Rk}

\subsection{The $1/N$ expansions as topological expansions
}
\label{sub:Geom-descr-pC}

The aim of this subsection is to provide a combinatorial and geometric interpretation to the formulas of Theorem \ref{thm:asympt-cum-weing}. The transpositions $\mu^c_i$ in the combinatorial definition of $p_C$ (Prop.~\ref{def:PC}) do not appear for instance  in \eqref{formula-gen-l}: the intuition is that we should try to keep all the $\sigma_c, \tau_c$ fixed on one hand, and ``resum'' the contributions of all the $\mu^c_i$ on the other hand, in some way. 
To this aim, given $D$ sequences of permutations  $\sigma_c, \tau_c, \eta^c_1, \ldots, \eta^c_{k_c}$ for $c\in\{1,\ldots, D\}$ such that $\id=\sigma_c^{-1} \eta^c_1 \cdots \eta^c_{k_c} \tau_c$, instead of considering the  nodal topological constellation encoded by the $(\sigma_c^{-1},  \eta^c_1,  \ldots,  \eta^c_{k_c},  \tau_c)$ for all $c$ as in Corollary~\ref{cor:Gen-Hurwitz-nodal}, we will rather consider a new kind of isomorphism class of nodal surfaces from the nodal topological constellation $\cS(\bsig, \btau)$ on one hand, and the $D$ topological constellations $\hat \eta^c=(\eta^c_1,  \ldots,  \eta^c_{k_c})$ on the other hand.

\subsubsection{Nodal surfaces for (\TitleGsig, \TitleGtau, $ \bethat$)
} 

We fix  $\bsig, \btau\in \bS_n$ as well as $k,l\ge 0$, and for $c\in \{1,\ldots, D\}$, we let $\hat \eta^c=(\eta^c_1, \ldots, \eta^c_{k_c})$ be a $k_c$-constellation, $k_c\ge 0$, such that $\sum_{c=1}^Dk_c =k $  and:
\be
\label{eq:colorwise-constellation-bis}
\forall c  \; , \;\;  \sigma_c\tau_c^{-1}=\eta^c_{1}\cdots \eta^c_{k_c} \;.
\ee
subject to the conditions:
\begin{enumerate}[label=(C\arabic*)]
\item the collection of all $\{\hat\eta^c,\tau_c\}_c$  acts transitively on $\{1,\ldots , n\}$,
\item $\sum_{c=1}^D\sum_{i_c=1}^{k_c} \lVert\eta^c_{i_c}\rVert =l$. 
\end{enumerate}
This data defines:
\begin{itemize}
    \item  a (non-necessarily connected) nodal topological constellation $\cS(\bsig, \btau)$ as defined in Sec.~\ref{sub:def-nodal} (see Fig.~\ref{fig:nodal-surface-1}),
   \item a (non-necessarily connected) topological $k_c$-constellation $\hat \eta^c$ for each $c\in\{1, \ldots, D\}$. 
\end{itemize}

Since $ \nu_c=\sigma_c\tau_c^{-1}=\eta^c_{1}\cdots \eta^c_{k_c}$, the disjoint cycles  of $\sigma_c\tau_c^{-1}$ and the disjoint cycles of $\eta^c_{1}\cdots \eta^c_{k_c}$ match, so that for every nodal embedded graph $\Gamma^\bullet \subset X^\bullet$ in the isomorphism class $\cS(\bsig, \btau)$ and every embedded graphs $\Gamma_1\subset Y_1, \ldots,\Gamma_D\subset  Y_D$ in the isomorphisms classes $\hat \eta^1, \ldots, \hat \eta^D$, there is a one-to-one correspondence $\Psi$ between the faces $F_1, \ldots, F_{nk-l}$ of $\Gamma^\bullet \subset X^\bullet$ (the connected components of the complement of the graph $\Gamma^\bullet$ in the nodal surface $ X^\bullet$), and the faces  $F'_1, \ldots, F'_{nk-l}$ of the $\Gamma_c\subset Y_c$ for $c\in\{1, \ldots, D\}$, where the labelings are chosen so that 
$\Psi(F_j) = F'_j$. To render this pairwise identification obvious, we choose for each $j\in\{1,\ldots, nk-l\}$ two points $v_j$ and $v'_j$ respectively in the interiors of $F_j$ and  $F'_j$, and we consider the nodal surface $Z^\bullet= (X^\bullet \sqcup_{c=1}^D Y_c) / \{P_j\}_j  $ (together with the graphs $\Gamma^\bullet$ and $\Gamma_c$ drawn on $Z^\bullet$).

We then call $\cS(\bsig, \btau, \bethat)$ the isomorphism class of such objects, where by isomorphisms we mean the homeomorphisms of nodal surfaces that induce an isomorphism of embedded graph on each irreducible component, and preserve the incidence between the nodes $P_j$ and the faces $F_j$ and $F'_j$, in the sense that if a node $P_j$ belongs to the interior of the faces $F_j$ and $F'_j$, then the image of the node also belongs to  the interior of the images of the faces. 

An example is shown in Fig.~\ref{fig:big-nodal-surface}, where the nodal points in the interior of the faces are represented by dotted edges (whereas we recall that the nodes of $\cS(\bsig, \btau)$ are represented by dashed edges linking triangular vertices).

\ 

In this context, the graph $ G \big[\Pi(\bsig, \btau) , \{\pi_c\}_c ; \{ \Pi(\nu_c) \}_c \big] $ for $\pi_c=\Pi(\hat \eta^c)$ and $\nu_c=\sigma_c\tau_c^{-1}$ introduced in Sec.~\ref{sub:proof-of-the-theorem} is simply obtained by {\it contracting} the connected components of the nodal surface $\cS(\bsig, \btau)$ (not its irreducible components!) and those of each constellation $\hat\eta^c$ to points. This retains the information on which faces $F_j$ and $F'_j$ identified by $\Psi$ are in the same connected component of  $\cS(\bsig, \btau)$ on one hand, and which ones are in the same connected component of $\hat \eta^c$ on the other (this is why it only depends on the associated \emph{partitions}). The graph $ G \big[\Pi(\bsig, \btau) , \{\pi_c\}_c ; \{ \Pi(\nu_c) \}_c \big] $ for the example in Fig.~\ref{fig:big-nodal-surface} is the one in Fig.~\ref{fig:ex-incidence-graph}.

\begin{figure}[!h]
\centering
\includegraphics[scale=0.65]{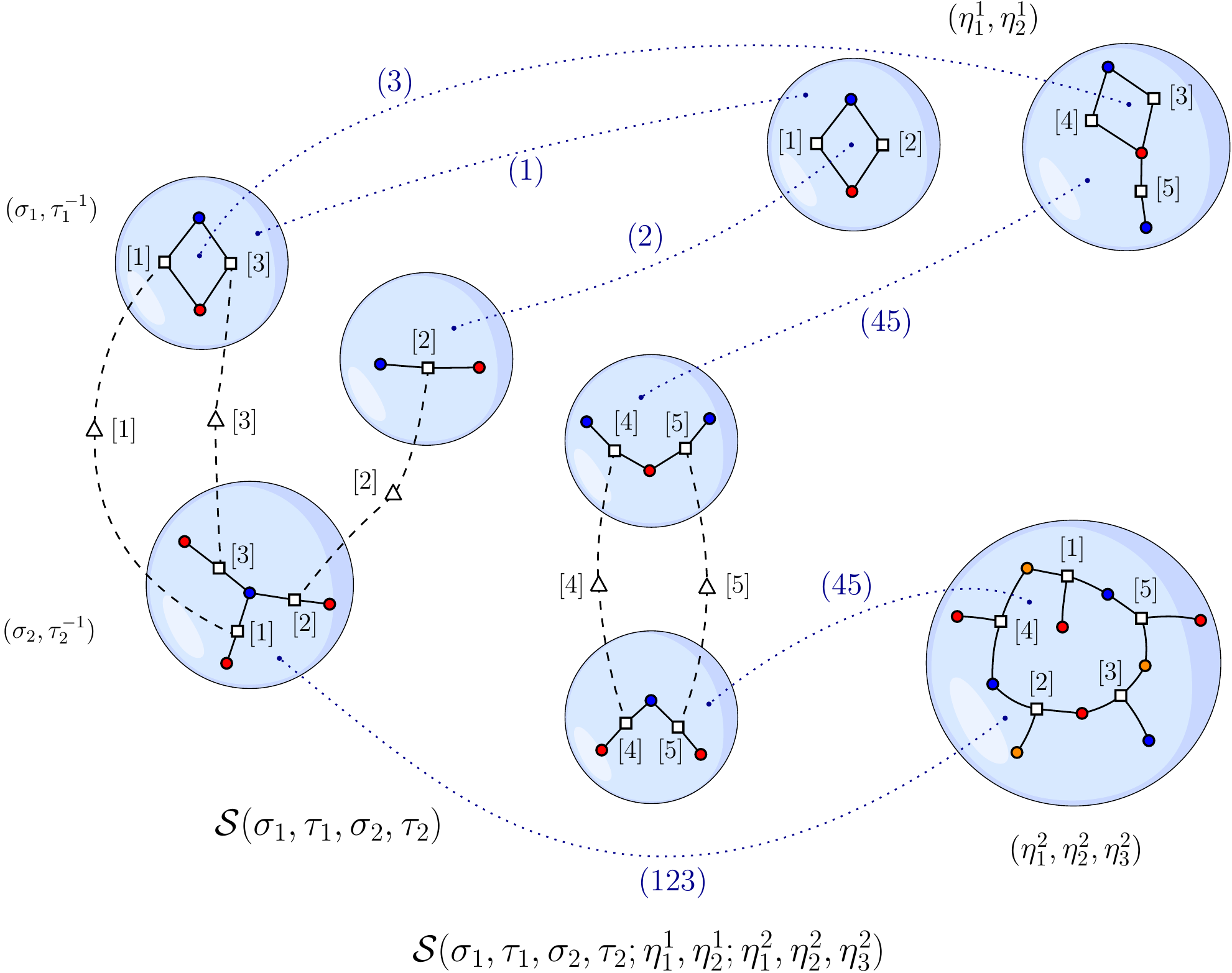} 
\caption{Graphical representation  of $\cS(\bsig,\btau;\bethat )$ for an example in $D=2$, $n=5$, where $\cS(\bsig, \btau)$ is as in Fig.~\ref{fig:nodal-surface-1}, and $\eta^1_{1}=(12)(34)(5)$, $\eta^1_{2}=(12)(345)$, $\eta^2_{1}=(15)(24)(3)$, $\eta^2_{2}=(1)(23)(4)(5)$, and $\eta^2_{3}=(14)(2)(35)$. The colors for the constellations  are blue for 1, red for 2, orange for 3. The dotted lines representing the pairwise identification of faces are labeled with the corresponding cycle.
}
\label{fig:big-nodal-surface}
\end{figure}

\begin{Lem}
\label{Lem:connectivity-big-nodal}
The number of connected components of  $\cS(\bsig,\btau,\bethat )$ is the number of connected components of $ G \big[\Pi(\bsig, \btau) , \{\Pi(\hat \eta_c)\}_c ; \{ \Pi(\nu_c) \}_c \big] $, which is also the number of transitivity classes of the group generated by all $\bsig,\btau,\bethat$, namely  $\lvert\Pi\bigl(\bsig,\btau,\bethat)\rvert$. Imposing transitivity in (C1) is imposing that $\cS(\bsig,\btau,\bethat )$ is connected. 
\end{Lem}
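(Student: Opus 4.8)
The plan is to reduce the statement to the purely combinatorial Lemma~\ref{lem:lemcon} by recognizing $ G \big[\Pi(\bsig,\btau),\{\Pi(\hat\eta^c)\}_c;\{\Pi(\nu_c)\}_c\big]$ as the graph obtained from $\cS(\bsig,\btau,\bethat )$ by contracting each of its maximal connected pieces to a point. First I would recall that the nodal surface $Z^\bullet$ underlying $\cS(\bsig,\btau,\bethat )$ carries two kinds of nodes: the $n$ white-vertex nodes that already glue the bipartite maps $(\sigma_c,\tau_c^{-1})$ into $X^\bullet=\cS(\bsig,\btau)$, and the face-matching nodes $P_j$ that glue a point in a face of $X^\bullet$ to a point in the face of some $Y_c$ paired with it by $\Psi$. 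Since quotienting a disjoint union of connected subcomplexes each to its own point leaves the number of connected components unchanged, I would contract each connected component of $X^\bullet$ and each connected component of every $Y_c$ to a single point; then $\cS(\bsig,\btau,\bethat )$ and the resulting graph have the same number of connected components.

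Next I would identify this contracted graph with $ G\big[\Pi(\bsig,\btau),\{\Pi(\hat\eta^c)\}_c;\{\Pi(\nu_c)\}_c\big]$. By Lemma~\ref{Lem:transitivity-nodal} the connected components of $X^\bullet=\cS(\bsig,\btau)$ are indexed by the blocks of $\Pi(\bsig,\btau)$ and those of $Y_c$ by the blocks of $\Pi(\hat\eta^c)$; after contraction these become respectively the white and the $c$-colored vertices of $G$. The white-vertex nodes of $\cS(\bsig,\btau)$ only ever join irreducible components lying inside $X^\bullet$, so each becomes internal to a single contracted point and contributes no edge. The face-matching nodes $P_j$, on the other hand, are in bijection with the faces of the bipartite maps, that is with the cycles of the $\nu_c$, equivalently with the blocks of $\bigsqcup_c\Pi(\nu_c)$; each such block $b_c$ is a $c$-colored edge of $G$. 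Because $\nu_c=\sigma_c\tau_c^{-1}$ lies in the group generated by $\{\sigma_c,\tau_c\}$ one has $\Pi(\bsig,\btau)\ge\Pi(\nu_c)$, and because $\nu_c=\eta^c_1\cdots\eta^c_{k_c}$ one has $\Pi(\hat\eta^c)\ge\Pi(\nu_c)$; moreover $\Psi$ is colour-preserving precisely because $\sigma_c\tau_c^{-1}=\eta^c_1\cdots\eta^c_{k_c}$ matches cycles colour by colour. Hence the face carrying $b_c$ lies in the component $B(b_c)$ of $X^\bullet$ and its matched face lies in the component $B_c(b_c)$ of $Y_c$, so $P_j$ becomes exactly the edge of $G$ joining $B(b_c)$ to $B_c(b_c)$. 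This reproduces $G$ vertex for vertex and edge for edge.

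Finally I would invoke Lemma~\ref{lem:lemcon} with $\Pi=\Pi(\bsig,\btau)$, $\pi_c=\Pi(\hat\eta^c)$ and $\Pi_c=\Pi(\nu_c)$, whose hypotheses $\Pi\ge\Pi_c$ and $\pi_c\ge\Pi_c$ were just verified, to conclude that $G$ has $\lvert\Pi(\bsig,\btau)\vee\Pi(\hat\eta^1)\vee\cdots\vee\Pi(\hat\eta^D)\rvert$ connected components. Since the join of the transitivity partitions of several groups is the transitivity partition of the group they jointly generate, this join equals $\Pi(\bsig,\btau,\bethat)$. The three assertions then follow at once: the number of connected components of $\cS(\bsig,\btau,\bethat )$ equals that of $G$, equals $\lvert\Pi(\bsig,\btau,\bethat)\rvert$, and connectedness of $\cS(\bsig,\btau,\bethat )$ is equivalent to $\lvert\Pi(\bsig,\btau,\bethat)\rvert=1$, i.e. to the transitivity condition (C1).

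The main obstacle I anticipate is the bookkeeping in the middle step: one must keep the two node types genuinely distinct — the white-vertex nodes act only within $X^\bullet$ and are absorbed by the contraction, whereas only the $P_j$ survive as the edges of $G$ — and one must attach each surviving $P_j$ to the correct pair of contracted vertices, which is exactly what the relations $\Pi(\bsig,\btau)\ge\Pi(\nu_c)\le\Pi(\hat\eta^c)$ together with the colour-preserving property of $\Psi$ guarantee. Once this identification of the contraction with $G$ is pinned down, the rest is a direct application of Lemmas~\ref{Lem:transitivity-nodal} and~\ref{lem:lemcon}.
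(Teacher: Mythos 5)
Your proof is correct and follows essentially the route the paper intends: the paper leaves this lemma without a formal proof, relying on the preceding paragraph's observation that $G\big[\Pi(\bsig,\btau),\{\Pi(\hat\eta^c)\}_c;\{\Pi(\nu_c)\}_c\big]$ is obtained from $\cS(\bsig,\btau,\bethat)$ by contracting the connected components of $\cS(\bsig,\btau)$ and of each $\hat\eta^c$ to points, combined with Lemma~\ref{lem:lemcon} and Lemma~\ref{Lem:transitivity-nodal}. Your write-up simply makes that contraction argument explicit (correctly distinguishing the absorbed triangular nodes from the surviving face-matching nodes $P_j$), so nothing further is needed.
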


\begin{Lem}
\label{Lem:diff-arithm-genus-and-l}
With the notations \eqref{eq:colorwise-constellation-bis}, $l$ is related to the arithmetic genus $\mathcal G(\bsig, \btau, \bethat)$ of $\cS(\bsig,\btau,\bethat )$ by:
\be 
l=\sum_{c=1}^D\sum_{i=1}^{k_c} \lVert \eta^c_i \rVert = \sum_{c=1}^D\left[\#(\sigma_c) + \#(\tau_c)\right] + 2\mathcal G(\bsig, \btau, \bethat) - 2 - 2n(D-1).
\ee 
In particular, using \eqref{eq:relation-genus-ell-1}:
\be 
l - \ell(\bsig, \btau) = 2\bigl( \mathcal  G(\bsig, \btau, \bethat) - \mathcal  G(\bsig, \btau)\bigr).
\ee
Fixing $l$ in (C2) therefore amounts to fixing the arithmetic genus of $\cS(\bsig, \btau, \bethat)$ .
\end{Lem}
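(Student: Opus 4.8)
The plan is to compute the arithmetic genus $\mathcal G(\bsig, \btau, \bethat)$ of $Z^\bullet=\cS(\bsig, \btau, \bethat)$ straight from its definition \eqref{eq:nodal-genus}, by reading off the nodal data of $Z^\bullet$, and then to eliminate all genera and block-counts by inserting the Euler characteristic \eqref{eq:Euler-constellations} of each of the $2D$ constituent (nodal) constellations. First I would list the data of $Z^\bullet$ as a nodal surface. Its irreducible components are those carrying the $D$ bipartite maps $(\sigma_c,\tau_c^{-1})$, of which there are $\sum_c\lvert\Pi(\sigma_c,\tau_c^{-1})\rvert$, together with those carrying the $D$ constellations $\hat\eta^c$, of which there are $\sum_c\lvert\Pi(\hat\eta^c)\rvert$; hence the number of components is $p=\sum_c\big(\lvert\Pi(\sigma_c,\tau_c^{-1})\rvert+\lvert\Pi(\hat\eta^c)\rvert\big)$ and the geometric genus is $\sum_c\big(g(\sigma_c,\tau_c^{-1})+g(\hat\eta^c)\big)$. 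Its nodes come in two kinds: the $n$ nodes of $\cS(\bsig, \btau)$, each identifying the $D$ white vertices of one fixed label and thus of cardinality $D$; and the face-matching nodes $P_j$, one for each common face. Since $\nu_c=\sigma_c\tau_c^{-1}=\eta^c_1\cdots\eta^c_{k_c}$, the faces of $(\sigma_c,\tau_c^{-1})$ and of $\hat\eta^c$ are both indexed by the cycles of $\nu_c$, so there are $\sum_c\#(\nu_c)$ such nodes, each of cardinality two.

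Next, since condition (C1) forces $Z^\bullet$ to be connected (Lemma~\ref{Lem:connectivity-big-nodal}), I apply \eqref{eq:nodal-genus} in its connected form, with $\mathcal L=\sum_j(\mathrm{Card}(P_j)-1)-p+1$. The node contribution is $\sum_j(\mathrm{Card}(P_j)-1)=n(D-1)+\sum_c\#(\nu_c)$, so that
\[
\mathcal G(\bsig, \btau, \bethat)=\sum_{c=1}^D\big(g(\sigma_c,\tau_c^{-1})+g(\hat\eta^c)\big)+n(D-1)+\sum_{c=1}^D\#(\nu_c)-\sum_{c=1}^D\big(\lvert\Pi(\sigma_c,\tau_c^{-1})\rvert+\lvert\Pi(\hat\eta^c)\rvert\big)+1 .
\]

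Then I substitute the Euler characteristics of the two families. For the $2$-constellations this is \eqref{eq:Eulersigmatau}, giving $2\big(g(\sigma_c,\tau_c^{-1})-\lvert\Pi(\sigma_c,\tau_c^{-1})\rvert\big)=\lVert\nu_c\rVert-\#(\sigma_c)-\#(\tau_c)=n-\#(\nu_c)-\#(\sigma_c)-\#(\tau_c)$; for the $k_c$-constellation $\hat\eta^c$, \eqref{eq:Euler-constellations} applied with product $\nu_c$ gives $2\big(g(\hat\eta^c)-\lvert\Pi(\hat\eta^c)\rvert\big)=\sum_{i}\lVert\eta^c_i\rVert-n-\#(\nu_c)=l_c-n-\#(\nu_c)$, where $l_c=\sum_{i}\lVert\eta^c_i\rVert$. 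Inserting both, the per-colour terms $\pm n$ and the terms $\pm\sum_c\#(\nu_c)$ cancel, leaving $2\mathcal G(\bsig, \btau, \bethat)=l-\sum_c\big(\#(\sigma_c)+\#(\tau_c)\big)+2n(D-1)+2$, i.e.\ the claimed identity after using $\sum_c l_c=l$. The two displayed consequences are then immediate: subtracting the relation \eqref{eq:relation-genus-ell-1} for $\mathcal G(\bsig, \btau)$ gives $l-\ell(\bsig, \btau)=2\big(\mathcal G(\bsig, \btau, \bethat)-\mathcal G(\bsig, \btau)\big)$, and since $l$ and $\mathcal G(\bsig, \btau, \bethat)$ differ by a quantity depending only on $\bsig,\btau$, fixing one in (C2) fixes the other.

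The only delicate point is the node bookkeeping: establishing that the face-matching nodes number exactly $\sum_c\#(\nu_c)$ (one per cycle of each $\nu_c$, matched through $\Psi$) and each has cardinality two, and that (C1) makes $Z^\bullet$ connected so that the $+1$ rather than $+\lvert\Pi(\cdots)\rvert$ variant of \eqref{eq:nodal-genus} is the correct one. Everything after that is the elementary cancellation above, which I would verify on the worked example of Fig.~\ref{fig:big-nodal-surface}: there $\mathcal G(\bsig, \btau, \bethat)=4$, $\mathcal G(\bsig, \btau)=2$, $l=10$ and $\ell(\bsig, \btau)=6$, so indeed $l-\ell=4=2(4-2)$.
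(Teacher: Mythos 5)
Your proof is correct and follows essentially the same route as the paper's: read off the nodal data of $\cS(\bsig,\btau,\bethat)$ (geometric genus, $n$ nodes of cardinality $D$, $\sum_c\#(\nu_c)$ face-matching nodes of cardinality $2$, and the component count), apply \eqref{eq:nodal-genus} in its connected form, and then eliminate the genera and component counts via the Euler characteristics \eqref{eq:Euler-constellations} of the $\hat\eta^c$ and \eqref{eq:Eulersigmatau} of the $(\sigma_c,\tau_c^{-1})$. The only difference is that you spell out the node bookkeeping and the numerical check on Fig.~\ref{fig:big-nodal-surface} more explicitly than the paper does, which is harmless.
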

\proof From the definition \eqref{eq:nodal-genus}, $\mathcal G(\bsig, \btau, \bethat)$ reads:
$$
\mathcal G(\bsig, \btau, \bethat) = \sum_{c=1}^D \left(g(\sigma_c, \tau_c^{-1}) + g(\hat \eta^c) \right) + nD + \sum_{c=1}^D\#(\sigma_c\tau_c^{-1}) - n - \sum_{c=1}^D\left( \lvert \Pi(\sigma_c, \tau_c^{-1}) \rvert + 
\lvert \Pi(\hat \eta^c) \rvert \right) + 1. $$ Summing the Euler characteristics \eqref{eq:Euler-constellations} of $\hat \eta^c$:
$$
\sum_{c=1}^D \left( g(\hat \eta^c) -\lvert \Pi(\hat \eta^c) \rvert \right) =  \frac 1 2 (l - nD)- \frac 1 2   \sum_{c=1}^D \#(\sigma_c\tau_c^{-1})
$$  
The result follows using the Euler characteristics of $(\sigma_c, \tau_c^{-1})$ \eqref{eq:Eulersigmatau}.

 \qed

\ 




From these lemmas (for the point 1 of the proposition) as well as the results of Sec.~\ref{sub:Coverings} (for the point 2 of the proposition):

\begin{Prop} 
\label{prop:Topological-Expansion}
The $1/N$ expansion of the cumulant Weingarten functions in Thm.~\ref{thm:1Nexpansion-Weingarten-Cumulants} can be seen as a topological expansion:
\[
W_C^{(N)}[\bsig, \btau ]= (-1)^{\ell(\bsig, \btau)} N^{2\mathcal G(\bsig, \btau) - \ell(\bsig, \btau) - nD}  \sum_{\mathcal G \ge \mathcal G(\bsig, \btau)} \;\frac 1 {N^{2\mathcal G}} 
\,p_C [\bsig , \btau ; \ell(\bsig, \btau) + 2\left( \mathcal G - \mathcal G(\bsig, \btau)\right)]  \; , 
\]
where:
\[
(-1)^{\ell(\bsig, \btau)} p_C [\bsig , \btau ;l] =  \sum_{k\ge 0}\;(-1)^{k}\,m_C( \bsig , \btau ; l, k) \; , 
\]
and $m_C( \bsig , \btau ; \ell(\bsig, \btau) + 2\left( \mathcal G - \mathcal G(\bsig, \btau)\right), k)$ counts both:
\begin{enumerate}
\item The isomorphism classes of nodal surfaces $\cS(\bsig, \btau, \bethat)$, with $\bethat=(\hat \eta^1, \ldots, \hat \eta^D)$ as in \eqref{eq:colorwise-constellation-bis},
\item The nodal topological constellations encoded by $\bethat ' =(\hat {\eta'}^1, \ldots, \hat {\eta'}^D)$ where $\hat {\eta'}^c=(\sigma_c^{-1}, \hat \eta^c, \tau_c)$,
\end{enumerate}
and in both cases, the spaces are \textbf{connected} and of \textbf{fixed arithmetic genus} $\mathcal G \ge \mathcal G (\bsig, \btau)$, and so that the vertices of flavor $i$ of the $\hat \eta^c$ are not all of valency one. 
$p_C[\bsig, \btau; l]$ counts the subset of the spaces listed above for which the $\hat \eta^c$ consist of sequences of transpositions with weakly monotone maxima (this also translates on a condition on the flavored vertices).
\end{Prop}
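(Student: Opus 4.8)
The statement is essentially a repackaging of Thm.~\ref{thm:1Nexpansion-Weingarten-Cumulants}, so the plan is to first re-index the $1/N$ expansion by arithmetic genus and then to match the coefficients $m_C$ with the two families of nodal objects. Starting from $W_C^{(N)}[\bsig,\btau]=N^{-nD}\sum_{l\ge0}(-1)^l p_C[\bsig,\btau;l]N^{-l}$, I would note that each nonzero summand of $p_C[\bsig,\btau;l]$ (through $m_C$) corresponds to a $D$-uplet $\bethat$ obeying (C1)--(C2), to which Lemma~\ref{Lem:diff-arithm-genus-and-l} attaches the arithmetic genus via $l-\ell(\bsig,\btau)=2\bigl(\mathcal G(\bsig,\btau,\bethat)-\mathcal G(\bsig,\btau)\bigr)$. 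Hence $p_C[\bsig,\btau;l]=0$ unless $l\ge\ell(\bsig,\btau)$ (the bound of Thm.~\ref{thm:asympt-cum-weing}) and $l\equiv\ell(\bsig,\btau)\bmod 2$, which lets me substitute $l=\ell(\bsig,\btau)+2(\mathcal G-\mathcal G(\bsig,\btau))$ and sum over $\mathcal G\ge\mathcal G(\bsig,\btau)$. Since $l-\ell(\bsig,\btau)$ is even, $(-1)^l=(-1)^{\ell(\bsig,\btau)}$ and $N^{-nD-l}=N^{2\mathcal G(\bsig,\btau)-\ell(\bsig,\btau)-nD}N^{-2\mathcal G}$, producing the displayed prefactor and the sign relation $(-1)^{\ell(\bsig,\btau)}p_C=\sum_k(-1)^k m_C$.

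Next I would unwind $m_C(\bsig,\btau;l,k)$. By Thm.~\ref{thm:1Nexpansion-Weingarten-Cumulants} it counts the proper $D$-uplets $\bethat=(\hat\eta^1,\ldots,\hat\eta^D)$ with $\sum_c k_c=k$, $\sigma_c\tau_c^{-1}=\eta^c_1\cdots\eta^c_{k_c}$, $\sum_{c,i_c}\lVert\eta^c_{i_c}\rVert=l$, and with $\{\bethat,\bsig,\btau\}$ transitive---exactly conditions (C1)--(C2) together with properness. This is precisely the data entering the construction of $\cS(\bsig,\btau,\bethat)$: Lemma~\ref{Lem:connectivity-big-nodal} converts transitivity (C1) into connectedness, Lemma~\ref{Lem:diff-arithm-genus-and-l} converts the length condition (C2) into fixing the arithmetic genus at $\mathcal G$, and properness ($\eta^c_{i_c}\ne\id$, i.e.\ for each flavor the flavored vertices are not all of valency one) is the stated valency condition. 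Since $\bsig,\btau$ are fixed labeled permutations, the $\Psi$-matching of faces is rigid and $\bethat\mapsto\cS(\bsig,\btau,\bethat)$ is a bijection onto the relevant isomorphism classes, proving the first counting statement.

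For the second family I would reorganize the same data as $\bethat'$ with $\hat{\eta'}^c=(\sigma_c^{-1},\eta^c_1,\ldots,\eta^c_{k_c},\tau_c)$. Because $\sigma_c^{-1}\eta^c_1\cdots\eta^c_{k_c}\tau_c=\sigma_c^{-1}(\sigma_c\tau_c^{-1})\tau_c=\id$ for every $c$, each $\hat{\eta'}^c$ is a bona fide constellation with trivial product, so $\bethat'$ encodes a nodal topological constellation of the kind handled by Thm.~\ref{th:bij-coverings-topoCons} and Cor.~\ref{cor:Gen-Hurwitz-nodal}; the transitivity and arithmetic-genus conditions are the same as before, and the constraint $\sum_{c,i_c}\lVert\eta^c_{i_c}\rVert=l$ is exactly the generalized Riemann--Hurwitz relation \eqref{eq:RiemHurw-nodal} used there. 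The refinement to $p_C$ then follows from Prop.~\ref{def:PC}: replacing the count of $\bethat$ by the count of $D$ sequences of transpositions with weakly monotone maxima that factorize the same $\sigma_c\tau_c^{-1}$ singles out the subfamily in which every $\hat\eta^c$ is a monotone transposition sequence, which is the announced monotonicity/valency restriction on the flavored vertices.

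The step requiring the most care---and the main obstacle---is the bookkeeping of labelings in these bijections. The coefficient $m_C$ is a count of labeled systems with $\bsig,\btau$ pinned, whereas nodal topological constellations are defined up to simultaneous relabeling of $\{1,\ldots,n\}$. I would resolve this by observing that the $\tfrac1{n!}$ normalization of Cor.~\ref{cor:Gen-Hurwitz-nodal} enters only once one additionally sums over the conjugacy classes $C_{\alpha_c},C_{\beta_c}$ and passes to genuine isomorphism classes; with $\bsig,\btau$ fixed the relevant correspondence is the labeled form of Thm.~\ref{th:bij-coverings-topoCons}, so no such factor appears. Verifying that this rigidification is exact---equivalently that fixing $\bsig,\btau$ leaves no nontrivial relabeling freedom compatible with the imposed transitivity---is the point I would check in detail.
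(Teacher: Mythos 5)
Your proposal is correct and follows essentially the same route as the paper: the paper derives the proposition directly from Lemma~\ref{Lem:connectivity-big-nodal} (transitivity $\Leftrightarrow$ connectedness), Lemma~\ref{Lem:diff-arithm-genus-and-l} (which gives $l-\ell=2(\mathcal G-\mathcal G(\bsig,\btau))$ and hence the re-indexing, parity and sign bookkeeping you spell out), and the correspondence of Sec.~\ref{sub:Coverings} for the second counting. Your closing remark on the labeled-versus-unlabeled bookkeeping is a point the paper glosses over, and your resolution (with $\bsig,\btau$ pinned one works with the labeled form of the correspondence, so no $1/n!$ appears) is the right one.
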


\

The arithmetic genus $\mathcal  G(\bsig, \btau, \bethat) $, or equivalently the exponent $l= \sum_{c=1}^D\sum_{i_c=1}^{k_c} \lVert\eta^c_{i_c}\rVert$, can then be expressed as (Sec.~\ref{sub:proof-of-the-theorem}): 
\be
\label{eq:genera+Excess}
\mathcal  G(\bsig, \btau, \bethat) - \mathcal  G(\bsig, \btau) = \frac {l-\ell(\bsig, \btau)} 2  = \sum_{c=1}^D g(\hat \eta^c ) +  L   \big[\Pi(\bsig, \btau) , \{\pi_c\}_c ; \{ \Pi(\nu_c) \}_c \big]\;,
\ee
where $g(\hat \eta^c ) $ is the genus of the $k_c$-constellation $\hat\eta^c$ (the sum of genera of its connected components) and $L   \big[\Pi(\bsig, \btau) , \{\pi_c\}_c ; \{ \Pi(\nu_c) \}_c \big]$ the number of excess edges \eqref{eq:L} of $ G \big[\Pi(\bsig, \btau) , \{\pi_c\}_c ; \{ \Pi(\nu_c) \}_c \big] $:
\be 
\label{eq:Lbis}
L   \big[\Pi(\bsig, \btau) , \{\pi_c\}_c ; \{ \Pi(\nu_c) \}_c \big] =  \sum_c (|\Pi(\nu_c)| - \lvert\pi_c\rvert)-\lvert\Pi(\bsig, \btau)\rvert + 1. 
\ee.

This provides a better understanding on how to characterize and count the contributions to $m_C$ and $p_C$ in Prop.~\ref{prop:Topological-Expansion}: 
counting connected $\cS(\bsig, \btau, \bethat)$ of fixed arithmetic genus $\mathcal G$ amounts to counting those $\cS(\bsig, \btau, \bethat)$ for which the graph  $G\big[\dots \big]$ is connected and has excess $L$ between $0$ and $\mathcal G - \mathcal G(\bsig, \btau)$ while the genera $g(\hat \eta^c)$ sum up to $\mathcal G - \mathcal G(\bsig, \btau) - L$ (the other conditions in Prop.~\ref{prop:Topological-Expansion} must also be satisfied). In the example of Fig.~\ref{fig:big-nodal-surface}, the constellations are planar and $(l-\ell ) /2 = 2 = L $.  For instance, the following gives a prescription for generating all the spaces that contribute at leading order:
\begin{Prop}
The spaces $\cS(\bsig, \btau, \bethat)$ that correspond to the leading term in $N$ \eqref{eq:WC-asym}  for the expansion of the cumulant Weingarten functions $W_C^{(N)}[\bsig, \btau]$ are those of minimal arithmetic genus  $\mathcal  G(\bsig, \btau, \bethat) = \mathcal  G(\bsig, \btau)$, that is, so that  the $\hat \eta^c$ are all planar and so that $ G \big[\Pi(\bsig, \btau) , \{\pi_c\}_c ; \{ \Pi(\nu_c) \}_c \big]$ is a tree. 
\end{Prop}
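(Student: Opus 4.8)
The plan is to read the statement off directly from the non-negative decomposition \eqref{eq:genera+Excess} that was already assembled during the proof of Thm.~\ref{thm:asympt-cum-weing}. First I would recall that, by the definition of $\ell(\bsig, \btau)$ in Thm.~\ref{thm:asympt-cum-weing} and by the asymptotic formula \eqref{eq:WC-asym}, the leading contribution to $W_C^{(N)}[\bsig, \btau]$ is carried entirely by the minimal exponent $l = \ell(\bsig, \btau)$, i.e.\ by $p_C[\bsig, \btau; \ell]$, which (via Prop.~\ref{prop:Topological-Expansion}) counts the isomorphism classes of spaces $\cS(\bsig, \btau, \bethat)$. By Lemma~\ref{Lem:diff-arithm-genus-and-l} the shift $l - \ell(\bsig, \btau)$ equals $2\bigl(\mathcal G(\bsig, \btau, \bethat) - \mathcal G(\bsig, \btau)\bigr)$, so the leading term is precisely the locus where $\mathcal G(\bsig, \btau, \bethat) = \mathcal G(\bsig, \btau)$. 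This identifies ``leading in $N$'' with ``minimal arithmetic genus'' of $\cS(\bsig, \btau, \bethat)$ and is the first half of the claim.

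The second half is to characterize this minimal-genus locus geometrically. I would invoke \eqref{eq:genera+Excess}, which expresses the genus defect as a sum of manifestly non-negative integers:
\[
\mathcal G(\bsig, \btau, \bethat) - \mathcal G(\bsig, \btau) = \sum_{c=1}^D g(\hat \eta^c) + L\big[\Pi(\bsig, \btau), \{\pi_c\}_c; \{\Pi(\nu_c)\}_c\big],
\]
where $g(\hat \eta^c) \ge 0$ is the genus of the constellation $\hat \eta^c$ and $L \ge 0$ is the number of excess edges \eqref{eq:L}. A sum of non-negative integers vanishes if and only if each summand vanishes, so $\mathcal G(\bsig, \btau, \bethat) = \mathcal G(\bsig, \btau)$ holds if and only if simultaneously $g(\hat \eta^c) = 0$ for every $c$ --- that is, each $\hat \eta^c$ is planar --- and $L = 0$.

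Finally I would translate $L = 0$ into the tree condition. The transitivity hypothesis (C1) guarantees, through Lemma~\ref{Lem:connectivity-big-nodal} (equivalently Lemma~\ref{lem:lemcon}), that the graph $G\big[\Pi(\bsig, \btau), \{\pi_c\}_c; \{\Pi(\nu_c)\}_c\big]$ is connected. For a connected graph the number of excess edges is $L = E - V + 1$, so $L = 0$ is equivalent to $E = V - 1$, which for a connected graph characterizes a tree. Combining the two vanishing conditions, the minimal-arithmetic-genus spaces are exactly those for which all $\hat \eta^c$ are planar and $G[\dots]$ is a tree, as asserted.

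I do not expect a genuine obstacle: the proposition is essentially a repackaging of material already established for Thm.~\ref{thm:asympt-cum-weing} and Prop.~\ref{prop:Topological-Expansion}. The one point deserving care is establishing connectedness of $G[\dots]$ \emph{before} invoking the equivalence $L = 0 \Leftrightarrow \text{tree}$; this is precisely where (C1) enters, since a disconnected forest can also have $L = 0$ and omitting this step would leave the implication $L = 0 \Rightarrow \text{tree}$ unjustified.
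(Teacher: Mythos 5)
Your proposal is correct and follows essentially the same route as the paper: the paper itself treats this proposition as an immediate consequence of the decomposition \eqref{eq:genera+Excess} into the non-negative terms $\sum_c g(\hat\eta^c)$ and $L$, combined with the identification $l-\ell(\bsig,\btau) = 2\bigl(\mathcal G(\bsig,\btau,\bethat)-\mathcal G(\bsig,\btau)\bigr)$ from Lemma~\ref{Lem:diff-arithm-genus-and-l} and the connectivity of $G\big[\Pi(\bsig,\btau),\{\pi_c\}_c;\{\Pi(\nu_c)\}_c\big]$ guaranteed by (C1) via Lemma~\ref{lem:lemcon}. Your explicit remark that connectedness must be established before reading $L=0$ as the tree condition is exactly the point the paper handles by defining $L$ as the excess over a spanning tree of a connected graph.
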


From this picture the aim is to keep $\cS(\bsig, \btau)$ fixed and group the contributions of the different constellations $\bethat$ that lead to the same values of  $L$ and the same genera for the connected components of the $\hat \eta^c$. This is achieved in the last subsection.

\subsubsection{A simpler kind of nodal surfaces} 
\label{sec:nodal-pC}

Both for $m_C$ and $p_C$, one fixes $\bsig, \btau$ but sums over the proper $\bethat$ (denoted by $\brho$ or $\bhatmu$) satisfying a number of assumptions. 
In order to understand this geometrically, one may therefore, from  $\cS(\bsig,\btau;\brho )$ introduced in the previous subsection, contract to points the connected components of the constellations $\hat \eta^c$ but keep the nodal topological constellation $\cS(\bsig, \btau)$ (that is go only half-way to build the graph $G\big[\Pi(\bsig, \btau) , \{\pi_c\}_c ; \{ \Pi(\nu_c) \}_c \big]$). The result is a new kind of object $\cS(\bsig, \btau ; \{ \pi_c\}_c)$ (since only the information on the connected components of the constellations is retained, they have been replaced in the argument by the corresponding partitions on $\{1,\ldots, n\}$). It can be obtained directly from  $\cS(\bsig, \btau)$ by adding nodal points between the faces of $\cS(\bsig, \btau)$ corresponding to the blocs of $\{\pi_c\}_c$. Let us introduce this object more formally. 

\ 

We fix  $\bsig, \btau\in \bS_n$ as well as $\mathcal G\ge 0$, and for $c\in \{1,\ldots, D\}$, we let $\pi_c\ge \Pi(\nu_c)$ be a partition of the disjoint cycles of $\nu_c=\sigma_c\tau_c^{-1}$ 
subject to the conditions:
\begin{enumerate}[label=(C'\arabic*)]
\item $\Pi(\bsig, \btau)\vee\pi_1\vee\ldots\vee\pi_D=1_n$,
\item $L   \big[\Pi(\bsig, \btau) , \{\pi_c\}_c ; \{ \Pi(\nu_c) \}_c \big]  = \mathcal G - \mathcal G (\bsig,\btau)$.
\end{enumerate}

For every nodal embedded graph $\Gamma^\bullet \subset X^\bullet$ in the isomorphism class $\cS(\bsig, \btau)$, $\{\pi_c\}_c$ provides a partition of the faces $F_1, \ldots, F_{nk-l}$ of $\Gamma^\bullet \subset X^\bullet$ (the connected components of the complement of the graph $\Gamma^\bullet$ in the nodal surface $ X^\bullet$).  We may see the blocks in this partition as a new kind of node: we choose for each $j\in\{1,\ldots, nk-l\}$ a point $v_j$ in the interior of $F_j$, and we see each block $B_c$ of $\pi_c$ for each $c$ as  a node $P(B_c) = \{v_j \mid F_j \in B_c\}$. 
We then consider the nodal surface $X^\bullet  / \sqcup_c \{P(B_c)\}_{B_c\in\pi_c}$  (together with the nodal graph $\Gamma^\bullet$ embedded on $X^\bullet$).

We then denote by $\cS(\bsig, \btau ; \{ \pi_c\}_c)$ the isomorphism class of such objects, where by isomorphisms we mean the homeomorphisms of nodal surfaces that induce an isomorphism of nodal embedded graph on $\Gamma^\bullet\subset X^\bullet$ and that preserve the incidence between the nodes $P(B_c)$ and the faces $F_j$, in the sense that if a node $P(B_c)$ belongs to the interiors of the faces $F_j\in B_c$, then the image of the node also belongs to  the interior of the images of the faces. We let:
\be 
\mathfrak{S}_{\mathcal G}(\bsig, \btau)=\Bigl\{\cS(\bsig, \btau ; \{ \pi_c\}_c) \textrm{ such that } \{ \pi_c\}_c \textrm{ satisfy (C'1) and (C'2)}\Bigr\},
\ee
and more generally, we call the $\cS(\bsig, \btau ; \{ \pi_c\}_c)$ the \emph{foldings of $\cS(\bsig, \btau)$. }

An example is shown in Fig.~\ref{fig:big-nodal-surface-2}, where now the nodal points in the interiors of the faces are represented by dotted edges that meet at star-vertices  labelled by the blocks of the partitions $\pi_c$.

\begin{figure}[!h]
\centering
\includegraphics[scale=0.7]{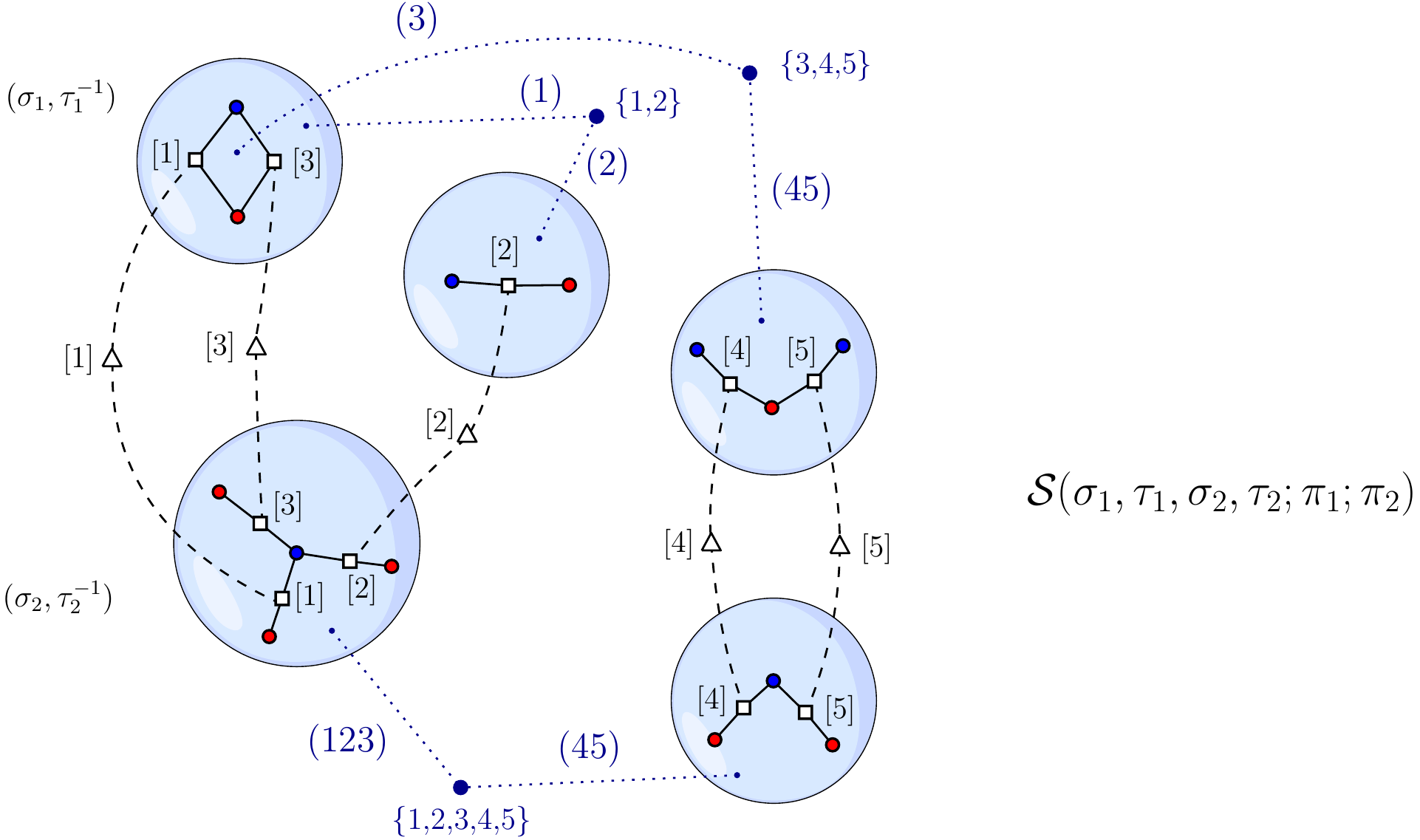} 
\caption{The nodal surface $\cS(\bsig, \btau ; \pi_1,\pi_2)$ for the example of Fig.~\ref{fig:big-nodal-surface}, where $\cS(\bsig, \btau)$ is as in Fig.~\ref{fig:nodal-surface-1} and with $\pi_1=\{1,2,3\}\{4,5\}$ and $\pi_2=\{1,2,3, 4,5\}$.
}
\label{fig:big-nodal-surface-2}
\end{figure}

As for Lemma~\ref{Lem:connectivity-big-nodal}, the following follows directly from the fact that $ G \big[\Pi(\bsig, \btau) , \{\pi_c\}_c ; \{ \Pi(\nu_c) \}_c \big] $ is obtained by contracting the connected components of $\cS(\bsig, \btau)$ to points:

\begin{Lem}
The number of connected components of  $\cS(\bsig,\btau; \{ \pi_c\}_c )$ is $\lvert \Pi(\bsig, \btau)\vee\pi_1\vee\ldots\vee\pi_D \rvert$.
\end{Lem}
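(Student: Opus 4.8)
The statement is the exact analogue of Lemma~\ref{Lem:connectivity-big-nodal}, and the plan is to reduce it to the purely combinatorial Lemma~\ref{lem:lemcon}. Concretely, I would identify the incidence graph governing the connectivity of the nodal surface $\cS(\bsig, \btau ; \{ \pi_c\}_c)$ with the abstract graph $G[\Pi(\bsig, \btau), \{\pi_c\}_c; \{\Pi(\nu_c)\}_c]$, whose number of connected components is already known to be $\lvert \Pi(\bsig, \btau) \vee \pi_1 \vee \ldots \vee \pi_D\rvert$ by Lemma~\ref{lem:lemcon} applied with $\Pi = \Pi(\bsig, \btau)$ and $\Pi_c = \Pi(\nu_c)$ (this is legitimate since $\Pi(\bsig,\btau) \ge \Pi(\nu_c)$ and $\pi_c \ge \Pi(\nu_c)$).

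First I would record the starting datum: by Lemma~\ref{Lem:transitivity-nodal} applied to the $D$-uplet of $2$-constellations $(\sigma_c, \tau_c^{-1})$, the nodal topological constellation $\cS(\bsig,\btau)$ has connected components indexed by the blocks of $\Pi(\bsig, \btau)$, each of which is internally path-connected. The surface $\cS(\bsig,\btau; \{\pi_c\}_c)$ is obtained from $\cS(\bsig,\btau)$ by the additional identifications at the nodes $P(B_c)$, $B_c \in \pi_c$, so a path in $\cS(\bsig,\btau; \{\pi_c\}_c)$ can leave a connected component of $\cS(\bsig,\btau)$ only through one of these new nodes. Consequently the connected components of $\cS(\bsig,\btau; \{\pi_c\}_c)$ are in bijection with those of the bipartite incidence graph $H$ whose vertices are the connected components of $\cS(\bsig,\btau)$ together with the nodes $\{P(B_c)\}_{c, B_c}$, and whose edges record, for each face of $\cS(\bsig,\btau)$, the (unique) connected component carrying it and the (unique) new node into which its chosen interior point is glued.

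The heart of the argument is then to check that $H$ coincides with $G[\Pi(\bsig, \btau), \{\pi_c\}_c; \{\Pi(\nu_c)\}_c]$. Each face of color $c$ of $\cS(\bsig,\btau)$ is a cycle of $\sigma_c\tau_c^{-1} = \nu_c$, i.e. a block $b_c$ of $\Pi(\nu_c)$, matching the $c$-colored edges of $G$. Because $\Pi(\bsig,\btau) \ge \Pi(\nu_c)$, the cycle $b_c$ lies in a single block $B(b_c)$ of $\Pi(\bsig,\btau)$, so the face sits in exactly one connected component of $\cS(\bsig,\btau)$, namely the white vertex $B(b_c)$ of $G$; because $\pi_c \ge \Pi(\nu_c)$, the face belongs to exactly one block $B_c(b_c)$ of $\pi_c$, so its interior point is glued precisely into the $c$-colored vertex $P(B_c(b_c))$ of $G$. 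Thus the edge of $H$ attached to this face links $B(b_c)$ to $B_c(b_c)$, which is exactly the edge that $G$ assigns to $b_c$. Hence $H = G[\Pi(\bsig, \btau), \{\pi_c\}_c; \{\Pi(\nu_c)\}_c]$ and Lemma~\ref{lem:lemcon} gives the claimed count. The only point requiring care is this incidence bookkeeping — the two containments $b_c \subseteq B(b_c)$ and $b_c \subseteq B_c(b_c)$ — together with the (standard) fact that the number of connected components of a nodal surface equals that of the incidence graph of its irreducible components and nodes; everything else is a direct transcription of the proof of Lemma~\ref{Lem:connectivity-big-nodal}.
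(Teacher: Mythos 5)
Your proof is correct and follows the same route as the paper: the paper justifies the lemma precisely by observing that $G\big[\Pi(\bsig,\btau),\{\pi_c\}_c;\{\Pi(\nu_c)\}_c\big]$ is obtained from $\cS(\bsig,\btau;\{\pi_c\}_c)$ by contracting the connected components of $\cS(\bsig,\btau)$ to points, and then invoking Lemma~\ref{lem:lemcon}. You have simply spelled out the incidence bookkeeping (faces $\leftrightarrow$ blocks of $\Pi(\nu_c)$, and the two containments $b_c\subseteq B(b_c)$, $b_c\subseteq B_c(b_c)$) that the paper leaves implicit.
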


However now, in comparison to Lemma~\ref{Lem:diff-arithm-genus-and-l}, the information on the genera of the connected components of the $\hat \eta^c$ has been lost:

\begin{Lem}
The arithmetic genus of a connected folding $\cS(\bsig,\btau; \{ \pi_c\}_c )$ can be expressed as:
\be
\mathcal G (\bsig,\btau; \{ \pi_c\}_c ) =  \mathcal G (\bsig,\btau) + L   \big[\Pi(\bsig, \btau) , \{\pi_c\}_c ; \{ \Pi(\nu_c) \}_c \big] .
\ee
\end{Lem}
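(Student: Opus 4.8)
The plan is to compute the arithmetic genus of $\cS(\bsig, \btau ; \{ \pi_c\}_c)$ directly from the defining formula \eqref{eq:nodal-genus} for a nodal surface, and to compare it term by term with the arithmetic genus $\mathcal G(\bsig, \btau)$ of $\cS(\bsig, \btau)$ recorded in \eqref{eq:nodal-genus2}. The decisive observation is that passing from $\cS(\bsig, \btau)$ to its folding neither changes the irreducible components nor their genera: one only glues together points chosen in the interiors of faces. Hence the geometric genus $\sum_{c} g(\sigma_c, \tau_c^{-1})$ is common to both objects and cancels in the difference, so that $\mathcal G(\bsig, \btau; \{\pi_c\}_c) - \mathcal G(\bsig, \btau)$ reduces to the difference of the two first Betti numbers $\mathcal L$.

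First I would read off the combinatorial data of $\cS(\bsig, \btau; \{\pi_c\}_c)$. Its irreducible components are those of $\cS(\bsig, \btau)$, namely the connected components of the $D$ bipartite maps $(\sigma_c, \tau_c^{-1})$, so their number is $p = \sum_c \lvert \Pi(\sigma_c,\tau_c^{-1})\rvert$. Its nodes are of two kinds: the $n$ original nodes of $\cS(\bsig, \btau)$, each of cardinality $D$; and, for each color $c$ and each block $B_c\in\pi_c$, a new node $P(B_c)$ obtained by identifying one interior point per face of color $c$ belonging to $B_c$. Since the faces of $(\sigma_c,\tau_c^{-1})$ are the cycles of $\nu_c=\sigma_c\tau_c^{-1}$ and $\pi_c\ge\Pi(\nu_c)$, the cardinality of $P(B_c)$ is the number $\lvert\Pi(\nu_{c|B_c})\rvert$ of cycles of $\nu_c$ grouped in $B_c$, and $\sum_{B_c\in\pi_c}\lvert\Pi(\nu_{c|B_c})\rvert = \lvert\Pi(\nu_c)\rvert$.

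Next I would compute $\mathcal L$ for the folding. Because condition (C'1) guarantees that $\cS(\bsig, \btau; \{\pi_c\}_c)$ is connected (Lemma~\ref{Lem:transitivity-nodal}), \eqref{eq:nodal-genus} gives $\mathcal L = \sum_{\text{nodes}}[\mathrm{Card}-1] - p + 1$. The original nodes contribute $n(D-1)$, while the new nodes contribute $\sum_c\sum_{B_c\in\pi_c}(\lvert\Pi(\nu_{c|B_c})\rvert - 1) = \sum_c(\lvert\Pi(\nu_c)\rvert - \lvert\pi_c\rvert)$, whence $\mathcal L(\cS(\bsig, \btau;\{\pi_c\}_c)) = n(D-1) + \sum_c(\lvert\Pi(\nu_c)\rvert - \lvert\pi_c\rvert) - p + 1$. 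For $\cS(\bsig, \btau)$ itself, which has $\lvert\Pi(\bsig,\btau)\rvert$ connected components, the same count in its disconnected form (the $+\lvert\Pi(\bsig,\btau)\rvert$ convention already used in \eqref{eq:nodal-genus2}) gives $\mathcal L(\cS(\bsig,\btau)) = n(D-1) - p + \lvert\Pi(\bsig,\btau)\rvert$. Subtracting, the geometric genus together with the $n(D-1)-p$ terms cancel and I am left with $\sum_c(\lvert\Pi(\nu_c)\rvert - \lvert\pi_c\rvert) - \lvert\Pi(\bsig,\btau)\rvert + 1$, which is exactly $L[\Pi(\bsig,\btau),\{\pi_c\}_c;\{\Pi(\nu_c)\}_c]$ by \eqref{eq:Lbis}.

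The one point requiring care — and the main potential pitfall — is the bookkeeping of connected components. The surface $\cS(\bsig,\btau)$ is in general disconnected, so the genus formula must be applied in the form carrying the correct number-of-components term, matching the convention of \eqref{eq:nodal-genus2}, whereas the folding is connected and uses $+1$. The drop in the number of connected components from $\lvert\Pi(\bsig,\btau)\rvert$ to $1$ produced by the new face-gluing nodes is precisely what converts the naive face count $\sum_c(\lvert\Pi(\nu_c)\rvert - \lvert\pi_c\rvert)$ into $L$; keeping the sign and the additive constant straight here is the crux, while everything else is a routine substitution.
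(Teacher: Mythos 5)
Your proposal is correct and follows essentially the same route as the paper: compute $\mathcal G(\bsig,\btau;\{\pi_c\}_c)$ directly from the definition \eqref{eq:nodal-genus} (same irreducible components and geometric genus as $\cS(\bsig,\btau)$, the $n$ original nodes of cardinality $D$ plus one new node per block $B_c$ of cardinality equal to the number of cycles of $\nu_c$ it contains), then subtract \eqref{eq:nodal-genus2} and identify the remainder with $L$ via \eqref{eq:Lbis}. Your explicit bookkeeping of the connected-component convention is a point the paper leaves implicit, but the argument is the same.
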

\proof From the definition \eqref{eq:nodal-genus}, $\mathcal G(\bsig, \btau; \{ \pi_c\}_c )$ reads:
$$
\mathcal G(\bsig, \btau, \bethat) = \sum_{c=1}^D g(\sigma_c, \tau_c^{-1}) + nD + \sum_{c=1}^D\#(\sigma_c\tau_c^{-1}) - n - \sum_{c=1}^D\left( \lvert \Pi(\sigma_c, \tau_c^{-1}) \rvert + \pi_c\right) + 1. $$ The result follows using \eqref{eq:nodal-genus2} for $\cS(\bsig, \btau)$ as well as \eqref{eq:Lbis}. 

\qed

\

We can therefore translate the conditions (C'1) and (C'2) geometrically:

\begin{Lem}
$\mathfrak{S}_{\mathcal G} (\bsig, \btau)$ is the set of connected foldings of $\cS(\bsig, \btau)$ whose arithmetic genus is $\mathcal G$.
\end{Lem}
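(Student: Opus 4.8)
The statement is a direct consequence of the two lemmas immediately preceding it, and the plan is simply to read off the equivalence by unwinding the definitions. Recall that a \emph{folding} of $\cS(\bsig,\btau)$ is by definition an object $\cS(\bsig,\btau;\{\pi_c\}_c)$ built from an arbitrary choice of partitions $\pi_c\ge\Pi(\nu_c)$ of the disjoint cycles of $\nu_c=\sigma_c\tau_c^{-1}$, while $\mathfrak{S}_{\mathcal G}(\bsig,\btau)$ is the subset of those foldings whose defining partitions satisfy the two conditions (C'1) and (C'2). Thus it suffices to show that, as $\{\pi_c\}_c$ ranges over all admissible choices, conditions (C'1) and (C'2) both hold if and only if the resulting folding is connected and has arithmetic genus $\mathcal G$.

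First I would match (C'1) with connectivity. By the connectivity lemma, the number of connected components of $\cS(\bsig,\btau;\{\pi_c\}_c)$ equals $\lvert\Pi(\bsig,\btau)\vee\pi_1\vee\ldots\vee\pi_D\rvert$; hence the folding is connected precisely when this number is $1$, which is exactly condition (C'1). Next, since the arithmetic genus in \eqref{eq:nodal-genus} is defined only for connected nodal surfaces, I would assume (C'1) and then invoke the arithmetic genus lemma, which gives $\mathcal G(\bsig,\btau;\{\pi_c\}_c)=\mathcal G(\bsig,\btau)+L\big[\Pi(\bsig,\btau),\{\pi_c\}_c;\{\Pi(\nu_c)\}_c\big]$. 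This genus equals the prescribed value $\mathcal G$ if and only if $L\big[\Pi(\bsig,\btau),\{\pi_c\}_c;\{\Pi(\nu_c)\}_c\big]=\mathcal G-\mathcal G(\bsig,\btau)$, which is exactly condition (C'2). The converse direction is identical: any connected folding of arithmetic genus $\mathcal G$ satisfies (C'1) by the connectivity lemma and then (C'2) by the genus formula, so it lies in $\mathfrak{S}_{\mathcal G}(\bsig,\btau)$.

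There is no substantive obstacle here — the lemma is a bookkeeping assembly of the two preceding results — but the one point requiring care is the logical ordering: because the arithmetic genus is only defined for a \emph{connected} nodal surface, condition (C'1) must be invoked before condition (C'2) can even be phrased in geometric terms. The argument therefore proceeds connectivity-first, establishing (C'1) as the criterion for a single connected component, and only then reads off the genus via (C'2).
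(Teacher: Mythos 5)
Your proof is correct and matches the paper exactly: the paper states this lemma without a separate proof, presenting it as the immediate translation of (C'1) and (C'2) via the two preceding lemmas (connectivity count, then the genus formula for connected foldings), which is precisely the assembly you carry out. Your remark about invoking connectivity before the genus condition is the right point of care and is consistent with the paper's hypothesis that the genus formula applies to \emph{connected} foldings.
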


\

We may express both $m_C$ and $p_C$ in terms of connected  foldings of $\cS(\bsig, \btau)$ of bounded arithmetic genus. While in Prop.~\ref{prop:Topological-Expansion}, the $\cS(\bsig,\btau;\brho )$   were counted with a weight one, now the $\cS(\bsig, \btau ; \{ \pi_c\}_c)$ must be counted with a non-trivial weight that takes into account all the different choices of $\bethat$ satisfying the conditions in Prop.~\ref{prop:Topological-Expansion} and that lead to the same $\cS(\bsig, \btau ; \{ \pi_c\}_c)$. 
 For $m_C( \bsig , \btau ; l, k)$, from \eqref{eq:M}, for each color $c$ this weight is the number $M(\pi_c, \nu_c ; l_c, k_c)$  of proper $k_c$-constellations which  respect to the partition $\pi_c$, that is $\Pi(\hat\eta^c) = \pi_c$. However, as already mentioned in Sec.~\ref{sub:proof-of-the-theorem}, this quantity does not factor over the connected components of $\hat\eta^c$, since a permutation $\eta^c_i$ may still be different from the identity but reduce to the identity on a subset of $\{1,\ldots, n\}$ corresponding to a connected component of $\hat\eta^c$ (said graphically, the vertices of flavor $i$ may all be the identity on a connected component of $\hat\eta^c$ without it being the case for all connected components of $\hat\eta^c$). This means that $M(\pi_c, \nu_c ; l_c, k_c)$ cannot be expressed as a product of weights associated to some of the vertices of $\cS(\bsig, \btau ; \{ \pi_c\}_c)$.
 
 \
 
 On the other hand, for $p_C[ \bsig , \btau ; l]$, one has factorization over the blocks of $\pi_c$, leading to the following geometrical interpretation of the formula \eqref{formula-gen-l} of Thm.~\ref{thm:asympt-cum-weing}  in terms of the nodal surfaces $\cS(\bsig, \btau ; \{ \pi_c\}_c)$, with non-trivial combinatorial weights. We let $V^c_\bullet(\cS)$ the set of nodes of a folding $\cS=\cS(\bsig, \btau ; \{ \pi_c\}_c)$ of $\cS(\bsig, \btau)$ that correspond to the blocks of $\pi_c$, called \emph{nodes of color $c$}, and by $V_\bullet(\cS) = \sqcup_c V^c_\bullet(\cS)$ the set of \emph{colored nodes}. For $v\in V^c_\bullet(\cS)$, we also let ${\nu}_{c \mid v}$ be the restriction of $\nu_c=\sigma_c\tau_c^{-1}$ to the block $B_c\in \pi_c$ corresponding to $v$, and $c({\nu}_{c \mid v})$ is its cycle type, which is a partition of $\lvert B_c \rvert$. We recall that for $\alpha=c({\nu}_{c \mid v})$, $\lvert C_\alpha \rvert = \frac{\lvert B_c \rvert !}{ \prod_{p\ge 1 p^{d_p(\alpha)} d_p(\alpha)!}}$, where $d_p(\alpha)$ is the number of parts of $\alpha$ of size $p$. 
 \begin{Prop}
 With these notations, $p_C[ \bsig , \btau ; l]$ can be expressed as a sum of connected foldings of $\cS(\bsig, \btau)$ of bounded arithmetic genus, whose  colored nodes are weighted by monotone single Hurwitz numbers:
 \be
 p_C[ \bsig , \btau ; l] = \sum_{\mathcal G = \mathcal G (\bsig, \btau)} ^ {\frac 1 2 [l-\ell(\bsig, \btau )]} \sum_{\mathcal S \in \mathfrak{S}_{\mathcal G} (\bsig, \btau)} \sum_{\substack{{\{g_v\}_{v\in V_\bullet(\cS)}\ge 0}\\{\sum_{v\in V_\bullet(\cS)} g_v = \frac 1 2 [l-\ell(\bsig, \btau )] +  \mathcal G (\bsig, \btau) - \mathcal G }}} \prod_{v\in V^c_\bullet(\cS)} \frac {\vec H_{g_v}(c({\nu}_{c \mid v}))} {\lvert C_{c({\nu}_{c \mid v})}\rvert}\;.
\ee 
The topological expansion of the cumulant Weingarten functions in Prop.~\ref{prop:Topological-Expansion} can therefore also be re-expressed as a topological expansion over connected foldings of  $\cS(\bsig, \btau)$ of fixed arithmetic genus.
 \end{Prop}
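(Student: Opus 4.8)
The plan is to obtain the statement as a direct rewriting of the general formula \eqref{formula-gen-l} of Theorem~\ref{thm:asympt-cum-weing}, feeding into it the geometric dictionary for foldings and the relation \eqref{eq:Simple-Hurw-gamma} that turns planar-constellation counts $\gamma_l$ into monotone single Hurwitz numbers. No new combinatorics is needed: all the work has been done in the proof of Theorem~\ref{thm:asympt-cum-weing} and in the lemmas identifying $\mathfrak{S}_{\mathcal G}(\bsig,\btau)$ with the connected foldings of $\cS(\bsig,\btau)$ of arithmetic genus $\mathcal G$. I would carry this out in four short steps.

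First I would reindex the outer sum of \eqref{formula-gen-l}. There the sum runs over the excess $L\in\{0,\ldots,\tfrac{l-\ell}2\}$ and over partitions $\{\pi_c\}_c$ with $\pi_c\ge\Pi(\nu_c)$, $\Pi(\bsig,\btau)\vee\pi_1\vee\cdots\vee\pi_D=1_n$, and $\sum_c(|\Pi(\nu_c)|-|\pi_c|)=|\Pi(\bsig,\btau)|+L-1$. These are exactly the conditions (C'1) and (C'2), so by the preceding lemma each admissible $\{\pi_c\}_c$ is the combinatorial datum of a folding $\cS=\cS(\bsig,\btau;\{\pi_c\}_c)\in\mathfrak{S}_{\mathcal G}(\bsig,\btau)$ with $\mathcal G=\mathcal G(\bsig,\btau)+L$, and conversely. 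Substituting $\mathcal G$ for $L$ converts the sum over $L$ and over $\{\pi_c\}_c$ into $\sum_{\mathcal G}\sum_{\cS\in\mathfrak{S}_{\mathcal G}(\bsig,\btau)}$, with $\mathcal G$ ranging from $\mathcal G(\bsig,\btau)$ to $\mathcal G(\bsig,\btau)+\tfrac{l-\ell}2$. Second, at fixed $\cS$ the two nested genus sums of \eqref{formula-gen-l}—over $(g_1,\dots,g_D)$ with $\sum_c g_c=\tfrac{l-\ell}2-L$ and, inside each color, over $\{g_{B_c}\}$ with $\sum_{B_c}g_{B_c}=g_c$—collapse into a single sum attaching a genus $g_v\ge0$ to every colored node $v\in V_\bullet(\cS)$ (the node of the block $B_c$), subject to $\sum_{v\in V_\bullet(\cS)}g_v=\tfrac{l-\ell}2-L=\tfrac12[l-\ell]+\mathcal G(\bsig,\btau)-\mathcal G$, which is precisely the constraint in the claim; the product over blocks becomes $\prod_{v\in V_\bullet(\cS)}\gamma_{l(g_v)}(\nu_{c\mid v})$ with $l(g_v)=|B_c|+|\Pi(\nu_{c\mid v})|+2g_v-2$.

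Third I would replace each factor using \eqref{eq:Simple-Hurw-gamma} applied to the block $B_c$ (ground set of size $|B_c|$, face permutation $\nu_{c\mid v}$, genus $g_v$), giving $\gamma_{l(g_v)}(\nu_{c\mid v})=(-1)^{\#(\nu_{c\mid v})+|B_c|}\,\vec H_{g_v}(c(\nu_{c\mid v}))/|C_{c(\nu_{c\mid v})}|$; this is legitimate since $\pi_c\ge\Pi(\nu_c)$ makes $\nu_{c\mid v}$ well-defined and $l(g_v)$ is exactly the Riemann--Hurwitz length. Fourth comes the sign bookkeeping: the product of the node signs is $(-1)^{\sum_c\sum_{B_c\in\pi_c}(\#(\nu_{c\mid B_c})+|B_c|)}$, and because the cycles of $\nu_c$ are distributed among the blocks one has $\sum_{B_c}\#(\nu_{c\mid B_c})=\#(\nu_c)$ and $\sum_{B_c}|B_c|=n$, so the exponent equals $\sum_c\#(\nu_c)+nD\equiv\sum_c\lVert\nu_c\rVert\pmod 2$, the \emph{same} for every folding and every genus distribution. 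By \eqref{eq:truc} one has $l\equiv\sum_c\lVert\nu_c\rVert\pmod 2$, hence the global factor is $(-1)^l$, which cancels the $(-1)^l$ in front of $p_C$ in \eqref{formula-gen-l} and produces the stated formula for $p_C$ itself.

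The main obstacle is exactly this fourth step combined with the bijection asserted in the first. One must verify that the per-node signs aggregate to a \emph{constant} global sign over the entire index set so that it factors out and cancels; this rests on the two elementary identities $\sum_{B_c}\#(\nu_{c\mid B_c})=\#(\nu_c)$ and $\sum_{B_c}|B_c|=n$, valid for every admissible $\{\pi_c\}_c$. A secondary point of care is to confirm that distinct labelled data $\{\pi_c\}_c$ genuinely index the isomorphism classes $\cS\in\mathfrak{S}_{\mathcal G}(\bsig,\btau)$ and that the weight, which depends only on the cycle types $c(\nu_{c\mid v})$, is correctly attached to each colored node; this is ensured by the geometric dictionary, as $\bsig,\btau$ are held fixed and the faces of $\cS(\bsig,\btau)$ are labelled by the cycles of the $\nu_c$. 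Everything else is the reorganisation already performed in the proof of Theorem~\ref{thm:asympt-cum-weing}.
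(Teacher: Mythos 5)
Your proposal is correct and follows essentially the same route as the paper: the proposition is obtained by reindexing \eqref{formula-gen-l} via the dictionary between admissible $\{\pi_c\}_c$ and connected foldings of fixed arithmetic genus, merging the nested genus sums into a single sum over colored nodes, and substituting \eqref{eq:Simple-Hurw-gamma}, with the per-block signs aggregating to the constant $(-1)^l$ that cancels the prefactor. Your explicit verification of that sign cancellation (via $\sum_{B_c}\#(\nu_{c\mid B_c})=\#(\nu_c)$, $\sum_{B_c}|B_c|=n$ and $l\equiv\sum_c\lVert\nu_c\rVert\pmod 2$) is exactly the point the paper only asserts in passing, so the write-up is sound.
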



With this interpretation, generating all the contributions to  $ p_C[ \bsig , \btau ; l]$ is quite simple: fixing $\bsig, \btau$ and $l$, one sums over the excess $L$ between 0 and 
 $(l-\ell)/2$, and over all possible ways to add nodes of color $c$ (represented by star-vertices of color $c$) for every color $c\in\{1,\ldots, D\}$ between all the faces of $\cS(\bsig, \btau)$, so that the resulting (class of) nodal surface is connected and the graph obtained when contracting the connected components of $\cS(\bsig, \btau)$ to points has $L$ excess-edges. This generates all the  foldings of $\cS(\bsig, \btau)$ of arithmetic genus $\mathcal G(\bsig, \btau) + L$.  
 For each such folding $\cS=\cS(\bsig, \btau ; \{ \pi_c\}_c)$, each node of color $c$ corresponds to a block $B_c$ of $\pi_c$. One then distributes the total genus $\tfrac {l-\ell } 2  -  L  $ (see \eqref{eq:genera+Excess}) among all the nodes of color $c$, and each such node is endowed with a weight $\gamma_{l(g_{B_c}) }(\nu_{c|B_c})$, that precisely takes into account the contributions of all the connected constellations of genus $g_{B_c}$ corresponding to the connected component of $\hat \eta^c$ for every $\cS(\bsig,\btau,\bethat )$ that contracts to $\cS$. 
 From \eqref{eq:Simple-Hurw-gamma} this factor $\gamma_{l(g_{B_c}) }(\nu_{c|B_c})$ is proportional to a genus-$g_{B_c}$ single monotone Hurwitz number (the signs combine in the overall $(-1)^l$ factor in \eqref{formula-gen-l}).
 
 The simplified version of this geometrical picture corresponding to $D=1$ and minimal $l=\ell$ has been introduced in \cite{Zub-ZJ}.

\section*{Acknowledgements}
B.C. was supported by JSPS KAKENHI 17K18734
and 17H04823. 
R.G. is supported by the European Research Council (ERC) under the European Union’s Horizon 2020 research and innovation program (grant agreement No818066) and by Deutsche Forschungsgemeinschaft (DFG, German Research Foundation) under Germany's Excellence Strategy  EXC-2181/1 - 390900948 (the Heidelberg STRUCTURES Cluster of Excellence).
L.L.~acknowledges support of the START-UP 2018 programme with project number 740.018.017, which is financed by the Dutch Research Council (NWO). This project has also received funding from the European Research Council (ERC) under the European Union’s Horizon 2020 research and innovation programme (grant agreement No. ERC-2016-STG 716083, ”CombiTop”). L.L.~thanks JSPS and Kyoto University, where the discussions at the origin of this project took place.
L.L.~thanks G.~Chapuy for useful references on constellations.
The authors would like to thank Jonathan Novak for insightful comments on a preliminary version.

\end{document}